\newtheorem{theorem}{Theorem}[section]
\newtheorem{lemma}[theorem]{Lemma}
\newtheorem{corollary}[theorem]{Corollary}
\newtheorem{proposition}[theorem]{Proposition}
\newtheorem{definition}[theorem]{Definition}
\newenvironment{proof}{{\par\addvspace{0.1cm}\noindent \bf Proof. }}{\hfill$\Box$\par\medskip} 
\newtheorem{conjecture}[theorem]{Conjecture}
\newtheorem{remark}[theorem]{Remark}
\numberwithin{equation}{section}
\def\a{\alpha}
\def\e{\varepsilon}
\def\R{\Re\mathfrak{e} \,}
\def\I{\Im\mathfrak{m} \,}
\def\vect#1{\mbox{\boldmath $#1$}}  
\def\RR{\mathbb{R}}
\def\CC{\mathbb{C}}
\def\NN{\mathbb N}
\def\de{\delta}
\def\ka{\kappa}
\def\la{\lambda}
\def\si{\sigma}
\def\Ga{\Gamma}
\def\uon#1{\nu_{#1}}
\def\Rsub#1{\mbox{\rm R}_{#1}}
\def\Rd#1#2{\mbox{\rm R}_{#2}\,(#1)}
\def\Res{\mbox{\rm R}}
\def\dom{\Omega}
\def\pO{\partial\Omega}
\def\O{\Omega}
\def\ip#1#2{\langle #1, #2 \rangle}
\def\pd#1#2{\frac{\partial #1}{\partial #2}}
\def\resat#1{\underset{#1}{\mbox{\rm Res}}\,\,}
\def\eao{\stackrel{\rm{at}\> 0}=}
\def\bt{\mbox{\large $\cdot$}}
\def\Sc{{\rm Sc}} 
\def\Rm{\mbox{\rm Rm}} 
\def\dx{dv(x)}
\def\dy{dv(y)}
\def\Pf{\mbox{\rm Pf.}}
\def\M{M\"obius }
\def\jb#1{\textcolor[named]{Black}{{#1}}}
\begin{document}

\title{Residues of manifolds}

\author{Jun O'Hara\footnote{Supported by JSPS KAKENHI Grant Number 19K03462.}}
\maketitle

\begin{abstract}
The Riesz $z$-energy of a manifold $X$ is the integration of the distance between two points to the power $z$ over the product space $X\times X$. 
Considered as a function of a complex variable $z$, it can be generalized to a meromorphic function by analytic continuation, which we will call the meromorphic energy function of $X$. 
It has only simple poles at some negative integers. 
The residues of a manifold $X$ are the residues of the meromorphic energy function. 
For example, the volume and the Willmore energy for surfaces in $\RR^3$ can be obtained as residues. 
 
In this paper we first show the \M invariance of the residue at $z=-2\dim X$ of a closed submanifold or a compact body in a Euclidean space. 
We introduce the relative residues for compact bodies and weighted residues, and show that the scalar curvature and the mean curvature as well as the Euler characteristic of compact bodies of dimension less than $4$ can be expressed in terms of residues and local residues. 
We study the order of differentiaion of a local defining function of $X$ that is necessary to obtain the (global) residues when $X$ is a closed submanifold of a Euclidean space. 
We also show the inclusion-exclusion principle. 

Residues appear to be similar to quantities obtained by asymptotic expansion such as intrinsic volumes (Lipschitz-Killing curvatures), spectra of Laplacian, and the Graham-Witten energy. 
We show that residues are independent from them. 
Finally we introduce a \M invariant principal curvature energy for $4$-dimensional hypersurfaces in $\RR^5$, and express the Graham-Witten energy in terms of the residues, Weyl tensor, and this \M invariant principal curvature energy. 
\end{abstract}

\medskip{\small {\it Keywords:} Willmore energy, Graham-Witten energy, residue, M\"obius invariance, M\"obius energy, Lipschitz-Killing curvature, intrinsic volume, spectrum of Laplacian} 

{\small 2010 {\it Mathematics Subject Classification:} 53C65, 58A99, 52A38, 53A07.}

\setcounter{tocdepth}{3}
\tableofcontents

\section{Introduction}
%
Let $X$ be a Riemannian manifold, $d(x,y)$ be the distance between two points $x$ and $y$ in $X$, and $dv$ be the volume element of $X$. If we consider 
\begin{equation}\label{IXz}
\iint_{X\times X}{d(x,y)}^z\,dv(x)dv(y),
\end{equation}
as a function of a complex variable $z$, it becomes holomorphic if $\R z$ is sufficiently large ($\R z>-\dim X$). 
If we extend the domain to the whole complex plane by analytic continuation, we obtain a meromorphic function with only simple poles at some negative integers. 
We call it the {\em meromorphic energy function} or {\em Brylinski beta function} or {\em regularized Riesz $z$-energy} of $X$ and denoet it by $B_X(z)$. We call the residues of it at a pole $z_0$ the {\em residues} of the manifold $X$ and denote it by $\Res_X(z_0)$. 
The subject of this paper is the information of manifolds obtained from the meromorphic energy functions, in particular their residues.  
In this paper we mainly deal with closed submanifolds of Euclidean space or {\em compact bodies} which are compact submanifolds of Euclidean space with the same dimension as the ambient space. 
In this case, we use the distance $|x-y|$ in the ambient space as $d(x,y)$. 
On the other hand, if $X$ is a compact connected Riemannian manifold we use the geodesic distance.

The source of this study is the energy of a knot $K$ given by 
\[
\iint_{(K\times K)\setminus\Delta_\e}\frac1{{|x-y|}^2}\,dxdy =
\frac{2L(K)}\e+E(K)+O(\e)
\]
(\cite{O1}), where $\Delta_\e=\{(x,y)\in\RR^3\times\RR^3\,:\,|x-y|\le\e\}$ $(\e>0)$ and $L(K)$ is the length of the knot. 
This is the Hadamard regularization of the divergent integral $\iint_{K\times K}|x-y|^{-2}dxdy$. 
That is, first we perform the integration on the complement of an $\e$-neighbourhood of the diagonal set where the integration diverges, next extend the result in a series in $\e$, and finally take its constant term called Hadamard's finite part of the integral.

The energy of a knot is motivated by Fukuhara and Sakuma's problem of finding a functional called energy on the space of knots that can produce a canonical shape of knot that minimizes the energy in each knot type.
Freedman, He, Wang showed that the energy $E(K)$ is invariant under \M transformations of $\RR^3\cup\{\infty\}$ as far as the image of the knot is compact\footnote{This is why $E(K)+4$ is sometimes called the {\em \M energy}.}, and using the \M invariance they showed that there is an energy minimizer in each {\sl prime} knot type (\cite{FHW}). 
This is how the \M invariance got involved in the study of knot energies. 
These were the starting points of geometric knot theory, where various energy functionals on the space of knots, and its higher dimensional analogues have been studied. 

On the other hand, Brylinski proposed another regularization of the divergent integral $\iint_{K\times K}|x-y|^{-2}dxdy$ using the analytic continuation (\cite{B}). 
Let $z$ be a complex variable, and consider 
\[
\iint_{K\times K}{|x-y|}^z\,dxdy. 
\]
It is well-defined on $\R z>-1$, where it becomes a holomorphic function of $z$. 
A meromorphic function can be obtained by extending the domain to the whole complex plane by means of analytic continuation. It has only simple poles at negative odd integers. 
Brylinski called it the {\em beta function of knot} since it is represented by Euler's beta function when the knot is a round circle. 
Let it be denoted by $B_K(z)$. Then the energy $E(K)$ is equal to $B_K(-2)$.
Fuller and Vemuri extended this to closed submanifolds $M$ of Euclidean space. In this case, $B_M(z)$ has possible simple poles at $z=-m-2j$ where $m=\dim M$ and $j=0,1,2, \dots$ (\cite{FV}).

As was noticed in \cite{OS2}, Hadamard regularization and regularization via analytic continuation yield essentially the same thing. 
Specifically, $B_X(z)$ has a pole at $z_0$ if and only if the $\log$ term appears in Hadamard regularization of $\iint_{X\times X}|x-y|^{z_0}\,dv(x)dv(y)$, and the coefficient of which coincides with the residue of $B_X$ at $z_0$. 

Brylinski's beta function was extended to compact bodies $\dom$ in \cite{OS2}. 
In this case $B_\dom(z)$ has possible simple poles at $z=-n,-n-1-2j$ $(n=\dim \dom, j=0,1,2,\dots)$. 
Applying Stokes's Theorem, $B_\dom(z)$ can be expressed as a double integral on the boundary with the integrand being the product of $|x-y|^{z+2}$ and the inner product $\ip{\nu_x}{\nu_y}$ of the outward unit normal vectors. 
Thus $B_\dom$ can be considered as a meromorphic energy function of $M=\pO$ with {\em weight} $\ip{\nu_x}{\nu_y}$.

The {\em $z$-energy} of $X$, denoted by $E_X(z)$, is defined to be Hadamard's finite part of $\iint_{X\times X}|x-y|^z\,dv(x)dv(y)$. 
It is equal to $B_X(z)$ if $z$ is not a pole of $B_X$ and to 
\[
\lim_{w\to z}\biggl(B_X(w)-\frac{{\rm Res}(B_X;z)}{w-z}\biggr)
\]
if $z$ is a pole. 
Both the energy and the residue can be expressed by the double integral on $M$ or $\pO $. 
The integrand is determined by the global data of $M$ or $\pO$ in the case of energies, and by local data such as curvatures, the second fundamental form and the derivatives in the case of residues. 

The \M invariance of the energy of knots $E(K)=B_K(-2)$ was generalized in \cite{OS2} and \cite{OS2'} to the \M invariance of the $(-2\dim X)$-energy of an odd dimensional closed submanifold of $\RR^n$ or an even dimensional compact body. 
The condition of the dimension was put so that $B_X(z)$ does not have a pole at $z=-2\dim X$.  

\medskip
In this paper we will mainly study the residues. 
These are what were missing from previous studies of knots and surface energies. 
We study the properties of residues and the relation with geometric quantities obtained by asymptotic expansion which have been studied in integral geometry and differential geometry. 

First we show that the \M invariance also holds for the residues at $z=-2\dim X$ by modifying the proofs in \cite{OS2,OS2'}. 
As a consequence, 
for an $m$ dimensional closed submanifold $M$, the Hadamard regularization of $\iint_{M\times M}|x-y|^{-2m}dv(x)dv(y)$ is given by 
\begin{equation}\label{eq_symmetry_intro}
\begin{array}{rcl}
\displaystyle \iint_{(M\times M)\setminus\Delta_\e}\frac1{{|x-y|}^{2m}}\,\dx \dy 
&=&\displaystyle \sum_{l=0}^{\lceil m/2 \rceil-1}a_{l}(M)\e^{-m+2l}  \\[5mm]
&&\displaystyle 
+\left\{
\begin{array}{ll}
{E}^{o}_M(-2m)+O(\e) & \hspace{0.4cm}\mbox{$m$ : odd}, \\[2mm]
\Res_M(-2m)\log\e+E_M^{e}(-2m)+O(\e) & \hspace{0.4cm}\mbox{$m$ : even},
\end{array}
\right.  
\end{array}
\end{equation}
and the \M invariance holds for the constant term ${E}^{o}_M$ if $m$ is odd, i.e., when the $\log$ term does not appear, although when the $\log$ term does appear, which happens if $m$ is even, the \M invariance does not hold for the constant term $E_M^{e}$ but for the coefficient of the $\log$ term $\Res_M$ (Theorem \ref{M-inv-res-closed}). 
The same thing happens for compact bodies, reversing even-odd (Theorem \ref{thm_M_inv_body}). 
Thus, the same phenomenon as in the studies of quantum field theory and theory of renormalized volumes is observed: the anomalies have the symmetry they break (cf. p7 of \cite{A}). 

We introduce two kinds of new residues. 
First, as was stated before, $B_\dom$ can be considered as a meromorphic energy function of $M=\pO$ with weight $\ip{\nu_x}{\nu_y}$. 
We can generalize it to a closed submanifold $M$ with arbitrary codimension by 
\jb{generalizing $\ip{\nu_x}{\nu_y}$ to $\ip{T_xM}{T_yM}$ using the Grassmann algebra (Definition \ref{nu-weighted}). }
We denote it by $B_{M,\nu}(z)$ and call it the {\em $\nu$-weighted meromorphic energy function.} 
$B_{M,\nu}(z)$ has only simple poles. 
The residues are called the {\em $\nu$-weighted residue} of $M$ and are denoted by $\Res_{M,\nu}(z_0)$.

The second is the {\em relative} version for compact bodies (Subsection \ref{subsect_relative}). The {\em meromorphic relative energy function} of $\Omega$, denoted by $B_{\dom,\pO}(z)$, can be defined by regularizing 
\[
\int_\dom\int_{\pO}|x-y|^z\,\dx\dy
\]
via analytic continuation in the same way. 
It has only simple poles at some negative integers. 
Let the residues of it be called the {\em relative residues}, denoted by $\Res_{\dom,\pO}(z_0)$. 
We show that the relative residues can be obtained by half the derivatives of the residues of the $\e$-parellel body of $\dom$ by $\e$ at $\e=0$, where the $\e$-parellel body is a body obtained by inflating $\dom$ outward by $\e$ (Subsection \ref{subsubsection_parallel_bodies}). 

The residues are obtained by integrating the local residues. 
The local residues can be expressed using the derivatives of a function defined on the tangent space such that $M$ or $\pO$ are locally expressed as a graph of it. 
A priori, necessary order of differentiation to express local residue at $z_0+2$ is greater than that at $z_0$ by $2$, although we conjecture that after integrating on a closed submanifold $M$ or $\pO$ to obtain the ``global residue'' the difference is equal to $1$. 
We prove it for some cases (Subsection \ref{subsub_order_diff}). 

As the first few examples of residues we obtain the volumes of $M$, $\dom$, and $\pO$, the total mean curvature, the Willmore energy 
if $M$ is a closed surface in $\RR^3$, 
the integral of the linear combination of the squared scalar curvature and the square norms of the Riemann and the Ricci curvature tensors if $M$ is a Riemannian manifold, and so on. 
They are same or similar to the first few examples of some known geometric quantities such as Lipschitz-Killing curvatures (or intrinsic volumes for convex bodies), the coefficients of Weyl's tube formula, spectra of the Laplacian, and Graham-Witten energy (Section \ref{sect_similar_q}). We will study the relationships and independence of these quantities. 

First, we show that $B_\dom(z)$ and $B_{\pO}(z)$ are independent as functionals from the space of compact bodies to the space of meromorphic functions (Subsection \ref{subsubsection_indep_B_dom_B_pO}). 

As for the curvatures, we show that the scalar curvature and the squared norm of the mean curvature vector can be expressed by a linear combination of the local residue and the $\nu$-weighted local residue (Subsection \ref{subsect_scalar_curv_residues}). 
Furthermore, if $M$ is of codimension $1$ then the mean curvature and the Laplacian of it can be expressed by the local relative residues (Subsection \ref{subsect_relative_mean_curv}). 

Let $\dom$ be a a convex body. 
The {\em intrinsic volumes} of $\dom$ are coefficients that appear in the series expansion in $\e$ of the volume of the $\e$-parallel body $\dom_\e$. 
They are essentially equal to the integral of the elementary symmetric polynomials of the principal curvatures of the boundary, which are called the {\em Lipschitz-Killing curvatures} in differential geometry. 
The latter formulation does not require that $\dom$ be convex. 
By Gauss-Bonnet-Chern theorem, the Lipschitz-Killing curvatures include the Euler characteristic. 
Hadwiger's theorem (Theorem \ref{Hadwiger_thm}) states that the intrinsic volumes form the basis of the space of valuations that satisfy some conditions containing the continuity with respect to the Hausdorff metric, where a valuation is a function on the space of convex bodies that satisfy $\mu(\emptyset)=0$ and the inclusion-exclusion principle: $\mu(K\cup L)=\mu(K)+\mu(L)-\mu(K\cap L)$ if $K,L$ and $K\cup L$ are convex. 
Thus the intrinsic volumes are important quantities in the theory of valuation that has been rapidly developing in recent years (cf. \cite{AF}). 

We show that the residues and the relative residues of compact bodies satisfy the inclusion-exclusion principle as far as $\dom_1, \dom_2, \dom_1\cap\dom_2$ and $\dom_1\cup\dom_2$ all satisfy some regularity condition (Subsection \ref{subsect_inclusion_exclusion}). We do not need the assumption of convexity here. 
The linear combination of the residues of $\dom$ and of $\pO$, and the relative residues of $\dom$ can express all the Lipschitz-Killing curvatures of $\dom$ when the dimension is $2$ or $3$. However, we conjecture that this is not possible as the dimension increases (Subsection \ref{subsubsection_Lipschitz-Killing_curv_intrinsic_volum}).

Graham-Witten energy, which we denote by $\mathcal{E}$ in this paper, was introduced in 1999 via the volume renormalization in the study the AdS/CFT correspondence. 
Here the renormalization was carried out by the Hadamard regularization. 
As in the case of energy and residue, 
if the $\log$ term does not appear in the series expansion then the constant term is conformally invariant, whereas if the $\log$ term does appear then the coefficient of it instead of the constant term is conformally invariant. 
The Graham-Witten energy $\mathcal{E}$ is a coefficient of this $\log$ term.
Therefore it is a conformal invariant for an even-dimensional closed submanifold. 
In addition, if the dimension is $2$, it coincides with the Willmore energy. 
These two properties are what some residues also have. 

We first assume that $M$ is a four dimensional closed submanifold of Euclidean space, and show that the Graham-Witten energy $\mathcal{E}(M)$, the residue $\Res_M(-8)$, and $\nu$-weighted residue $\Res_{M,\nu}(-8)$ are independent. 
Next, we give an ``optimal'' approximation of $\mathcal{E}(M)$ by a linear combination of $\Res_M(-8)$ and $\Res_{M,\nu}(-8)$, from a viewpoint of the order of differentiation of a function such that $M$ can be locally expressed as a graph of it (Subsection \ref{subsec_Graham-Witten}). 

Next we assume further that $M$ is a hypersurface. 
We consider the integral of fourth-order symmetric polynomials of the principal curvature $\ka_1,\dots,\ka_4$. By Gauss-Bonnet-Chern theorem, the integral of $\ka_1\ka_2\ka_3\ka_4$ is equal to the Euler characteristic of $M$ up to multiplication by a constant, which is a topological invariant. It is known that the integral of the square norm of the Weyl tensor is also conformal invariant. 
We show that there is only one more symmetric fourth-order polynomial of the principal curvatures so that the integral, which we denote by $Z(M)$, is \M invariant (Subsection \ref{appendix}). We express the Graham-Witten energy as a linear combination of $\Res_M(-8)$, $\Res_{M,\nu}(-8)$, the integral of the square norm of Weyl tensor, and $Z(M)$ (Subsection \ref{subsec_Graham-Witten}).

In comparison of some geometric quantities arizing from asymptotic expansion with the residues, we have so far observed the same phenomenon; the first few agree with the residues, but if we go further, they are both linear combination of the same terms, but with different coefficients.

\medskip
We use the Einstein convention, namely the summation is taken for all the indices that appear in pairs in each term, when there is no fear of confusion. 
We assume that manifolds are smooth in this paper for the sake of simplicity. 

\medskip
{\bf Acknowledgement}: 
The author thanks Professor Hiroshi Iriyeh for the question of the conformal invariance of the residues, Professor Kazuki Hiroe for helpful suggestions on complex analysis, and Professor Yuya Takeuchi for the explanation of Graham-Witten invariant. 
He also thanks Professors Kazuo Akutagawa and Takashi Kurose for helpful comments. 

\section{Preliminaries}

\subsection{Regularization of divergent integrals}\label{subsection_preliminaries_analysis}
We introduce some of the basics from previous studies mainly from \cite{B,FV,OS2}. 

First we introduce two kinds of regularization of the integral 
\[
I_\varphi(z)=\int_0^\infty t^z\varphi(t)\,dt \quad (z\in\CC),
\]
where $\varphi\colon\RR\to\RR$ is an integrable function with compact support which is smooth on $[0,\de]$ for some $\de>0$. 
Let $k$ be a natural number. Then, if $\R z>-1$ we have
\begin{equation}\label{Iphiz}
I_\varphi(z)=\int_0^{\de} t^{z}\left(\varphi(t)
-\sum_{j=0}^{k-1}\frac{{\varphi}^{(j)}(0)}{j!} t^{j}\right)dt
+\sum_{j=0}^{k-1}\frac{{\varphi}^{(j)}(0)\,\de^{\,z+j+1}}{j!\,(z+j+1)} 
+\int_\de^\infty t^z\varphi(t)\,dt.
\end{equation}
Note that the last term is an entire 
function of $z$. 
Since the integrand of the first term of the right hand side of \eqref{Iphiz} can be estimated by $t^{z+k}$, it is integrable on $[0,\delta]$ when $\R z>-k-1$. 
Therefore $I_\varphi(z)$ can be extended by analytic continuation to a meromorphic function defined on $\R z>-k-1$ with possible simple poles at $z=-1, \dots , -k$. 
As $k$ is arbitrary, it follows that $I_\varphi(z)$ can be extended to a meromorphic function on the whole complex plane $\CC$ with possible simple poles at negative integers with residue 
\begin{equation}\label{residue_I_phi}
\Res(I_\varphi;-\ell)=\frac{{\varphi}^{(\ell-1)}(0)}{(\ell-1)!}  \quad (\ell\in\NN).
\end{equation}

We remark that the residues appear in Hadamard regularization as coefficients of the series expansion: 
\[
\int_\e^\infty t^{-\ell}\varphi(t)\,dt
=\sum_{j=1}^{\ell-1}\frac1{\ell-j}\cdot\frac{{\varphi}^{(j-1)}(0)}{(j-1)!}\,\e^{\,-\ell+j}
-\frac{{\varphi}^{(\ell-1)}(0)}{(\ell-1)!}\log\e +C +O(\e)
\qquad (\ell\in\NN).  
\]
$I_\varphi(z)$ has a pole at $z=-k$ if and only if the extension of $\int_\e^\infty t^{-k}\varphi(t)\,dt$ in a series in $\e$ has a $\log$ term, and the coefficient of the $\log$ term is equal to minus the residue of $I_\varphi(z)$ at $z=-k$. 
The constant $C$ is called Hadamard's finite part, denoted by $\Pf \int_0^\infty t^{-k}\varphi(t)\,dt$, which satisfies
\[
\Pf \int_0^\infty t^{-k}\varphi(t)\,dt=\lim_{z\to-k}\left(I_\varphi(z)-\frac{\mbox{\rm Res}(I_\varphi;-k)}{z+k}\right).
\]
Thus we obtain the same quantities from two kinds of regularization above mentioned. 
The reader is referred to, for example, \cite{GS} Ch.1, 3.2 for the details. 
In this paper we use the regularization via analytic continuation instead of Hadamard regularization, which has been used in the study of energy of knots and surfaces (cf. \cite{O1},\cite{AS}). 

In the following two subsections we introduce two methods to carry out the regularization. 
We remark that when the dimension is equal to $1$, i.e., in the case of knots, one can use expression in terms of the arc-length as Brylinski did in \cite{B}. 

\subsection{Regularization using small extrinsic or geodesic balls}\label{subsubsect_extrinsic_balls}
%
Let $M$ be an $m$ dimensional closed manifold, either a submanifold of $\RR^n$ or a connected Riemannian manifold. 
We use the extrinsic distance $|x-y|$ in the former case, where $|\,\cdot\,|$ is the standard norm of $\RR^n$, and the geodesic distance in the latter case. 

Let $\lambda$ be a smooth real-valued function on $M\times M$, and $x$ be a point in $M$. 
We are interested in 
\begin{eqnarray}
I_{M,\lambda,x}(z)&=&\displaystyle \int_M{d(x,y)}^z\,\lambda(x,y)\,\dy , \label{IMrhox} \\ 
I_{M,\lambda}(z)&=&\displaystyle \iint_{M\times M}{d(x,y)}^z\,\lambda(x,y)\,\dx \dy 
=\int_M I_{M,\lambda,x}(z)\,\dx . \label{IMrho}
\end{eqnarray}
The integrals converge if $z$ satisfies $\R z>-m$, in which case $I_{M,\lambda,x}(z)$ and $I_{M,\lambda}(z)$ are holomorphic functions of $z$. 

Put 
\[
\begin{array}{rcl}
B_t(x)&=&\{y\in M\,:\,d(x,y)\le t\}, \\[2mm]
\Delta_t(M\times M)&=&\{(x,y)\in M\times M\,:\,d(x,y)\le t\}
\end{array}
\]
for $t\ge0$ and put 
\begin{eqnarray}
\psi_{\lambda,x}(t)&=&\displaystyle \int_{B_t(x)} \lambda(x,y)\,\dy  \qquad (x\in M), \label{psiMrhox} \\[1mm]
\psi_{\lambda}(t)&=&\displaystyle \iint_{\Delta_t(M\times M)} \lambda(x,y)\,\dx \dy . \notag 
\end{eqnarray}
The coarea formula implies that $I_{M,\lambda,x}(z)$ and $I_{M,\lambda}(z)$ 
can be expressed 
by 
\begin{equation}
\begin{array}{rcl}
I_{M,\lambda,x}(z)&=&\displaystyle \int_0^{\infty} t^z \, \psi_{\lambda,x}'(t) \,dt,\\[4mm]
I_{M,\lambda}(z)&=&\displaystyle \int_0^{\infty} t^z \, \psi_{\lambda}'(t) \,dt.
\end{array}
\notag
\end{equation}
We remark that $\psi_{\lambda}(t)$ has derivative almost everywhere. 
This was shown in Proposition 3.3 of \cite{OS2} when $M$ is an Euclidean submanifold.  The Riemannian case can be shown similarly.

\begin{lemma}\label{even_odd}
{\rm (\cite{G,OS2,Onew})} As above, 
\begin{enumerate}
\item 
There exists $d_0>0$ such that, for each $x\in M$ the function $\psi_{\lambda,x}(t)$ extends to a smooth function defined on $(-d_0,d_0)$, which we denote by the same symbol $\psi_{\lambda,x}$, such that $\psi_{\lambda,x}(-t)=(-1)^m\psi_{\lambda,x}(t)$. 
\item 
$\psi_{\lambda,x}'(t)$ can be expressed as 
\begin{equation}
\psi_{\lambda,x}'(t)
=t^{m-1} \, \overline{\varphi}_{\lambda,x}(t) 
\notag
\end{equation}
for some smooth function $\overline{\varphi}_{\lambda,x}$ with 
\begin{equation}
\overline{\varphi}_{\lambda,x}^{{}\>\,(2j+1)}(0)=0
\>\>\>(j=0,1,2,\dots). \nonumber
\end{equation}

\item 
When $\lambda(x,y)\equiv1$ we have 
\[
\overline{\varphi}_{1,x}(0)=o_{m-1}, 
\]
where $o_k$ is the volume of the unit $k$-sphere in $\RR^{k+1}$. 

\item 
Suppose $(\lambda_i)_{i=1}^\infty$ is a sequence of smooth functions with derivatives of all orders converging uniformly to $0$. 
Then $\psi_{\lambda_i,x}$ and its derivatives converge uniformly to $0$. 
\end{enumerate}
\end{lemma}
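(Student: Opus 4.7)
The plan is to reduce the four claims to a one-variable statement by passing to polar coordinates in a chart around $x$ and showing that the integrand has a specific parity.

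In the Riemannian case choose normal coordinates at $x$, so $y = \exp_x(v)$ and $d(x,y) = |v|$; in the Euclidean case parametrize $M$ locally as a graph $y(v) = x + v + h(v)$ with $v \in T_x M$ and $h(0) = dh(0) = 0$, so $|x-y(v)|^2 = |v|^2 + |h(v)|^2$. Writing $v = s\theta$ with $\theta \in S^{m-1}$, the Taylor expansion shows that $h(s\theta)/s^2$ extends smoothly to a function on $\RR \times S^{m-1}$. Consequently, in the Euclidean case one has $|x-y(s\theta)|^2 = s^2 Q(s,\theta)$, with $Q$ smooth on $\RR \times S^{m-1}$, $Q(0,\theta) = 1$, and $Q(-s,-\theta) = Q(s,\theta)$. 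Setting $\tilde{\rho}(s,\theta) = s\sqrt{Q(s,\theta)}$ gives a smooth function with $\partial_s\tilde{\rho}(0,\theta) = 1$ and the symmetry $\tilde{\rho}(-s,-\theta) = -\tilde{\rho}(s,\theta)$. By the implicit function theorem, uniformly in $\theta \in S^{m-1}$, there is a smooth solution $s = r^*(t,\theta)$ on $(-d_0,d_0) \times S^{m-1}$ of $\tilde{\rho}(s,\theta) = t$ satisfying $r^*(-t,-\theta) = -r^*(t,\theta)$. The Riemannian case is simply $h \equiv 0$, $r^*(t,\theta) = t$.

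In these coordinates
\[
\psi_{\lambda,x}(t) = \int_{S^{m-1}} \int_0^{r^*(t,\theta)} \lambda(x,y(s\theta))\,J(s\theta)\,s^{m-1}\,ds\,d\theta,
\]
where $J$ is the volume Jacobian of the parametrization. This formula extends smoothly to $t \in (-d_0,d_0)$. Differentiating in $t$ and writing $r^*(t,\theta) = t\,u(t,\theta)$ with $u$ smooth and $u(0,\theta) = 1$ gives $\psi'_{\lambda,x}(t) = t^{m-1}\overline{\varphi}_{\lambda,x}(t)$ with
\[
\overline{\varphi}_{\lambda,x}(t) = \int_{S^{m-1}} F(t,\theta)\,d\theta, \qquad F(t,\theta) = \lambda(x,y(r^*\theta))\,J(r^*\theta)\,u(t,\theta)^{m-1}\,\partial_t r^*(t,\theta).
\]
A factor-by-factor check using $r^*(-t,-\theta)(-\theta) = r^*(t,\theta)\theta$, $u(-t,-\theta) = u(t,\theta)$, and $\partial_t r^*(-t,-\theta) = \partial_t r^*(t,\theta)$ shows $F(-t,-\theta) = F(t,\theta)$. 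Since the antipodal map preserves the volume measure on $S^{m-1}$, the change of variable $\theta \mapsto -\theta$ gives $\overline{\varphi}_{\lambda,x}(-t) = \overline{\varphi}_{\lambda,x}(t)$, which yields (2) (evenness, hence vanishing of all odd Taylor coefficients at $0$). Integrating then gives $\psi_{\lambda,x}(-t) = (-1)^m\psi_{\lambda,x}(t)$, establishing (1).

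Part (3) follows by evaluating at $t = 0$: all factors of $F$ equal $1$ when $\lambda \equiv 1$, so $\overline{\varphi}_{1,x}(0) = \int_{S^{m-1}} d\theta = o_{m-1}$. For (4), note that $F$ depends on $\lambda$ and its derivatives only through evaluations on a fixed compact neighborhood of $x$, and this dependence is linear with smooth coefficients. Hence uniform convergence of $\lambda_i$ with all derivatives forces uniform convergence of $F$ and all its $t$-derivatives, and therefore of $\overline{\varphi}_{\lambda_i,x}$ and $\psi_{\lambda_i,x}$ with all derivatives. The main technical hurdle is the Euclidean case: one must verify carefully that the polar decomposition of the extrinsic distance yields a smooth function $\tilde{\rho}$ on $\RR \times S^{m-1}$ with the required symmetry, and that the implicit-function inversion $r^*$ is smooth in both variables on the whole strip $(-d_0,d_0) \times S^{m-1}$, which requires uniformity of the inversion in $\theta$ (available from compactness of $S^{m-1}$). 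With these in hand the symmetry calculation reduces to bookkeeping.
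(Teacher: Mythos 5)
Your proof is correct and follows essentially the same route as the sources the paper defers to (\cite{OS2}, \cite{FV}, \cite{G}): express $M$ locally as a graph over $T_xM$ (resp.\ use $\exp_x$), pass to polar coordinates, exploit the evenness of $|x-y(s\theta)|^2/s^2$ under $(s,\theta)\mapsto(-s,-\theta)$, and invert the distance along each ray — the same decomposition the paper itself uses in its ``method using graphs'' subsection. The only point you should make explicit is that $d_0$ must in addition be taken small enough (uniformly in $x$, by compactness of $M$) that $M\cap B_t(x)$ coincides with the graph piece for all $t\le d_0$ — i.e.\ no distant sheet of $M$ enters the extrinsic ball — which is exactly the transversality condition on the spheres $\partial B_t(x)$ that the paper imposes; without it the polar-coordinate formula for $\psi_{\lambda,x}$ would omit contributions.
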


When $M$ is a closed submanifold of $\RR^n$ the statements were shown in Proposition 3.1 and Corollary 3.2 of \cite{OS2}, and Proposition 2.3 of \cite{Onew}. 
We remark that even if we substitute $\la_\nu(x,y)$  
in the proof of Proposition 2.3 of \cite{Onew} by a general function $\lambda$, 
the proof works as well.  
When $M$ is a closed connected Riemannian manifold (2) and (3) were shown in Theorem 3.1 and its proof of \cite{G}. (1) and (4) can be shown by the same argument as in the case of a submanifold of $\RR^n$.

\medskip
Thus $I_{M,\lambda,x}(z)$ and $I_{M,\lambda}(z)$ are reduced to the integral introduced in the previous subsection, and therefore they can be extended to meromorphic functions on $\CC$ with only simple poles. 

\begin{definition}\label{def_B_M_etc} \rm 
We denote them by $B_{M,\lambda,x}^{{\rm loc}}(z)$ and $B_{M,\lambda}(z)$ and call them the {\em $\lambda$-weighted meromorphic potential function at $x$} and the {\em $\lambda$-weighted meromorphic energy function}. 
They have residues at $z=-m-2j$ $(j=0,1,2,\dots)$, which we denote by $\Rsub{M,\lambda,x}^{\rm loc}(-m-2j)$ and $\Rsub{M,\lambda}(-m-2j)$ and call them the {\em $\lambda$-weighted (local) residue of $M$ (at $x$) at $z=-m-2j$}. 

In particular, when $\lambda\equiv1$ we drop the letter $\lambda$ off. 
In this case, $B_{M}(z)$ is called {\em Brylinski's beta function of $M$}.\footnote{ We use the same symbol $B$ for balls and meromorphic functions. The author hopes it does not cause any confusion.} 
\end{definition}
$B_{M}(z)$ was first introduced for knots by Brylinski \cite{B} and for closed submanifolds of $\RR^n$ by Fuller and Vemuri \cite{FV}. 

\smallskip
By \eqref{IMrho}, we have 
\begin{eqnarray}
B_{M,\lambda}(z)&=&\displaystyle \int_M B_{M,\lambda,x}^{{\rm loc}}(z)\,\dx  \notag \label{BMintBMloc} 
\end{eqnarray}
if $z$ is not a pole of $B_{M,\lambda,x}^{{\rm loc}}$, whereas by \eqref{residue_I_phi} the residues are given by  
\begin{equation}\label{RsubMrhoddtpsix}
\begin{array}{rcl}
\Rsub{M,\lambda,x}^{\rm loc}(-m-2j)&=&\displaystyle \frac1{(m+2j-1)!}\left.\frac{d^{\,m+2j}}{dt^{\,m+2j}} \,\psi_{\lambda,x}(t)\right|_{t=0}\,, \\[4mm]
\Rsub{M,\lambda}(-m-2j)
&=&\displaystyle \int_M \Rsub{M,\lambda,x}^{\rm loc}(-m-2j)\,\dx,  
\end{array}
\end{equation}
which were shown in Proposition 3.4 of \cite{OS2} when $M$ is a closed submanifold of $\RR^n$. 

\medskip
The regularization of potential and energy is given by 
\begin{eqnarray}
V_{M,\lambda,x}(z)&=&\displaystyle \left\{\begin{array}{ll}
\displaystyle B_{M,\lambda,x}^{\rm loc}(z) & \hspace{1cm} \mbox{if $z$ is not a pole of $B_{M,\lambda,x}^{\rm loc}$},\\[2mm]
\displaystyle \lim_{w\to z}\left(B_{M,\lambda,x}^{\rm loc}(w)-\frac{\Rsub{M,\lambda,x}^{\rm loc}(z)}{w-z}\right) & \hspace{1cm} \mbox{if $z$ is a pole of $B_{M,\lambda,x}^{\rm loc}$}, 
\end{array}\right. \notag \label{regularization_of_potential} \\[2mm]
E_{M,\lambda}(z)&=&\displaystyle \left\{\begin{array}{ll}
\displaystyle B_{M,\lambda}(z) & \hspace{1cm} \mbox{if $z$ is not a pole of $B_{M,\lambda}$},\\[2mm]
\displaystyle \lim_{w\to z}\left(B_{M,\lambda}(w)-\frac{\Rsub{M,\lambda}(z)}{w-z}\right) & \hspace{1cm} \mbox{if $z$ is a pole of $B_{M,\lambda}$}
\end{array}
\right. \label{def_regularized_energy}
\end{eqnarray}
cf. Proposition 3.6 of \cite{OS2} in the case when $M$ is a closed submanifold of $\RR^n$. 
We remark that if $M$ is not connected, the residues are the sums of the residues of the components, but this does not hold for the energies. 

\begin{definition} \rm 
We call $V_{M,\lambda,x}$ and $E_{M,\lambda}$ the {\em $\lambda$-weighted regularized potential} and {\em energy}  
respectively. 
\end{definition}

\smallskip
The ball $B_t(x)$ is called the {\em extrinsic ball} when $M$ is an Euclidean submanifold and the {\em geodesic ball} when $M$ is a Riemannian manifold. 
The series expansion of the volume $\psi_x(t)$ of an extrinsic ball was given in \cite{KP} and that of a geodesic ball was given in \cite{G}, both of which will appear later in \eqref{KP-expansion} and \eqref{Gray_volume_expansion_intrinsic}.

\subsection{Regularization using graphs for Euclidean submanifolds}\label{method_graph}
%
%
When $M$ is a submanifold of $\RR^n$ one can use normal coordinates to carry out reguralization, namely we express $M$ locally as a graph of a function on each tangent space and use partial derivatives of the defining function to give geometric quantities. 
This method was used by Auckly and Sadun \cite{AS} to define a \M invariant energy for surfaces using Hadamard regularization, and by Fuller and Vemuri \cite{FV}. 
This method is more suitable for calculation of residues in various examples. 

Suppose $M$ is locally expressed as a graph of a smooth function $f$ defined on an $m$-ball with center $0$ and radius $\rho$ in the tangent space $T_xM$ of $M$ at $x$ for some positive number $\rho$; 
 $f\colon T_xM\supset B_\rho^m(0)\to(T_xM)^\perp\cong\RR^{n-m}$, which vanishes to second order at $0$. 
Throughout the paper we agree that we use an orthonormal basis $\{\partial/\partial u_i\}_{1\le i\le m}$ for $T_xM$. 
Furthermore, when $M$ is a hypersurface we assume that each $\partial/\partial u_i$ belongs to a principal direction of $M$ at $x$. 

Put $V=f(B_\rho^m(0))$. Then 
\begin{equation}\label{I_M_lambda_x_z_graph}
I_{M,\lambda,x}(z)=\int_V{|y-x|}^z\lambda(x,y)\,\dy +\int_{M\setminus V}{|y-x|}^z\lambda(x,y)\,\dy . \notag
\end{equation}
The second term above is an entire function of $z$, hence it does not play a role in the process of regularization. 
The first term is given by 
\begin{equation}\label{BMx_graph_1}
\int_V{|y-x|}^z\lambda(x,y)\,\dy 
=
\int_{B_\rho^m(0)}{\left({|u|}^2+{|f(u)|}^2\right)}^{z/2}\lambda\bigl(0,(u,f(u))\bigr)A_f(u)\,du, 
\end{equation}
where $A_f(u)$ 
is the ``area density'' of the graph of $f$; 
\begin{equation}\label{def_A_f}
A_f(u)=\sqrt{\det\left(\delta_{ij}+\left\langle \pd{f}{u_i}, \pd{f}{u_j}\right\rangle\right)}\,. 
\end{equation}
In particular, when $M$ is a hypersurface, $A_f(u)=\sqrt{1+|\nabla f(u)|^2}$. 
Putting $r=|u|$ and $u=rw$, $w\in S^{m-1}$, \eqref{BMx_graph_1} can be expressed by 
\begin{eqnarray}
&&\int_0^\rho dr\int_{S^{m-1}}r^z {\biggl(1+\frac{{|f(rw)|}^2}{r^2}\biggr)}^{z/2} \lambda\bigl(0,(rw,f(rw))\bigr)A_f(rw)\,r^{m-1}\,dv(w) \notag \\[1mm] 
&&=\int_0^\rho r^{z+m-1}\bigg[\int_{S^{m-1}} {\biggl(1+\frac{{|f(rw)|}^2}{r^2}\biggr)}^{z/2} \lambda\bigl(0,(rw,f(rw))\bigr)A_f(rw)\,dv(w)\bigg]dr. \label{BMlambdax_1st_term}
\end{eqnarray}
Put 
\begin{equation}\label{def_xi}
\xi_\la(r,w,z)={\biggl(1+\frac{{|f(rw)|}^2}{r^2}\biggr)}^{z/2} \lambda\bigl(0,(rw,f(rw))\bigr)A_f(rw), \quad
\Xi(r,z)=\int_{S^{m-1}} \xi_\la(r,w,z)\,dv(w).
\end{equation}
Lemma 3.2 of \cite{FV}, which follows from Malgrange's preparation theorem, implies that 
\[
(0,\rho]\times S^{m-1}\times \CC \ni (r,w,z) \mapsto {\biggl(1+\frac{{|f(rw)|}^2}{r^2}\biggr)}^{z/2} \in \CC 
\]
can be extended to a smooth function on $\RR\times S^{m-1}\times \CC$. 
By \cite{blowup} 
\[
[-\rho,\rho]\times S^{m-1}\ni (r,w)\mapsto \lambda\bigl(0,(rw,f(rw))\bigr)A_f(rw) \in \RR
\]
is smooth. 
Therefore $\xi_\la(r,w,z)$ is smooth on $[-\rho,\rho]\times S^{m-1}\times \CC$. 
Since $\xi_\la(-r,-w,z)=\xi_\la(r,w,z)$, $\Xi(\,\bt\,,z)$ is an even function. 
Moreover, since $\xi_\la(r,w,z)$ is an entire function of $z$ for each fixed $(r,w)$, so is $\Xi_\la(r,z)$  
for each $r$ by Lemma 3.1 of \cite{FV}. 

Now \eqref{BMlambdax_1st_term} can be regularized by a similar\footnote{$\Xi$ depends also on $z$, which is different from $\varphi$ in \eqref{Iphiz}.} method to that of \eqref{Iphiz}:
\begin{eqnarray}
\eqref{BMlambdax_1st_term}&=& \displaystyle 
\int_0^{\rho} r^{z+m-1}\left(\Xi(r,z)
-\sum_{j=0}^{k-1}\frac{1}{j!} \, \frac{\partial^j\Xi}{\partial r^j}\,(0,z) \,r^{j}\right)dr  \label{IFVeta} 
%
+\sum_{j=0}^{k-1}\frac{{\rho}^{\,z+m+j}}{j!\,(z+m+j)}\, \frac{\partial^j\Xi}{\partial r^j}\,(0,z),
\notag
\end{eqnarray}
where $k$ is an arbitrary natural number. 
Since the integrand of the first term can be estimated by $r^{z+m+k-1}$, it is integrable on $[0,\rho]$ when $\R z >-m-k$. 
Therefore \eqref{BMlambdax_1st_term} can be extended by analytic continuation to a meromorphic function defined on $\R z>-m-k$ with possible simple poles at $z=-m, \dots , -m-k+1$. 
Since $k$ is arbitrary and $\Xi_\la(\,\bt\,,z)$ is an even function, $I_{M,\la,x}(z)$ can be extended to a meromorphic function on $\CC$ with possible simple poles at $z=-m,-m-2,-m-4,\dots$ with residue 
\[
\resat{z=-m-2j}
I_{M,\la,x}(z)=\frac1{(2j)!}\frac{\partial^{2j}\Xi}{\partial r^{2j}}\,(0,-m-2j)  \quad (j\in\NN\cup\{0\}).
\]
Smoothness of $\xi_\la$ implies that if we put $a_{\la,2j}(-m-2j;w)$ to be the coefficient of $r^{2j}$ of the Maclaurin expansion of $\xi_\la(r,w,z)$ in $r$, i.e. 
\begin{equation}\label{def_a_la_k}
a_{\la,2j}(-m-2j;w)=\frac1{(2j)!}\frac{\partial^{2j}\xi_\la}{\partial r^{2j}}\,(0,w,-m-2j), 
\end{equation}
the local residue is given by 
\begin{equation}\label{local_residue_graph_FV}
\Res_{M,\lambda,x}^{\rm loc}(-m-2j)=\int_{S^{m-1}}a_{\la,2j}(-m-2j;w)\,dv(w). \quad
\end{equation}

\noindent
The above argument for $\la\equiv1$ can be found in Theorem 3.3 of \cite{FV}.

\subsection{Curvatures of a graph in terms of the defining function}\label{preliminary_calculation_graph}
We summarize the formulas for curvatures that are needed when studying the residues by the methods introduced in Subsection \ref{method_graph}. 
Some of the content of this subsection can be found on the web or in some literatures. 

In what follows we use the Einstein convention, namely the summation is taken for all the indices that appear in pairs (which also includes $\partial_j^{\,2}$) in each term. 

Let $M=(M,g)$ be an $m$-dimensional closed connected Riemannian manifold. 
Put $G=(g_{ij})$ and $G^{-1}=(g^{ij})$. 
Let $R_{ijk}^{\phantom{ijj}l}$ be the Riemann curvature tensor, $R_{ijkl}=g_{\a l}R_{ijk}^{\phantom{ijj}\a}$, $R_{jk}=R_{jkl}^{\phantom{jkl}l}$ the Ricci curvature tensor, and $\Sc=g^{jk}R_{jk}$ the scalar curvature. 
The norms of the Riemann and Ricci curvature tensors are given by 
\begin{eqnarray}
{|\Rm|}^2&=&\displaystyle 
g^{\a\a'}g^{\beta\beta'}g^{\gamma\gamma'}g^{\delta\delta'}\,R_{\a\beta\gamma\delta}R_{\a'\beta'\gamma'\delta'}, \label{Rm_square}\\
{|\mbox{\rm Ric}|}^2&=&\displaystyle 
g^{\a\a'}g^{\beta\beta'}R_{\a\beta}R_{\a'\beta'}. \label{Ricci_sq} \notag
\end{eqnarray}

Assume that $M$ is an $m$ dimensional 
submanifold of $\RR^n$. 
Let $\nabla$ be the Levi-Civita connection, $h$ the second fundamental form $h(X,Y)={(\nabla_X Y)}^\perp$, 
$\Vert h\Vert$ the Hilbert-Schmidt norm : ${\Vert h\Vert}^2=\sum_{i,j}{\vert h(\vect e_i,\vect e_j)\vert}^2$, and $H$ the mean curvature vector $H=\mbox{\rm trace}\, h=\sum_ih(\vect e_i, \vect e_i)$, where $\{\vect e_i\}$ is an orthonormal basis of $T_xM$. When $M$ is a hypersurface $H=\sum_i\kappa_i$, where $\kappa_i$ is a principal curvature. 

Let $x$ be a point on $M$. 
We assume $x=0$, $T_xM=\RR^m$, and $M$ is locally expressed near $x=0$ as a graph of a function $f\colon\RR^m\to\RR^{n-m}$. 
A point $y\in M$ near $x=0$ can be expressed by $y(u)=\bigl(u, f(u)\bigr)$. 
Put $\partial_i={\partial}/{\partial u_i}$, $f_i=\partial_i f$, and $f_{ij}=\partial_i\partial_j f$ etc. 
Then 
\[
g_{ij}
=\left\langle\pd{y}{u_i}, \pd{y}{u_j}\right\rangle
=\delta_{ij}+\ip{f_i}{f_j}, 
\quad (1\le i,j\le m),
\]
where $\delta_{ij}$ is the Kronecker delta. 
Put $g=\det G$. Then $A_f=\sqrt{g}$. 

Let $\eao $ denote the equality at the origin in what follows. We have $f_j\eao 0, \, g_{ij}\eao \delta_{ij}, \, g^{ij}\eao \delta_{ij}.$ 
The Christoffel symbols and the Ricci curvature tensor are given by 
\begin{eqnarray}
\Gamma_{ij}^k&=&\displaystyle \frac12 \, g^{k\ell}\left(\partial_ig_{j\ell}+\partial_jg_{i\ell}-\partial_\ell g_{ij}\right)
\notag \\
&=&g^{k\ell}\ip{f_\ell}{f_{ij}} \notag \\
&\eao&  0, \notag \\[2mm] 
R_{ij}&=& \partial_\ell\Gamma_{ij}^\ell-\partial_j\Gamma_{i\ell}^\ell
+\Gamma_{ij}^k\Gamma_{\ell k}^\ell+\Gamma_{i\ell}^k\Gamma_{jk}^\ell  \notag \\[1mm]
&=& \big(\partial_\ell g^{\ell k}\big)\ip{f_{k}}{f_{ij}}-\big(\partial_j g^{\ell k}\big)\ip{f_{k}}{f_{i\ell}} 
+g^{pk}\ip{f_{k}}{f_{ij}}g^{\ell h}\ip{f_{h}}{f_{\ell p}}
- g^{pk}\ip{f_{k}}{f_{i\ell}}g^{\ell h}\ip{f_{h}}{f_{jp}}  
\label{R_ij_gen_1} \\
&&+g^{\ell k}\ip{f_{k\ell}}{f_{ij}}-g^{\ell k}\ip{f_{kj}}{f_{i\ell}}, \label{R_ij_gen_2} 
%
%
\end{eqnarray}
where the indices $i,j,k,\ell$ are taken from $1,\dots, m$, which implies that the scalar curvature is given by 
\begin{equation}\label{Sc_at_0}
\Sc\eao
\ip{f_{jj}}{f_{kk}}-\ip{f_{jk}}{f_{jk}}
=\sum_{j\ne k}\left(\ip{f_{jj}}{f_{kk}}-\ip{f_{jk}}{f_{jk}}\right).
\end{equation}

\begin{lemma}\label{lem_la_A_f_Delta_Sc_H}
Let $M$ be an $m$ dimensional submanifold of $\RR^n$ and $x$ be a point on $M$. 
Suppose $x=0$, $T_xM=\RR^m$, and $M$ is expressed near $x=0$ as a graph of a function $f\colon\RR^m\to\RR^{n-m}$. 
Then at the origin we have 
\begin{eqnarray}
\Delta\Sc(0)&=&\displaystyle -4\ip{f_{\ell\ell}}{f_{ij}}\,\ip{f_{ik}}{f_{jk}}
-2\ip{f_{\ell\ell}}{f_{ij}}\,\ip{f_{kk}}{f_{ij}} \notag \label{Delta_Sc_at_0-1} \\ 
&&\displaystyle +2\big(\ip{f_{ij}}{f_{kl}}\big)^2 +2\big(\ip{f_{ik}}{f_{jk}}\big)^2
+2\ip{f_{ik}}{f_{i\ell}}\,\ip{f_{jk}}{f_{j\ell}} \label{Delta_Sc_at_0-2} \notag \\[1mm] 
%
%
&&\displaystyle +4\sum_{j\ne k}\ip{f_{jjk}}{f_{kkk}}
-4\sum_{j\ne k}\ip{f_{jkk}}{f_{jkk}}
+4\sum_{k<\ell, j\ne k,\ell}
\ip{f_{jkk}}{f_{j\ell\ell}}
-12\sum_{j<k<\ell}
\ip{f_{jk\ell}}{f_{jk\ell}}   \label{Delta_Sc_at_0-3} \\[2mm]
&&\displaystyle +\sum_{j\ne k\ne l\ne j}
\big(
2\ip{f_{jj}}{f_{jjkk}}
+2\ip{f_{jj}}{f_{kkkk}}
+4\ip{f_{jj}}{f_{kkll}}
-4\ip{f_{jk}}{f_{jjjk}}
-4\ip{f_{jk}}{f_{jkll}}
\big), \qquad{} \label{f****-terms_Delta_Sc} \\[2mm]
\Delta {|H|}^2(0)&=&\displaystyle -2\ip{f_{ii}}{f_{k\ell}}\,\ip{f_{jj}}{f_{k\ell}}
-4\ip{f_{i\ell}}{f_{j\ell}}\,\ip{f_{ij}}{f_{kk}} \notag \\[1mm]
&&\displaystyle +2\sum_i\ip{f_{iii}}{f_{iii}}
+4\sum_{j\ne k}\ip{f_{jjk}}{f_{kkk}}
+2\sum_{j\ne k}\ip{f_{jkk}}{f_{jkk}}
+4\sum_{k<\ell, j\ne k,\ell}
\ip{f_{jkk}}{f_{j\ell\ell}}   \label{Delta_H_at_0_third} \\[2mm]
&&\displaystyle +2\sum_{j\ne k\ne l\ne j}\big(
\ip{f_{jj}}{f_{jjjj}}
+2\ip{f_{jj}}{f_{jjkk}}
+\ip{f_{jj}}{f_{kkkk}}
+2\ip{f_{jj}}{f_{kkll}}
\big).   \label{f****-terms_Delta_H}
%
\end{eqnarray}
\end{lemma}

\begin{proof}
(i) First we show 
\begin{equation}\label{Delta_Sc_at_0_first}
\begin{array}{rcl}
\Delta\Sc&\eao&\displaystyle -4\ip{f_{\ell\ell}}{f_{ij}}\,\ip{f_{ik}}{f_{jk}}
-2\ip{f_{\ell\ell}}{f_{ij}}\,\ip{f_{kk}}{f_{ij}}  \\[1mm]
&&\displaystyle +2\big(\ip{f_{ij}}{f_{kl}}\big)^2 +2\big(\ip{f_{ik}}{f_{jk}}\big)^2
+2\ip{f_{ik}}{f_{i\ell}}\,\ip{f_{jk}}{f_{j\ell}}  \\[1mm] 
&&\displaystyle +\partial_k^{\,2}\ip{f_{jj}}{f_{\ell\ell}}-\partial_k^{\,2}\ip{f_{j\ell}}{f_{j\ell}}. 
\end{array}
\end{equation}
Applying \eqref{R_ij_gen_1}, \eqref{R_ij_gen_2}, and 
\begin{equation}\label{Laplacian_at_0}
\begin{array}{rcl}
\Delta \eta&=&\displaystyle \frac1{\sqrt g}\,\partial_j\big(g^{jk}\sqrt{g}\,\partial_k\eta \big) 
\eao \displaystyle  
\partial_j^{\,2}\eta 
\end{array}
\notag
\end{equation}
to $\Sc=g^{ij}R_{ij}$, we obtain 
\begin{equation}\label{Delta_Sc_at_0}
\Delta\Sc(0)=
\big(\partial_k^{\,2}g^{ij}\big)R_{ij}
+2
\big(\partial_k g^{ij}\big)\big(\partial_k R_{ij}\big)
+
g^{ij}\big(\partial_k^{\,2}R_{ij}\big).
\end{equation}
Taking partial derivatives of $G^{-1}G=I$ we have 
\begin{equation}\label{partial_g^ij}
\partial_k g^{ij} \eao 0, \>\> 
\partial_k\partial_\ell\, g^{ij} \eao -\big(\ip{f_{ik}}{f_{j\ell}}+\ip{f_{i\ell}}{f_{jk}}\big). 
\end{equation}
Substituting \eqref{partial_g^ij} to \eqref{Delta_Sc_at_0} we have 
\begin{eqnarray}\label{}
\displaystyle 
\big(\partial_k^{\,2}g^{ij}\big)R_{ij} &\eao& \displaystyle 
-2\ip{f_{\ell\ell}}{f_{ij}}\,\ip{f_{ik}}{f_{jk}}
+2\big(\ip{f_{ik}}{f_{jk}}\big)^2, \label{computation_Delta_Sc-1} \notag \\[3mm]
\displaystyle 
\big(\partial_k g^{ij}\big)\big(\partial_k R_{ij}\big) &\eao& 0, \label{computation_Delta_Sc-2} \notag \\[3mm]
\displaystyle 
g^{ij}\big(\partial_k^{\,2}R_{ij}\big) &\eao& \displaystyle -2\ip{f_{\ell\ell}}{f_{ij}}\,\ip{f_{ik}}{f_{jk}}
-2\ip{f_{\ell\ell}}{f_{ij}}\,\ip{f_{kk}}{f_{ij}} 
%
+2\big(\ip{f_{ij}}{f_{kl}}\big)^2 
+2\ip{f_{ik}}{f_{i\ell}}\,\ip{f_{jk}}{f_{j\ell}} \hspace{1.0cm}{} \label{computation_Delta_Sc-3} \notag \\[2mm]
&&\displaystyle 
+\partial_k^{\,2}\ip{f_{ii}}{f_{\ell\ell}}-\partial_k^{\,2}\ip{f_{i\ell}}{f_{i\ell}}, \label{computation_Delta_Sc-4}
\end{eqnarray}
which implies \eqref{Delta_Sc_at_0_first}. 

Next we compute the last two terms of \eqref{Delta_Sc_at_0_first}
\begin{equation}\label{Delta_Sc0_c_d_terms}
\sum_k\sum_{j,\ell }\big(\partial_k^{\,2}\ip{f_{jj}}{f_{\ell \ell }}-\partial_k^{\,2}\ip{f_{j\ell }}{f_{j\ell }}\big)
=
2\sum_k\sum_{j\ne \ell }\bigl(f_{jj}f_{\ell \ell kk}+(f_{jjk}f_{\ell \ell k}-f_{j\ell k}f_{j\ell k})\bigr). 
\end{equation}
By taking the sum of the terms in the cases of $k=j\ne l$, $j\ne l=k$, and $k\ne j\ne l\ne k$ of the right hand side of \eqref{Delta_Sc0_c_d_terms} 
we obtain \eqref{Delta_Sc_at_0-3} and \eqref{f****-terms_Delta_Sc}.

\smallskip
(ii) First we show 
\begin{equation}\label{Delta_H_at_0}
\Delta {|H|}^2\eao\displaystyle -2\ip{f_{ii}}{f_{k\ell}}\,\ip{f_{jj}}{f_{k\ell}}
-4\ip{f_{i\ell}}{f_{j\ell}}\,\ip{f_{ij}}{f_{kk}} 
+2\ip{f_{iik}}{f_{jjk}}
+2\ip{f_{kk}}{f_{ii\ell\ell}}.  
\end{equation}
To compute the mean curvature vector $H$, we agree that indices $i,j,k,\ell,p,\dots\in\{1,\dots,m\}$ are for $M$ and $\sigma, \tau, \upsilon,\dots\in\{1,\dots,n\}$ are for $\RR^n$. 
Recall that a point of $M$ near $x=0$ is expressed by $y(u)=(u,f(u))$ $(u\in\RR^m)$. Put 
\[
B_i^\si=\pd{y^\si}{u^i}=\left\{
\begin{array}{ll}
\delta^\si_i & \quad \si\le m, \\
\partial_i f^\si & \quad \si>m,
\end{array}
\right.
\]
where $f(u)=(f^{m+1}(u),\dots,f^{n}(u))$. 
Then the Euler-Schouten tensor is given by 
\[
H_{ij}^\si=\partial_iB_j^\si-\Ga_{ij}^kB_k^\si+B_i^\tau\Ga_{\tau\upsilon}^\si B_j^\upsilon,
\]
where $\Ga_{\tau\upsilon}^\si$ are the Christoffel symbols of $\RR^n$, and hence $\Ga_{\tau\upsilon}^\si=0$. 
Therefore we have 
\[
H_{ij}^\si=\left\{
\begin{array}{ll}
-g^{kp}\ip{f_{p}}{f_{ij}}\delta_k^\si=-g^{\si p}\ip{f_{p}}{f_{ij}} & \quad \si\le m, \\[2mm]
f_{ij}^\si-g^{kp}\ip{f_{p}}{f_{ij}}f_k^\si & \quad \si>m.
\end{array}
\right.
\]
The mean curvature vector $H$ is given by 
\begin{equation}\label{mean_curv_vect_H}
\begin{array}{rcl}
H^\si&=&\displaystyle g^{ij}H_{ij}^\si 
=\displaystyle \left\{
\begin{array}{ll}
-g^{ij}g^{\si p}\ip{f_{p}}{f_{ij}} & \quad \si\le m, \\[2mm]
g^{ij}f_{ij}^\si-g^{ij}g^{kp}\ip{f_{p}}{f_{ij}}f_k^\si & \quad \si>m,
\end{array}
\right.
\end{array}
\end{equation}
which implies 
\begin{equation}\label{Delta_H_sq_gen}
\Delta {|H|}^2(0)=\sum_{\ell\le m}\partial_\ell^{\,2}\biggl(
\sum_{\si\le m} \left(g^{ij}g^{\si p}\ip{f_{p}}{f_{ij}}
\right)^2
+\sum_{\si> m} \left(g^{ij}f_{ij}^\si-g^{ij}g^{kp}\ip{f_{p}}{f_{ij}}f_k^\si\right)^2
\biggr).
\end{equation}
Now \eqref{Delta_H_at_0} follows from 
\[
\begin{array}{rcl}
\displaystyle \sum_{\ell\le m}\partial_\ell^{\,2} \biggl(\sum_{\si\le m} \left(g^{ij}g^{\si p}\ip{f_{p}}{f_{ij}}\right)^2\biggr)
&\eao &\displaystyle 
2\ip{f_{ii}}{f_{k\ell}}\,\ip{f_{jj}}{f_{k\ell}}, \\[4mm]
\displaystyle \sum_{\ell\le m}\partial_\ell^{\,2} \biggl(\sum_{\si> m} \left(g^{ij}f_{ij}^\si\right)^2\biggr)
&\eao &\displaystyle 2\big\langle f_{kk}, -2\ip{f_{i\ell}}{f_{j\ell}}f_{ij}
+{f_{ii\ell\ell}}
\big\rangle
+2\ip{f_{iik}}{f_{jjk}}, \\[4mm] 
\displaystyle -2\sum_{\ell\le m}\partial_\ell^{\,2} \biggl(\sum_{\si> m} 
\left(g^{ij}f_{ij}^\si\right)
\left(g^{i'j'}g^{kp}\ip{f_{p}}{f_{i'j'}}f_k^\si\right)\biggr)
&\eao &\displaystyle -4\ip{f_{ii}}{f_{k\ell}}\,\ip{f_{jj}}{f_{k\ell}}  \\[4mm]
\displaystyle \sum_{\ell\le m}\partial_\ell^{\,2} \biggl(\sum_{\si> m} 
\left(g^{ij}g^{kp}\ip{f_{p}}{f_{ij}}f_k^\si\right)^2\biggr)
&\eao &\displaystyle 0.
\end{array}
\]

\eqref{Delta_H_at_0_third} and \eqref{f****-terms_Delta_H} can be obtained from the last two terms of \eqref{Delta_H_at_0}. 
\end{proof}

\smallskip
Assume $M$ is an $m$-dimensional hypersurface in what follows in this subsection. 
Assume that $\{\partial/\partial u_i\}$ is an orthonormal basis of $T_0M$, each $\partial/\partial u_i$ belonging to a principal direction of $M$ at $0$. 
Since 
$R_{jk\ell m}\eao\kappa_j\kappa_k(\delta_{jm}\delta_{k\ell}-\delta_{j\ell}\delta_{km})$ 
we have
\begin{eqnarray}
\Sc^2&=&\displaystyle 4\bigg(\sum_{i\ne j}\kappa_i\kappa_j\bigg)^2,  \label{Sc_square_principal_curv}  \\
{|\Rm|}^2&=&\displaystyle 2\Bigg(\bigg(\sum_i\kappa_i^{\,2}\bigg)^2-\sum_j\kappa_j^{\,4}\Bigg), \label{Rm_square_principal_curv} \\
{|\mbox{\rm Ric}|}^2&=&\displaystyle \sum_i\kappa_i^{\,4}
-2\bigg(\sum_i\kappa_i^{\,3}\bigg)\sum_j\kappa_j
+\bigg(\sum_i\kappa_i^{\,2}\bigg)\bigg(\sum_j\kappa_j\bigg)^2.
\label{Ricci_sq_principal_curv} 
\end{eqnarray}

\begin{lemma}
Under the same assumption as above, 
we have at the origin 
\begin{eqnarray}
\Delta H&\eao & -2\sum_i\kappa_i^3-H\sum_i\kappa_i^2+\sum_if_{iiii}+2\sum_{i<j}f_{iijj}\,,  \label{Delta_H_m=4}\\
{|\nabla H|}^2&\eao & 2\sum_i{\Big(\sum_jf_{ijj}\Big)}^2, \label{grad_H_sq_m=4}\\
\Delta \Sc &\eao & -8\sum_{i\ne j}\kappa_i\kappa_j^3-4\sum_{i<j,k\ne i,j}
\kappa_i\kappa_j\kappa_k^2  \label{Delta_Sc_1} \\
&&\displaystyle +4\sum_{i\ne j}f_{iii}f_{ijj}+4\sum_{i<j,k\ne i,j}
f_{iik}f_{jjk}
-4\sum_{i\ne j}f_{ijj}^2-12\sum_{i<j<k}f_{ijk}^2  \label{Delta_Sc_center} \\
&&\displaystyle +2\sum_i(H-\kappa_i)f_{iiii}+2\sum_{i<j}(2H-\kappa_i-\kappa_j)f_{iijj}\,.  \label{Delta_Sc_3}
\end{eqnarray}
\end{lemma}

\begin{proof}
When the codimension is $1$, the inner product $\ip{f_{i}}{f_{j}}$ is substituted by the product $f_if_j$, so we have 
\begin{equation}\label{preliminaries_gij_hij_H_m=4}
\begin{array}{c}
\displaystyle g_{ij}=\delta_{ij}+f_if_j, 
\>\> g
=1+{|\nabla f|}^2,
\>\> g^{ij}=\delta^{ij}-\frac{f_if_j}{g}, \>\>
h_{ij}=\frac{f_{ij}}{\sqrt{g}}, 
\>\> H=g^{ij}h_{ij}. 
\end{array}
\end{equation}
Since $g_{ij}\eao\delta_{ij}$, $g^{ij}\eao\delta^{ij}$, $g\eao1$, and $f_{ij}\eao\delta_{ij}\kappa_i$, we have 
\begin{eqnarray}\label{Laplacian_H_m=4_o}
\Delta H&\eao&\displaystyle \sum_{i,\a}f_{ii\,\a\a}-\sum_{i,\a}f_{ii}{f_{\a\a}}^2-2\sum_i{f_{ii}}^3,  
%
\notag
\end{eqnarray}
which implies \eqref{Delta_H_m=4}. 
\eqref{grad_H_sq_m=4} follows from 
\[
\begin{array}{rcl}
{|\nabla H|}^2&=&\displaystyle g^{\a\beta}\pd{H}{u_\a}\pd{H}{u_\beta} 
\eao\displaystyle \sum_\a\biggl(\sum_if_{ii\a}\biggr)\biggl(\sum_jf_{jj\a}\biggr).
\end{array}
\]

Finally, \eqref{Delta_Sc_1} - \eqref{Delta_Sc_3} are obtained from the first equality of Lemma \ref{lem_la_A_f_Delta_Sc_H}. 
\end{proof}
%

\section{Residues of closed submanifolds of Euclidean spaces}

\subsection{Known formulae of residues}
%
Suppose $M$ is an $m$ dimensional closed submanifold of $\RR^n$. 
Assume $\lambda\equiv1$ in this Subsection. 
In this case we drop the letter $\lambda$ off. 
Recall that $\psi_{x}(t)=\textrm{Vol}(M\cap B_t(x))$, where $B_t(x)$ is the $n$-ball with radius $t$ and center $x$. 
Karp and Pinsky gave 
\begin{equation}\label{KP-expansion}
\textrm{Vol}(M\cap B_t(x))=\frac{o_{m-1}}m\,t^m
+\frac{o_{m-1}}{8m(m+2)}\left(2{\Vert h\Vert}^2-{\vert H\vert}^2\right)t^{m+2}+O(t^{m+3})
\end{equation}
(\cite{KP}), where $o_{m-1}$ is the volume of the unit $(m-1)$-sphere as before (see Subsection \ref{preliminary_calculation_graph} for $h$ and $H$).

When $M$ is a hypersurface in $\RR^{m+1}$ with principal curvatures $\ka_i$, ${\Vert h\Vert}^2=\sum_i{\ka_i}^2$ and 
$H=\sum_i\kappa_i$, i.e. $H$ is $m$ times the mean curvature. 

\begin{proposition}\label{prop_residues_closed_12}{\rm (\cite{B,FV,OS2})}
As above, 
\begin{enumerate}
\item The first residue is given by $\displaystyle \Rd{-m}{M}=\displaystyle o_{m-1}\,\mbox{\rm Vol}\,(M).$
\item When $m=1$ and $K$ is a knot in $\RR^3$ the second residue is given by 
\begin{equation}\label{residue_knot_2}
\Rd{-3}{K}= \frac14\int_K\kappa(x)^2 dx ,   \notag
\end{equation}
where $\ka$ is the curvature. 
\item When $m=2$ and $M$ is a closed surface in $\RR^3$ the second residue is given by 
\begin{equation}\label{residue_surface_2}
\Rd{-4}{M}= \frac\pi 8\int_M(\kappa_1-\kappa_2)^2\dx.
\end{equation}
It is equal to the Willmore energy modulo the Euler characteristic up to multiplication by a constant, and is invariant under \M transformations. 
\item In general the second residue is given by 
\begin{equation}\label{residue_closed_-m-2}
\Rsub{M}(-m-2)=\frac{o_{m-1}}{8m}\int_M\left(2{\Vert h\Vert}^2-{\vert H\vert}^2\right)\dx .
\end{equation}
\end{enumerate}
\end{proposition}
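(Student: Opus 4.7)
The plan is to reduce everything to the volume asymptotic \eqref{KP-expansion} and the residue formula \eqref{RsubMrhoddtpsix}. Set $\lambda\equiv 1$ and let $\psi_x(t)=\mathrm{Vol}(M\cap B_t(x))$. By \eqref{RsubMrhoddtpsix} the local residue at $z=-m-2j$ is
\[
\Rsub{M,x}^{\mathrm{loc}}(-m-2j)=\frac{1}{(m+2j-1)!}\,\psi_x^{(m+2j)}(0),
\]
and the global residue is the integral over $M$. So it suffices to read off the $t^{m+2j}$ coefficients of $\psi_x$ from the Karp-Pinsky expansion \eqref{KP-expansion} and multiply by the appropriate factorial.

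For (1), take $j=0$. The leading coefficient in \eqref{KP-expansion} is $o_{m-1}/m$, so $\psi_x^{(m)}(0)=o_{m-1}(m-1)!$ and hence $\Rsub{M,x}^{\mathrm{loc}}(-m)=o_{m-1}$. Integrating over $M$ gives $\Rsub{M}(-m)=o_{m-1}\mathrm{Vol}(M)$. For (4), take $j=1$. The $t^{m+2}$ coefficient in \eqref{KP-expansion} is $\frac{o_{m-1}}{8m(m+2)}(2\|h\|^2-|H|^2)$, so differentiating $m+2$ times produces $\frac{o_{m-1}(m+1)!}{8m}(2\|h\|^2-|H|^2)$, and dividing by $(m+1)!$ yields the desired local residue $\frac{o_{m-1}}{8m}(2\|h\|^2-|H|^2)$; then integrate.

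Parts (2) and (3) are specializations of (4) using the hypersurface identities. For a knot ($m=1$, $n=3$) one has $\|h\|^2=\kappa^2$, $|H|^2=\kappa^2$, so $2\|h\|^2-|H|^2=\kappa^2$; since $o_0=2$ we get $\frac{2}{8\cdot 1}\int_K\kappa^2=\frac{1}{4}\int_K\kappa^2$. For a surface ($m=2$, $n=3$) with principal curvatures $\kappa_1,\kappa_2$, one computes $2(\kappa_1^2+\kappa_2^2)-(\kappa_1+\kappa_2)^2=(\kappa_1-\kappa_2)^2$; since $o_1=2\pi$ this gives $\frac{2\pi}{16}\int_M(\kappa_1-\kappa_2)^2=\frac{\pi}{8}\int_M(\kappa_1-\kappa_2)^2$, as in \eqref{residue_surface_2}. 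The claim in (3) that this is the Willmore energy modulo the Euler characteristic follows from the algebraic identity $(\kappa_1-\kappa_2)^2=(\kappa_1+\kappa_2)^2-4\kappa_1\kappa_2$ together with Gauss-Bonnet $\int_M\kappa_1\kappa_2\,dv=2\pi\chi(M)$.

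There is no real obstacle here, since Karp-Pinsky does the analytic work and \eqref{RsubMrhoddtpsix} is already established; the proof is essentially bookkeeping of derivatives and the two hypersurface identities. The only non-bookkeeping assertion is the \M invariance in (3), which I would not prove here: it is a special case of the more general statement for even-dimensional closed submanifolds that is taken up in Theorem \ref{M-inv-res-closed} (referenced in the introduction via the discussion around \eqref{eq_symmetry_intro}), and for a surface in $\RR^3$ it is also the classical \M invariance of the Willmore functional combined with the topological invariance of $\chi(M)$.
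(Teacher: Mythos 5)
Your proof is correct and follows exactly the route the paper indicates: it states that \eqref{residue_closed_-m-2} follows from \eqref{RsubMrhoddtpsix} together with the Karp--Pinsky expansion \eqref{KP-expansion}, and parts (1)--(3) are the specializations you give (with the \M invariance in (3) deferred, as you do, to the classical Willmore invariance or to the general even-dimensional result). The factorial bookkeeping, the values $o_0=2$, $o_1=2\pi$, and the identity $2\|h\|^2-|H|^2=(\kappa_1-\kappa_2)^2$ for surfaces all check out.
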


\eqref{residue_closed_-m-2} follows from \eqref{RsubMrhoddtpsix} and \eqref{KP-expansion}, although it was not explicitly written in \cite{OS2}. 

\subsection{\M invariance}
Let $M$ be an $m$ dimensional closed submanifold of $\RR^n$. We show that if $m$ is even then the residue at $z=-2m$ is \M invariant. 
First consider the behavior of the energies and residues under homotheties $x\mapsto cx$ $(c>0)$. 
When $z$ is not a pole of $B_M$, $B_M(z)$ is a homogeneous function of $M$ with degree $z+2m$, i.e. $B_{cM}(z)=c^{z+2m}B_M(z)$. 
When $z$ is a pole of $B_M$, the residue $\Res_M$ inherits the homogeneity property (Lemma 3.9 \cite{OS2}), whereas the regularized energy $E_M(z)$ does not unless the residue vanishes identically 
(Proposition 3.10 \cite{OS2}). 

We show that the same phenomenon occurs for \M invariance. 
The degree of homogeneity implies that the \M invariance can occur only when $z=-2m$. We have shown that if $m$ is odd then $z=-2m$ is not the pole of $B_M$ and $B_M(-2m)$ is \M invariant (Proposition 3.12 \cite{OS2}).

\begin{theorem}\label{M-inv-res-closed}
Suppose $m$ is even. 
The residue at $z=-2m$ is \M invariant, i.e. 
$\Rsub{T(M)}(-2m)=\Rsub{M}(-2m)$ for any $m$-dimensional closed submanifold $M$ of $\RR^n$ and 
for any \M transformation $T$ as far as $T(M)$ remains compact. 
\end{theorem}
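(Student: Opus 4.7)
The plan is to reduce M\"obius invariance to invariance under a single inversion and then exploit the fact that the conformal factor from inversion collapses to $1$ at the critical exponent $z=-2m$. The M\"obius group of $\RR^n\cup\{\infty\}$ is generated by Euclidean isometries, homotheties, and one inversion. Isometries preserve $B_M(z)$ pointwise, and under $x\mapsto cx$ the substitution formula gives $B_{cM}(z)=c^{z+2m}B_M(z)$ for $\R z>-m$, which persists by analytic continuation; evaluating at $z=-2m$ gives $\Rsub{cM}(-2m)=\Rsub{M}(-2m)$. So it suffices to treat the inversion $T(x)=x/|x|^2$, and by composing with an isometric translation I may assume $0\notin M$, which is equivalent to $T(M)$ being compact.

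For this $T$ I would use the classical identities $|T(x)-T(y)|=|x-y|/(|x|\,|y|)$ and the induced volume change $dv(T(x))=|x|^{-2m}\,\dx$ to obtain, for $\R z>-m$,
\[
B_{T(M)}(z)=\iint_{M\times M}|x-y|^z\,|x|^{-(z+2m)}|y|^{-(z+2m)}\,\dx\,\dy.
\]
Since $M$ is compact and bounded away from $0$, the weight $\omega(x,y,z):=|x|^{-(z+2m)}|y|^{-(z+2m)}$ is jointly smooth on $M\times M$ and entire in $z$, with $\omega(x,y,-2m)\equiv 1$. Writing $\omega(x,y,z)=1+(z+2m)\,G(x,y,z)$ with $G$ again smooth on $M\times M$ and entire in $z$, I obtain
\[
B_{T(M)}(z)=B_M(z)+(z+2m)\,\Phi(z),\qquad \Phi(z):=\iint_{M\times M}|x-y|^z\,G(x,y,z)\,\dx\,\dy,
\]
valid for $\R z>-m$ and extended meromorphically to $\CC$ by analytic continuation.

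To conclude I would argue that $(z+2m)\,\Phi(z)$ is holomorphic at $z=-2m$, hence contributes $0$ to the residue, giving $\Rsub{T(M)}(-2m)=\Rsub{M}(-2m)$. For each fixed $z$ near $-2m$, $G(\,\cdot\,,\,\cdot\,,z)$ is a smooth weight on $M\times M$, and the regularization argument of Subsection 2.2---most conveniently the graph-based method of 2.2.2---applied with this $z$-parametrized weight shows that $\Phi$ has at most a simple pole at each $z=-m-2j$, the $z$-dependence entering only holomorphically into the Maclaurin-in-$r$ coefficients $a_{G(\cdot,\cdot,z),2j}$ analogous to those in \eqref{def_a_la_k}.

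The hard part will be verifying this final claim: namely, that inserting a weight $G$ which depends holomorphically on $z$ into the machinery of Subsection 2.2 does not worsen the simple poles into double poles. The essential observation is that the graph-method expansion still produces an even series in $r$ whose coefficients are entire in $z$, so the poles of $\Phi$ come solely from the factor $r^{z+m-1}$ and remain simple. Making this uniformity precise across a neighbourhood of $z=-2m$ is the technical core of the argument; once established, the identity $\Rsub{T(M)}(-2m)=\Rsub{M}(-2m)$ follows immediately from the displayed decomposition.
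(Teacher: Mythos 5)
Your proposal is correct and follows essentially the same route as the paper: reduce to a single inversion with $0\notin M$, write $B_{T(M)}(z)-B_M(z)$ as a weighted energy with weight $\left(|x|\,|y|\right)^{-(z+2m)}-1$, and exploit the fact that this weight vanishes at $z=-2m$ while weighted energies have only simple poles. The only difference is packaging: you factor the weight as $(z+2m)G(x,y,z)$ and invoke meromorphy of $\Phi$ with a $z$-dependent weight, whereas the paper proves the equivalent statement $\lim_{z\to-2m}(z+2m)\bigl(B_{T(M)}(z)-B_M(z)\bigr)=0$ directly from the uniform $C^\infty$ convergence of the weight to $0$ via Lemma \ref{even_odd}(4) — which is exactly the tool that closes the technical gap you flag at the end.
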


The proof is almost same as that of Proposition 3.12 of \cite{OS2} 
except for \eqref{z+2mBTM-BM} and the final step (iii) below. 
The difference is that in \cite{OS2} we studied the behavior of $B_M(z)$ but this time we consider the residue, which can be expressed by $\lim_{z\to-2m}(z+2m)B_M(z)$, so we have a better convergence situation when $z$ goes to $-2m$. 
We describe the entire proof here for the convenience of the reader. 

\begin{proof}
Remark that we have only to discuss the case when $0\not \in M$ and $T$ is an inversion in the unit sphere with center the origin. 

Let $\widetilde M, \tilde x, \tilde y$ denote the images by $T$ of $M,x,y$ respectively. 
Since 
\[
|\tilde x-\tilde y|=\frac{|x-y|}{|x|\,|y|},\quad dv({\tilde x})=\frac{\dx }{|x|^{2m}}, \quad dv({\tilde y})=\frac{\dy }{|y|^{2m}},
\]
we have for $\R z>-m$ 
\begin{align}
 B_{\widetilde M}(z)- B_{ M}(z)
&=\int_{M\times M}|x-y|^z \frac1{|x|^{z+2m}|y|^{z+2m}}\, \dx \dy 
-\int_{M\times M}|x-y|^z\, \dx \dy  \notag \\[2mm]
&= \int_{M\times M}|x-y|^z \left[\left(\frac1{|x|\,|y|}\right)^{z+2m}-1\right] \dx \dy . \notag 
%
\end{align}
Let $\delta >0$ be such that the spheres $\partial B_t(x)$ are transverse to $M$ for all $x$ in $M$ and all $t\in (0,\delta ]$. 
Put 
\[\begin{array}{rcl}
\la_z(x,y)&=&\displaystyle \left(\frac1{|x|\,|y|}\right)^{z+2m}-1, \\[4mm]
\Psi_z(t)&=&\displaystyle \psi_{\la_z}(t)=\iint_{(M\times M)\cap \Delta_t}\left[\left(\frac1{|x|\,|y|}\right)^{z+2m}-1\right]\,\dx \dy ,
\end{array}
\]
where $\Delta_t=\{(x,y)\in\RR^n\times\RR^n\,:\,|x-y|\le t\}$. 
Then $\Psi_z(t)$ is smooth on $[0,\delta ]$. 

By putting $\varphi(t)=\Psi_z'(t)$ and $k=2m$ in \eqref{Iphiz}, 
\begin{equation}\label{BtildeM-BM}
\begin{array}{l}
\displaystyle B_{\widetilde M}(z)- B_{ M}(z)=\displaystyle \left(\int_0^\delta  t^z  \Psi_z'(t) dt
+\iint_{(M\times M)\setminus \Delta_\delta } |x-y|^z\, \la_z(x,y)\dx \dy  \right) 
\\[4mm]
=\displaystyle \int_0^\delta  t^z\left(\Psi_z'(t)-\sum_{j=0}^{2m-1}\frac{\Psi_z^{(j+1)}(0)}{j!}\,t^j \right)dt  
+\sum_{j=1}^{2m}\frac{\Psi_z^{(j)}(0)\, \delta ^{z+j}}{(j-1)!\,(z+j)}  
+\iint_{(M\times M)\setminus \Delta_\delta } |x-y|^z \,\la_z(x,y)\dx \dy . 
\end{array}
\end{equation}

\medskip  
Since 
\begin{equation}\label{residue=limit}
\Rd{-2m}{M}=\lim_{z\to-2m}(z+2m)B_M(z),
\notag
\end{equation}
we have only to show 
\begin{equation}\label{z+2mBTM-BM}
\lim_{z\to-2m}(z+2m)(B_{\widetilde M}(z)-B_M(z))=0. 
\end{equation}
Since all the derivatives of $\la_z$ converge uniformly to $0$ as $z$ approaches $-2m$, Lemma \ref{even_odd} implies 
\begin{equation}\label{eq_lim_sup}
\lim_{z\to-2m} \sup_{0\le t\le \delta }\left|\Psi_z^{(i)}(t)\right|=0 \qquad (\forall i). 
\end{equation}

(i) The first term in the right hand side of \eqref{BtildeM-BM} goes to $0$ as $z$ approaches $-2m$ since 
\[
 \left|\int_0^\delta  t^z\left(\Psi_z'(t)-\sum_{j=0}^{2m-1}\frac{\Psi_z^{(j+1)}}{j!}\,t^j
\right)\,dt\right| \displaystyle\leq \sup_{0\le t\le \delta }\left|\Psi_z^{(2m+1)}(t)\right|\frac{1}{(2m)!}\int_0^\delta \,t^{z+2m}\,dt.
\]

\smallskip
(ii) The third term in the right hand side of \eqref{BtildeM-BM} also goes to $0$ as $z$ approaches $-2m$ since $\la_z$ converges uniformly to $0$ on $M\times M$ as $z$ approaches $-2m$. 

\smallskip
(iii) The second term in the right hand side of \eqref{BtildeM-BM}, if multiplied by $z+2m$, goes to $0$ as $z$ approaches $-2m$ since 
\[
(z+2m)\sum_{j=1}^{2m}\frac{\Psi_z^{(j)}(0)\, \delta ^{z+j}}{(j-1)!\,(z+j)}
=(z+2m)\sum_{j=1}^{2m-1}\frac{\Psi_z^{(j)}(0)\, \delta ^{z+j}}{(j-1)!\,(z+j)}
+\frac{\Psi_z^{(2m)}(0)\, \delta ^{z+2m}}{(2m-1)!}
\]
and $\Psi_z^{(2m)}(0)$ goes to $0$ as $z$ to $-2m$ by \eqref{eq_lim_sup}. 

Now (i), (ii) and (iii) complete the proof of \eqref{z+2mBTM-BM}. 
\end{proof}

As we mentioned at the beginning of this section, if $\Res_M(-2m)$ does not vanish for all $M$, the regularized energy $E_M(-2m)$ is not even scale invariant. 
We conjecture that for each even number $m$ there is an $m$-dimensional closed submanifold $M$ such that $\Res_M(-2m)\ne0$, which holds if $m=2$ (\cite{OS2}).

\subsection{$\nu$-weighted energy and residue}
%
Assume $M$ is an oriented closed submanifold of $\RR^n$. 
When $M$ is a hypersurface bounding a compact body $\dom$, the meromorphic energy function can be expressed by a double integral on $M$ as \eqref{IOmegaboundary} as we will see in Section \ref{section_compact_bodies}. 
We generalize it to the case when $M$ has codimension greater than $1$, which we will deal with later in Subsections \ref{subsec_Weyl} and \ref{subsec_Graham-Witten}. 
For this purpose we generalize \[\la_\nu(x,y)=\langle \uon{x}, \uon{y}\rangle,\] where $\uon{x}$ is the unit outer normal vector to $\pO$,  
by the inner product of $m$-vectors as follows. 

\medskip
Let $\Pi$ and $\Pi'$ be $m$-dimensional oriented vector subspaces of $\RR^n$. 
Let $(v_1,\dots,v_m)$ and $(v_1',\dots,v_m')$ be ordered orthonormal bases of $\Pi$ and $\Pi'$ respectively that give the positive orientations of $\Pi$ and $\Pi'$. 
Define
\begin{equation}\label{product_planes}
\langle \Pi, \Pi'\rangle
=\ip{v_1\wedge\dots\wedge v_m}{v'_1\wedge\dots\wedge v'_m}
=\det (\langle v_i, v_j'\rangle). 
\end{equation}

We remark that it is independent of the choice of ordered orthonormal bases. 

\begin{definition}\label{nu-weighted} \rm 
Define the {\em $\nu$-weighted potential, energy and (local) residue} of $M$ (at $x$) by substituting $\lambda$ in Definition \ref{def_B_M_etc}  
by \[\lambda_\nu(x,y)=\langle T_xM, T_yM\rangle,\] where the orientation of the tangent spaces are induced from that of $M$, and denote them by $B_{M,\nu}(z)$ and $\Res_{M,\nu}(z)$. 
\end{definition}

We remark that when $M$ is the boundary of a compact body $\dom$ in $\RR^n$, 
\begin{equation}\label{BR_nu_BR_dom}
B_{M,\nu}(z)=-z(z+n-2)B_\dom(z-2), \quad \Res_{M,\nu}(z)=-z(z+n-2)\Res_\dom(z-2) \quad 
\quad  (z\ne0,-n+2).
\end{equation}

When $M$ is a knot $K$ in $\RR^3$ and $z=-2$, $B_{K,\nu}(-2)$ is equal to the ``energy'' studied in \cite{IN14} and \cite{OS1} up to multiplication by a constant. 
For general $z$, $B_{K,\nu}(z)$ was studied in \cite{Oinductance}.

\begin{lemma}\label{lemma_la_nu_times_A_f_=1}
Let $M$ be an $m$ dimensional oriented closed submanifold of $\RR^n$ and $x$ be a point on $M$. 
Suppose $x=0$, $T_xM=\RR^m$, and $M$ is expressed near $x=0$ as a graph of a function $f\colon\RR^m\to\RR^{n-m}$. 
The area density $A_f$ given by \eqref{def_A_f} satisfies 
\begin{equation}\label{la_nu_times_A_f_=1}
\la_\nu(x,y)A_f(y)=1
\end{equation}
for a point $y$ in a neighbourhood of $x$. 
\end{lemma}

\begin{proof}
Recall $A_f=\sqrt{g}$ (see Subsection \ref{preliminary_calculation_graph}). 
By definition $\la_\nu(0,y)=\langle T_0M, T_yM\rangle$, where $\langle \,\cdot\,, \,\cdot\,\rangle$ is given by \eqref{product_planes}. 
Since $\{{\partial y}/{\partial u_i}=(\vect e_i,f_i)\}_{i=1,\dots,m}$ 
is a (not necessarily orthonormal) basis of $T_yM$ and $\{v_i=(\vect e_i,0)\}_{i=1,\dots,m}$ 
is an orthonormal basis of $T_0M$, where $\vect e_i$ are the standard unit vectors of $\RR^m$, 
using a matrix of change of basis of $T_yM$ we have 
\[
\la_\nu(0,y)=\langle T_0M, T_yM\rangle=\frac1{A_f}.
\]
\end{proof}

\subsection{Scalar curvature and mean curvature in terms of residues}\label{subsect_scalar_curv_residues}
%
\begin{lemma}\label{lem_nu_Res_-m-2}
Let $M$ be an $m$ dimensional submanifold of $\RR^n$ and $x$ be a point on $M$. 
Assume that $M$ is locally expressed near $x$ as a graph of a smooth function $f$ from $T_xM\cong\RR^m$ to $\RR^{n-m}$. 
Then the local $\nu$-weighted residue and the local residue at $-m-2$ are given by 
\begin{eqnarray}
\Res_{M,\nu,x}^{\rm loc}(-m-2)&=&\displaystyle \frac{o_{m-1}}m\biggl(
-\frac38\sum_i \ip{f_{ii}}{f_{ii}}-\frac18\sum_{i\ne j} \ip{f_{ii}}{f_{jj}}-\frac14\sum_{i\ne j}\ip{f_{ij}}{f_{ij}} \biggr), \label{R_M_lambda_loc_-m-2}\\
\Res_{M,x}^{\rm loc}(-m-2)&=&\displaystyle \frac{o_{m-1}}m\biggl(
\phantom{-}\frac18\sum_i \ip{f_{ii}}{f_{ii}}-\frac18\sum_{i\ne j} \ip{f_{ii}}{f_{jj}}+\frac14\sum_{i\ne j} \ip{f_{ij}}{f_{ij}} \biggr), \label{R_M_loc_-m-2}
\end{eqnarray}
where $o_{m-1}$ is the volume of the unit $(m-1)$-sphere. 
\end{lemma}

\begin{proof}
Put $x=0$. 
Lemma \ref{lemma_la_nu_times_A_f_=1} implies that $a_{\nu,2}(-m-2;w)$ and $a_{2}(-m-2;w)$, where $w\in S^{m-1}$, defined in Subsection \ref{method_graph} are the coefficients of $r^2$ of the Maclaurin series expansion in $r$ of 
\begin{eqnarray}
\xi_\nu(r,w,-m-2)&=&\displaystyle {\biggl(1+\frac{|f(rw)|^2}{r^2}\biggr)}^{-(m+2)/2} ,\notag \\[1mm]
\xi(r,w,-m-2)&=&\displaystyle \xi_\nu(r,w,-m-2) A_f(rw) \label{xi_r_w_-m-2}
\end{eqnarray}
respectively. 
By \eqref{def_xi}, \eqref{def_a_la_k}, and \eqref{local_residue_graph_FV} the local $\nu$-weighted residue and the local residue are given by 
\[
\Res_{M,\nu,x}^{\rm loc}(-m-2)=\int_{S^{m-1}}
a_{\nu,2}(-m-2;w)\,dv(w), \quad
\Res_{M,x}^{\rm loc}(-m-2)=\int_{S^{m-1}}
a_{2}(-m-2;w)\,dv(w).
\]

If we put $A_k(w)$ to be the coefficient of $r^{k}$ of the Maclaurin series expansion of ${|f(rw)|}^2$, then  
\[
A_1(w)=A_2(w)=A_3(w)=0, \> A_4(w)=\frac14\sum_{i,j,k,l}\ip{f_{ij}}{f_{kl}} w_iw_jw_kw_l. 
\]
which implies that 
\[a_{\nu,2}(-m-2;w)=-\frac{m+2}2A_4(w). \]
Since the integration on $S^{m-1}$ of a term containing odd powers of $w_i$ vanishes, 
\begin{eqnarray}
\Res_{M,\nu,x}^{\rm loc}(-m-2)&=&\displaystyle -\frac{m+2}8\int_{S^{m-1}}\biggl(
\sum_i \ip{f_{ii}}{f_{ii}} \>w_i^4+\sum_{i\ne j}\ip{f_{ii}}{f_{jj}}\>w_i^2w_j^2
+2\sum_{i\ne j}\ip{f_{ij}}{f_{ij}}\>w_i^2w_j^2
\biggr)dv(w), \hspace{1.1cm}{} \label{Res_M_nu_-m-2_f**}
\end{eqnarray}
and by \eqref{def_A_f} and \eqref{xi_r_w_-m-2}, 
\begin{equation}
\Res_{M,x}^{\rm loc}(-m-2)=\Res_{M,\nu,x}^{\rm loc}(-m-2)+\frac12\int_{S^{m-1}}\sum_{i,j} \ip{f_{ij}}{f_{ij}}\>w_j^2 \,dv(w).
\notag
\end{equation}
The conclusion follows from substituting the following moment integrals 
\begin{equation}\label{int_S_n-2_w^2}
\int_{S^{m-1}}w_i^2\,dv=\frac{o_{m-1}}{m},
\end{equation}
\begin{equation}\label{moment_integral_4_on_S3}
\int_{S^{m-1}}w_i^{\,4}\,dv=\frac{3\,o_{m-1}}{m(m+2)}, \quad 
\int_{S^{m-1}}w_i^{\,2}w_j^{\,2}\,dv=\frac{o_{m-1}}{m(m+2)} \quad (i\ne j)
\end{equation}
(cf. Theorem 1.5 in Appendix A of \cite{G}).
\end{proof}

\eqref{Sc_at_0}, \eqref{mean_curv_vect_H} and Lemma \ref{lem_nu_Res_-m-2} imply 

\begin{theorem}\label{thm_Sc_residue}
Let $M$ be an $m$ dimensional closed submanifold of $\RR^n$. 
The scalar curvature and the squared norm of the mean curvature vector can be expressed by a linear combination of the local residue and the $\nu$-weighted local residue (cf. Definition \ref{nu-weighted}) as 
\begin{eqnarray}
\Sc(x)&=&\displaystyle -\frac{2m}{o_{m-1}}\Bigl(\Res_{M,\nu,x}^{\rm loc}(-m-2)+3\Res_{M,x}^{\rm loc}(-m-2)\Bigr), \label{scalar_curv_residue}\\
{|H|}^2(x)&=&\displaystyle -\frac{4m}{o_{m-1}}\Bigl(\Res_{M,\nu,x}^{\rm loc}(-m-2)+\Res_{M,x}^{\rm loc}(-m-2)\Bigr). \label{sq_norm_mean_curv_residue}
\end{eqnarray}
\end{theorem}

When $M$ is a hypersurface, the mean curvature can be expressed by relative residue as we will see in \eqref{local_relative_residue_mean_curv}. 
When $M$ is a surface in $\RR^3$, 
by substituting $m=2$ to \eqref{sq_norm_mean_curv_residue} 
and integrating it on $M$ we see that the classical Willmore energy of $M$ can be expressed as a linear combination of the residues as 
\begin{equation}\label{residue_Willmore_m=2}
\frac14\int_M H^2\dx =-\frac1\pi(\Res_{M,\nu}(-4)+\Res_{M}(-4)). \notag
\end{equation}
We will see a kind of generalization of the above  
in Theorem \ref{prop_GW-residues=W+PCE}.

\subsection{\M invariance}

Let us study the \M invariance of $B_{M,\nu}(-2m)$ for odd $m$ and $\Res_{M,\nu}(-2m)$ for even $m$. 

\begin{lemma}\label{inversion_codim>1}
Let $M$ be an $m$ dimensional submanifold of $\RR^n$ and $x$ be a point on $M$ $(x\ne0)$. 
Let $I$ be an inversion in the unit sphere with center the origin. 
Put $\widetilde M=I(M)$ and $\tilde x=I(x)$. 
Suppose $M$ (or $\widetilde M$) can be locally expressed as graph of $f\colon T_xM\to(T_xM)^\perp$ near $x$ (or respectively, $\tilde f\colon T_{\tilde x}\widetilde M\to\big(T_{\tilde x}\widetilde M\,\big)^\perp$ near $\tilde x$). 

Let $\{v_1,\dots, v_n\}$ be an orthonormal basis of $T_x\RR^n$ such that $\{v_1,\dots, v_m\}$ span $T_xM$. 
Put $\tilde v_i=I_\ast v_i/|I_\ast v_i|$. Then  $\{\tilde v_1,\dots, \tilde v_n\}$ is an orthonormal basis of $T_{\tilde x}\RR^n$ such that $\{\tilde v_1,\dots, \tilde v_m\}$ span $T_{\tilde x}\widetilde M$. 
Let $(u_1, \dots, u_n)$ (or $(\tilde u_1, \dots, \tilde u_n)$) be the coordinates of $T_x\RR^n$ (or resp. $T_{\tilde x}\RR^n$) with respect to the frame $(v_1,\dots, v_m)$ 
(or resp. $(\tilde v_1,\dots, \tilde v_m)$). 

Then at the origin we have
\begin{equation}\label{inversion_f_ij}
\frac{\partial^2\tilde f^\sigma}{\partial \tilde u_i\partial \tilde u_j}
={|x|}^2\frac{\partial^2f^\sigma}{\partial u_i\partial u_j}
+2\ip{v_\sigma}{x}\,\delta_{ij}
\notag
\end{equation}
for $1\le i,j\le m$ and $m+1\le \sigma\le n$. 
\end{lemma}

\begin{proof}
First note that 
\[\tilde v_i=v_i-2\frac{\ip{v_i}{x}}{{|x|}^2}x.\]
Define $\bar f\colon T_xM\to\RR^n$ by 
$\bar f(u)=x+u+f(u)$ $\>(u\in T_xM)$ and 
$\hat f\colon T_{\tilde x}\widetilde M\to\RR^n$ by $\hat f(\tilde u)=\tilde x+\tilde u+\tilde f(\tilde u)$ $\>(\tilde u\in T_{\tilde x}\widetilde M)$. 
Let $\phi={\hat f}^{-1}\circ I \circ \bar f$ be a bijection form a neighbourhood of $0$ of $T_xM$ to a neighbourhood of $0$ of $T_{\tilde x}\widetilde M$. 
Express both $\tilde u$ and $\tilde f(\tilde u)$ by functions of $u=\phi^{-1}(\tilde u)$ as 
\[
\begin{array}{rcl}
\tilde u_i&=&\displaystyle \frac{u_i}{{|\bar f(u)|}^2}+\ip{v_i}{x}\,\frac{{|u|}^2+{|f(u)|}^2}{{|x|}^2{|\bar f(u)|}^2} \quad(1\le i\le m),\\[4mm]
\tilde f^\sigma(\tilde u)&=&\displaystyle \frac{f^\sigma(u)}{{|\bar f(u)|}^2}+\ip{v_\sigma}{x}\,\frac{{|u|}^2+{|f(u)|}^2}{{|x|}^2{|\bar f(u)|}^2} \quad (m+1\le\sigma\le n),
\end{array}
\]
and use 
\[
\left(\pd{u_i}{\tilde u_j}\right)\left(\pd{\tilde u_i}{u_j}\right)=E, \>\>
\pd{u_i}{\tilde u_j}(0)={|x|}^2\delta_{ij}, \>\>
f(0)=\pd{f}{u_i}(0)=0
\]
to compute 
\[
\frac{\partial^2\tilde f^\sigma}{\partial \tilde u_i\partial \tilde u_j}
=\sum_{a,b}\pd{u_a}{\tilde u_i}\pd{}{u_a}\left(\pd{u_b}{\tilde u_j}\cdot\pd{\tilde f^\sigma}{u_b}\right),
\]
we obtain the conclusion. 
\end{proof}

\begin{proposition}\label{Mobius_inv_R_nu}
The $\nu$-weighted energy or residue, depending on the parity of $m$, at $z=-2m$ is \M invariant if one of the following conditions is satisfied.
\begin{enumerate}
\item $m=1$.
\item $m=2$.
\item $M$ is a boundary of a compact body $\Omega$. 
\end{enumerate}\end{proposition}

\begin{proof}
(1) was proved by \cite{OS1} when $n=3$ and by \cite{IN14} analytically in general case. 

\smallskip
(3) By \eqref{BR_nu_BR_dom} the conclusion follows form the \M invariance of the energy and residue of compact bodies, which is proved by Proposition 3.12 of \cite{OS2} if $m$ is odd and by Theorem \ref{thm_M_inv_body}, which we will state later, if $m$ is even. 

\smallskip
(2) Substituting $m=2$ to \eqref{R_M_lambda_loc_-m-2} we obtain 
\begin{equation}\label{Res_M_nu_x_-4}
\Res_{M,\nu,x}^{\rm loc}(-4)=-\frac\pi8
\Bigl[3\big(\ip{f_{11}}{f_{11}}+\ip{f_{22}}{f_{22}}\big)+2\ip{f_{11}}{f_{22}}
+4\ip{f_{12}}{f_{12}}
\Bigr].
\end{equation}
Putting $m=2$ in \eqref{Sc_at_0}, the scalar curvature at the origin is given by 
\[\Sc
\eao 2\big(\ip{f_{11}}{f_{22}}-\ip{f_{12}}{f_{12}}\big),\]
which implies 
\[
\Res_{M,\nu,x}^{\rm loc}(-4)=-\frac\pi8
\Bigl[3\ip{f_{11}-f_{22}}{f_{11}-f_{22}}+12\ip{f_{12}}{f_{12}}+4\Sc
\Bigr].
\]
Lemma \ref{inversion_codim>1} implies $\ip{f_{11}-f_{22}}{f_{11}-f_{22}}\,dv$ and 
$\ip{f_{12}}{f_{12}}\,dv$ are pointwise \M invariant. 
On the other hand, by the Gauss-Bonnet theorem 
$\int_M\Sc\,dv=4\pi\chi(M)$. 
Thus the proof is completed. 
\end{proof}

The author does not know whether the \M invariance of $\Res_{M,\nu}(-2m)$ holds in general.

\subsection{Necessary order of differentiation to obtain residues}\label{subsub_order_diff}
Suppose $M$ is locally expressed as a graph of a function $f$ defined on a tangent space as in Subsection \ref{method_graph}. 
A priori, \eqref{def_xi}, \eqref{def_a_la_k}, and \eqref{local_residue_graph_FV} imply that we need the derivatives of $f$ of order up to $2j$ to express the local residue at $z=-m-2j$ ($j=0,1,2,\dots$). 

\begin{conjecture}\label{conj_order_diff}\rm 
We only need the derivatives of order up to $j+1$ to express the ($\nu$-weighted) residue of a closed submanifold $M$ at $z=-m-2j$ ($j\ge 2$).
\end{conjecture}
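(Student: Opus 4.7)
The plan is to combine the graph-representation formulae \eqref{def_xi}--\eqref{def_a_la_k} with integration by parts on the closed manifold $M$. First I would expand
\[
\Bigl(1+\tfrac{|f(rw)|^{2}}{r^{2}}\Bigr)^{z/2}
=\sum_{p\ge 0}\binom{z/2}{p}\Bigl(\tfrac{|f(rw)|^{2}}{r^{2}}\Bigr)^{\!p},
\]
write $|f(rw)|^{2}/r^{2}=\sum_{l\ge 2}g_{l}(w)r^{l}$ with $g_{l}(w)=\sum_{k+k'=l+2,\ k,k'\ge 2}\langle a_{k}(w),a_{k'}(w)\rangle$ and $a_{k}(w)=f^{(k)}(0)[w]^{k}/k!$, and in the unweighted case expand the area factor $A_{f}(rw)$ similarly (by Lemma~\ref{lemma_la_nu_times_A_f_=1} the $\nu$-weighted case simply has $\lambda_{\nu}A_{f}\equiv 1$). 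Pulling out the coefficient of $r^{2j}$ and integrating over $S^{m-1}$ with the standard $SO(m)$-moment formulas, $\Res^{\rm loc}_{M,\lambda,x}(-m-2j)$ becomes a finite sum of scalar contractions of tensors $f^{(k_{i})}(0)$ whose orders $k_{i}\ge 2$ sum to $2j+2p$ for some $1\le p\le j$ (with a bounded number of additional $f^{(\ge 2)}$ factors coming from the $A_{f}$ expansion in the unweighted case).

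Second, I would pass to the integrated residue. In adapted coordinates at $x\in M$, with $T_{x}M=\RR^{m}$ and the Christoffel symbols of $M$ vanishing at $x$, the identity $f^{(k)}(0)=\nabla^{k-2}h_{x}$ holds modulo terms depending only on jets of $f$ of strictly lower order. Thus each summand contributing to $\Res_{M,\lambda}(-m-2j)$ equals, up to a strictly-lower-order remainder, the integral on $M$ of a scalar contraction of tensors $\nabla^{s_{i}}h$ with $s_{i}=k_{i}-2\ge 0$ and $\sum s_{i}=2j-2p$. Applying the divergence theorem on closed $M$, I would transfer covariant derivatives one at a time from the factor with the largest $s_{i}$ to the factor with the smallest. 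Since the $2p$ factors have $s_{i}$ summing to $2j-2p$, a simple averaging shows one can always reach a distribution with every $s_{i}\le j-1$, at which stage each factor involves derivatives of $f$ of order at most $j+1$. The Ricci commutators triggered when rearranging covariant indices trade one $\nabla$ on $h$ for a Riemann factor (order $2$ in $f$), hence strictly lower the effective derivative order of $f$ and are absorbed by recursion.

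Third, I would iterate. After one reduction the resulting integrand is again a sum of scalar contractions of tensors $\nabla^{s}h$ and Riemann tensors of the same form, so the counting argument applies again, and every step either strictly lowers the maximum $s_{i}$ or strictly decreases $\sum s_{i}$. The recursion therefore terminates after finitely many iterations and yields $\Res_{M,\lambda}(-m-2j)$ as an integral over $M$ of a polynomial in the induced metric, the Riemann curvature, and covariant derivatives of the second fundamental form of order at most $j-1$, i.e.\ in derivatives of $f$ of order at most $j+1$.

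The main obstacle will be controlling the commutator corrections uniformly: each application of the Ricci identity produces a $\nabla^{a}R\cdot \nabla^{b}h\cdots$ remainder whose $f$-derivative order must stay $\le j+1$ throughout the recursion, and this bookkeeping is delicate because successive IBP steps both transfer $\nabla$'s across factors and trigger fresh commutators. A related technical difficulty is making the translation $f^{(k)}(0)\leftrightarrow\nabla^{k-2}h_{x}$ genuinely clean across adapted charts at different points of $M$, so that the Christoffel corrections can be consistently absorbed into the lower-order remainder without disturbing the counting. Once these are handled the IBP scheme is essentially mechanical.
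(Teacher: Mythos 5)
The statement you are addressing is left as a \emph{conjecture} in the paper: the text proves it only for $j=2$ (Theorem \ref{prop_residue_H_Sc_graph} and its corollary), by explicitly computing the $f_{\ast\ast\ast\ast}$-terms of the two local residues at $z=-m-4$ and exhibiting concrete correction terms ($\Delta{|H|}^2$ and $\Delta\Sc$) whose integrals vanish on a closed manifold; the case $m=1$ is quoted from \cite{Oinductance}. Your proposal is a plan for the general case, and its arithmetic core is sound: a term of $a_{\la,2j}$ built from $N$ inner-product factors involves $2N$ derivatives $f^{(k_i)}(0)$ with $k_i\ge2$ and $\sum_i(k_i-2)=2j-2N\le 2j-2$, so no two factors can both carry excess $s_i\ge j$, and integration by parts can in principle drive every factor down to derivative order $\le j+1$. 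But as written this is a strategy rather than a proof, and the two points you yourself flag as ``delicate'' are exactly where the argument is missing, not merely technical.

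First, the passage from the Maclaurin coefficients $f^{(k)}(0)$ to complete contractions of $\nabla^{k-2}h$ is asserted ``modulo lower-order terms'' but never established. For $k\ge3$ the coefficient $f^{(k)}(0)$ is not a tensor, $\nabla^{k-2}h$ is not totally symmetric (Codazzi and Ricci identities intervene), and the index patterns produced by the $S^{m-1}$ moment integrals must be shown to assemble into globally defined natural scalars before the divergence theorem can be applied on $M$; this requires an invariant-theory argument (or an explicit computation, which is what the paper does for $j=2$) that you do not supply. Second, integration by parts does not ``transfer a derivative from the largest factor to the smallest'': by the Leibniz rule it scatters the derivative over all remaining factors, and every reordering of covariant indices produces Riemann-tensor corrections which, by the Gauss equation, are quadratic in $h$ and therefore change the factor count $N$ on which your averaging bound depends. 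A termination argument for the resulting recursion (a well-founded order on the multisets $(s_i)$ together with the number of factors, say) is needed and is not given. Until these two steps are carried out, the proposal does not settle the conjecture; it formalizes the heuristic that motivates it.
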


\noindent
The case when $m=1$ was proved in \cite{Oinductance}. 
We will show that the conjecture holds when $j=2$. 

\begin{theorem}\label{prop_residue_H_Sc_graph}
Let $M$ be an $m$-dimensional closed submanifold of $\RR^n$ and $x$ be a point on $M$. 
Suppose 
$M$ can locally be expressed as a graph of a function $f\colon T_xM\to (T_xM)^\perp$. 
Then 
\begin{align}
\Res_{M,\nu,x}^{\rm loc}(-m-4)& +\frac{o_{m-1}}{32\,m(m+2)}\left(5\Delta {|H|}^2-4\Delta \Sc\right),    \label{residue_M^4_nu_-8_modified} \\[1mm]
\Res_{M,x}^{\rm loc}(-m-4)& -\frac{o_{m-1}}{32\,m(m+2)}\left(3\Delta {|H|}^2-4\Delta \Sc\right)   \label{residue_M^4_-8_modified}
\end{align}
can be expressed in terms of the derivatives of $f$ of order up to $3$. 
\end{theorem}

We will prove this theorem by taking the proof of Lemma \ref{lem_nu_Res_-m-2} one step further.

\begin{proof}
We first compute $\Res_{M,\nu,x}^{\rm loc}(-m-4)$. 
It is given by $\Res_{M,\nu,x}^{\rm loc}(-m-4)=\int_{S^{m-1}}a_{\nu,4}(w)\,dv$, 
where $a_{\nu,4}(w)$ is the coefficient of $r^{4}$ of the Maclaurin series expansion in $r$ of 
\[
\xi_{\nu}(r,w,-m-4)={\biggl(1+\frac{{|f(rw)|}^2}{r^2}\biggr)}^{{-({m+4})/{2}}}
\]
(cf. Lemma \ref{lemma_la_nu_times_A_f_=1}). 
Let $A_k(w)$ be the coefficient of $r^{k}$ of the Maclaurin series expansion of ${|f(rw)|}^2$, then 
\[
a_{\nu,4}(w)=\frac{(m+4)(m+6)}{8}{A_4(w)}^2-\frac{m+4}2A_6(w).
\]
The $f_{****}$ terms of $a_{\nu,4}(w)$ come from $-\frac{m+4}2A_6(w)$, which are given by 
\begin{equation}
-\frac{m+4}{48}\,
\ip{f_{ij}}{f_{abcd}}\,w_iw_jw_aw_bw_cw_d . \notag
\end{equation}
Since the integral on $S^{m-1}$ of a term containing odd powers of $w_i$ vanishes, we may assume that $\{i,j,a,b,c,d\}=\{i,i,a,a,c,c\}$. 
Computation using 
\begin{equation}\label{moment_integral_6_on_S3}
\begin{array}{c}
\displaystyle 
\int_{S^{m-1}}w_i^{\,6}\,dv=\frac{15\,o_{m-1}}{m(m+2)(m+4)}, \quad 
\int_{S^{m-1}}w_i^{\,4}w_j^{\,2}\,dv=\frac{3\,o_{m-1}}{m(m+2)(m+4)} \quad (i\ne j), \\[5mm]
\displaystyle 
\int_{S^{m-1}}w_i^{\,2}w_j^{\,2}w_k^{\,2}\,dv=\frac{o_{m-1}}{m(m+2)(m+4)} \quad (i\ne j\ne k\ne i)
\end{array}
\end{equation}
(cf. Theorem 1.5 in Appendix A of \cite{G}) shows that the terms in $\Res_{M,\nu,x}^{\rm loc}(-m-4)$ containing $f_{\ast\ast\ast\ast}$ are given by 
\begin{equation}\label{f****-terms_R_nu}
\begin{array}{l}
\displaystyle 
-\frac{o_{m-1}}{48m(m+2)}\left(
15\ip{f_{jj}}{f_{jjjj}}
+3\cdot{4\choose2}\ip{f_{jj}}{f_{jjkk}}
+3\ip{f_{jj}}{f_{kkkk}} \right.\\[4mm]
\displaystyle 
\left. 
\hspace{2.5cm}
+{4\choose2}\ip{f_{jj}}{f_{kkll}}
+3\cdot{2\choose1}{4\choose1}\ip{f_{jk}}{f_{jjjk}}
+4\cdot3\cdot2\,\ip{f_{jk}}{f_{jkll}}
\right)
\end{array}
\qquad (j\ne k\ne l\ne j),
\end{equation}
where we agree that we take sum for all possible combination of mutually distinct $j,k,l$. 
Now \eqref{f****-terms_R_nu}, \eqref{f****-terms_Delta_Sc} and \eqref{f****-terms_Delta_H} imply \eqref{residue_M^4_nu_-8_modified}. 

\medskip
Next we proceed to $\Res_{M,x}^{\rm loc}(-m-4)$. 
Let $a_4(w)$ be the coefficient of $r^{4}$ of the Maclaurin series expansion in $r$ of 
\[
\xi(r,w,-m-4)={\biggl(1+\frac{{|f(rw)|}^2}{r^2}\biggr)}^{-\frac{m+4}2}\sqrt{\det\big(\delta_{ij}+\left\langle f_i(rw), f_j(rw)\right\rangle\big)}\,.
\]
Let $B_k(w)$ be the coefficient of $r^{k}$ of the Maclaurin series expansions of $\det\left(g_{ij}(rw)\right)$, we have 
\[
a_4(w)=a_{\nu,4}(w)
+\frac12(-4)A_4(w)B_2(w)+\frac12B_4(w)-\frac18{B_2(w)}^2. 
\]
Then the $f_{****}$ terms of $a_4(w)-a_{\nu,4}(w)$ come from $\frac12B_4(w)$, which are given by
\[
\frac12\cdot\frac{8}{4!}\,\ip{f_{ia}}{f_{ibcd}}\,w_aw_bw_cw_d.
\]
The remaining computation can be carried out similarly using \eqref{moment_integral_4_on_S3}. 
Comparing the result with \eqref{f****-terms_Delta_Sc} and \eqref{f****-terms_Delta_H}, it follows that 
\[
\frac{32\,m(m+2)}{o_{m-1}}\big(\Res_{M,x}^{\rm loc}(-m-4)-\Res_{M,\nu,x}^{\rm loc}(-m-4)\big)
- 8\bigl(\Delta {|H|}^2-\Delta \Sc\bigr),
\]
can be expressed in terms of the derivatives of $f$ of order up to $3$, which implies \eqref{residue_M^4_-8_modified}. 
\end{proof}

Since the integrations of the second terms of \eqref{residue_M^4_-8_modified} and \eqref{residue_M^4_nu_-8_modified} vanish by Green's theorem as $\partial M=\emptyset$, 

\begin{corollary}
Conjecture \ref{conj_order_diff} holds for $j=2$. 
\end{corollary}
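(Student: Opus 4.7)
The plan is to read the corollary as essentially a direct consequence of Theorem \ref{prop_residue_H_Sc_graph} combined with the divergence theorem on a closed manifold. The local residues $\Res_{M,\nu,x}^{\rm loc}(-m-4)$ and $\Res_{M,x}^{\rm loc}(-m-4)$ are, by \eqref{RsubMrhoddtpsix}, integrable pointwise functions on $M$, and passing to the global residues amounts to integrating them over $M$ with respect to $dv(x)$.

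First, I would rewrite the two identities in Theorem \ref{prop_residue_H_Sc_graph} in the form
\[
\Res_{M,\nu,x}^{\rm loc}(-m-4) = F_\nu(x) - \frac{o_{m-1}}{32\,m(m+2)}\left(5\Delta |H|^2 - 4\Delta \Sc\right),
\]
\[
\Res_{M,x}^{\rm loc}(-m-4) = F(x) + \frac{o_{m-1}}{32\,m(m+2)}\left(3\Delta |H|^2 - 4\Delta \Sc\right),
\]
where $F_\nu$ and $F$ are pointwise expressions in the derivatives of the local defining function $f\colon T_xM\to (T_xM)^\perp$ of order at most $3$. Integrating both sides over $M$ yields
\[
\Res_{M,\nu}(-m-4) = \int_M F_\nu(x)\,dv(x) - \frac{o_{m-1}}{32\,m(m+2)}\int_M \left(5\Delta |H|^2 - 4\Delta \Sc\right)dv,
\]
and analogously for $\Res_M(-m-4)$.

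The second step is to apply the divergence theorem (Green's identity) on the closed Riemannian manifold $M$: since $\Delta |H|^2 = \mathrm{div}\,\nabla |H|^2$ and $\Delta \Sc = \mathrm{div}\,\nabla \Sc$ and $\partial M=\emptyset$, both integrals $\int_M \Delta |H|^2\,dv$ and $\int_M \Delta \Sc\,dv$ vanish. Consequently
\[
\Res_{M,\nu}(-m-4)=\int_M F_\nu(x)\,dv(x),\qquad \Res_M(-m-4)=\int_M F(x)\,dv(x),
\]
and both integrands depend only on the derivatives of $f$ up to order $3$, which is precisely the content of Conjecture \ref{conj_order_diff} for $j=2$.

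There is no real obstacle beyond the previous theorem and standard Stokes/Green; the only point worth underlining is that while $|H|^2$ and $\Sc$ genuinely require second derivatives of $f$ and their Laplacians require fourth derivatives, these obstructing fourth-order terms sit inside a total divergence and therefore disappear after integration over the closed manifold $M$. Thus the pointwise obstruction of Theorem \ref{prop_residue_H_Sc_graph} is removed at the global level, and the residues themselves admit an integral representation of order $j+1=3$, as claimed.
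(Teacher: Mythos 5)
Your proposal is correct and is essentially the paper's own argument: the paper likewise observes that the integrals of the Laplacian terms in \eqref{residue_M^4_nu_-8_modified} and \eqref{residue_M^4_-8_modified} vanish by Green's theorem since $\partial M=\emptyset$, so the global residues are integrals of expressions involving only derivatives of $f$ up to order $3$. Your expanded write-up adds nothing beyond what the paper states, and contains no errors.
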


\subsection{4-dimensional case}
We give explicit expressions of the local ($\nu$-weighted) residues when $M$ is a $4$-dimensional hypersurface, which will be used later. 
Assume $m=4$ in this subsection. 
If we carry out all the calculations for the terms $a_{\nu,4}(w)$ and $a_4(w)-a_{\nu,4}(w)$ in the proof of Theorem \ref{prop_residue_H_Sc_graph}, we obtain 
\begin{eqnarray}
a_{\nu,4}(w)&=&\displaystyle \frac58
\ip{f_{ij}}{f_{kl}}\ip{f_{ab}}{f_{cd}}w_iw_jw_kw_lw_aw_bw_cw_d \label{a_nu_-8-1} \\[0mm]
&&\displaystyle -\frac19
\ip{f_{ijk}}{f_{abc}}w_iw_jw_kw_aw_bw_c
-\frac16
\ip{f_{ij}}{f_{abcd}}w_iw_jw_aw_bw_cw_d . 
\>\>{}\label{a_nu_-8}
\end{eqnarray}
\begin{eqnarray}
a_4(w)-a_{\nu,4}(w)&=&\displaystyle 
-\frac{12}{4\,!}
\ip{f_{ij}}{f_{ik}}\ip{f_{ab}}{f_{cd}}w_jw_kw_aw_bw_cw_d \label{a-a_nu-hot-1} \\[1mm]
&&\displaystyle +\frac14\sum_{i<j}\sum_{a,b,c,d}
\big(\ip{f_{ia}}{f_{ib}}\ip{f_{jc}}{f_{jd}}-2\ip{f_{ia}}{f_{jb}}\ip{f_{ic}}{f_{jd}}\big)w_aw_bw_cw_d \label{a-a_nu-hot-2} \\
&&\displaystyle -\frac18
\ip{f_{ij}}{f_{ik}}\ip{f_{ab}}{f_{ac}}w_jw_kw_bw_c \label{a-a_nu-hot-3} \\[1mm]
&&\displaystyle +\frac1{4\,!}
\big(3\,\ip{f_{iab}}{f_{icd}}+4\,\ip{f_{ia}}{f_{ibcd}}\big)w_aw_bw_cw_d. \label{a-a_nu-hot}
\end{eqnarray}

Application of \eqref{moment_integral_6_on_S3} to \eqref{a_nu_-8} implies that the $f_{\ast\ast\ast}$ terms in $\Res_{M,\nu,x}^{\rm loc}(-8)$ are given by 

\begin{equation}\label{f***-terms_R_M_nu_x_-8}
\frac{-\pi^2}{288}\bigg(
5\sum_i\ip{f_{iii}}{f_{iii}}
+6\sum_{j\ne k}\ip{f_{jjk}}{f_{kkk}}
+9\sum_{j\ne k}\ip{f_{jkk}}{f_{jkk}}
+6\sum_{k<\ell, j\ne k,\ell}
+12\sum_{j<k<\ell}
\ip{f_{jk\ell}}{f_{jk\ell}}
\bigg).
\end{equation}
Similarly, application of \eqref{moment_integral_4_on_S3} to \eqref{a-a_nu-hot} implies that the $f_{\ast\ast\ast}$ terms in $\Res_{M,x}^{\rm loc}(-8)$ are given by 
\begin{equation}\label{f***-terms_R_M_x_-8}
\frac{\pi^2}{72}\bigg(
\sum_i\ip{f_{iii}}{f_{iii}}
+3\sum_{j\ne k}\ip{f_{jkk}}{f_{jkk}}
+6\sum_{j<k<\ell}
\ip{f_{jk\ell}}{f_{jk\ell}}
\bigg). 
\end{equation}

Assume further that $M$ is a hypersurface. 
Application of \eqref{moment_integral_6_on_S3} and \eqref{moment_integral_4_on_S3} to 
\eqref{a_nu_-8-1} - \eqref{a-a_nu-hot}, together with \eqref{f***-terms_R_M_nu_x_-8} and \eqref{f***-terms_R_M_x_-8} implies 
\begin{lemma}\label{lemma_residue_H_Sc_graph}
Let $M$ be a $4$-dimensional hypersurface. 
Suppose $M\ni0$ and $T_0M$ is $\RR^4$. 
Suppose $M$ is locally expressed as a graph of a function $f$ on a neighbourhood of $0$ of $T_0M$. 
Assume that $\{\partial/\partial u_i\}$ is an orthonormal basis of $T_0M$, each $\partial/\partial u_i$ belonging to a principal direction of $M$ at $0$. 
Let $H$ be $\kappa_1+\kappa_2+\kappa_3+\kappa_4$. 
Then, at the origin we have 
\begin{eqnarray}
\displaystyle \frac{1536}{\pi^2}\,\Res_{M,x}^{\rm loc}(-8)&\eao & -63\sum_i\kappa_i^4 -26\sum_{i<j}\kappa_i^2\kappa_j^2
+12\sum_{i\ne j}\kappa_i\kappa_j^3+20\sum_{i<j,k\ne i,j}
\kappa_i\kappa_j\kappa_k^2 +24\kappa_1\kappa_2\kappa_3\kappa_4 \qquad {}\label{R_kappa} \\
&&\displaystyle +\frac{64}3\sum_if_{iii}^2+64\sum_{i\ne j}f_{iij}^2+128\sum_{i<j<k}f_{ijk}^2  \label{R_-8_c-part}  \\
&&\displaystyle +8\sum_i(4\kappa_i-H)f_{iiii}+16\sum_{i<j}(2\kappa_i+2\kappa_j-H)f_{iijj}\,, \label{R_-8_d-part} 
\end{eqnarray}
\begin{eqnarray}
\displaystyle \frac{1536}{\pi^2}\,\Res_{M,\nu,x}^{\rm loc}(-8)&\eao & 105\sum_i\kappa_i^4+54\sum_{i<j}\kappa_i^2\kappa_j^2+60\sum_{i\ne j}\kappa_i\kappa_j^3+36\sum_{i<j,k\ne i,j}
\kappa_i\kappa_j\kappa_k^2 
+24\kappa_1\kappa_2\kappa_3\kappa_4 \qquad{}\label{R_nu_kappa} \\
&&\displaystyle -\frac{80}3\sum_if_{iii}^2-48\sum_{i\ne j}f_{iij}^2-64 \sum_{i<j<k}f_{ijk}^2 \notag  \\
&&\displaystyle 
-32\sum_{i<j,k\ne i,j}
f_{iik}f_{jjk}-32\sum_{i\ne j}f_{iii}f_{ijj} \label{R_nu_-8_c-part}  \\
&&\displaystyle -8\sum_i(4\kappa_i+H)f_{iiii}-16\sum_{i<j}(2\kappa_i+2\kappa_j+H)f_{iijj}\,,  \label{R_nu_-8_d-part} 
\end{eqnarray}
\end{lemma}

\begin{proposition}
Let $M$ be a $4$-dimensional hypersurface. 
The local ($\nu$-weighted) residues $\Res_{M,x}^{\rm loc}(-8)$ and $\Res_{M,\nu,x}^{\rm loc}(-8)$ cannot be expressed as linear combinations of 
$H^4, H\Delta H, {|\nabla H|}^2, \Delta \Sc, {|\Rm|}^2, {|{\rm Ric}|}^2$, and $\Sc$. 
\end{proposition}

\begin{proof}
First note that \eqref{Sc_square_principal_curv} - 
\eqref{grad_H_sq_m=4}, \eqref{Delta_Sc_center}, \eqref{R_-8_c-part}, \eqref{R_-8_d-part}, \eqref{R_nu_-8_c-part} and \eqref{R_nu_-8_d-part} 
imply that $f_{\ast\ast\ast\ast}$-terms appear only in the residues, $H\Delta H$, and $\Delta \Sc$, and that $\sum_{j<k<l}f_{jkl}^{\,2}$ appear only in the residues and $\Delta \Sc$. 
Assume that the residues are expressed by the curvatures mentioned above. 
Since $\Delta H^2=2(H\Delta H+{|\nabla H|}^2)$, the coefficients of $\Delta \Sc$ are given by \eqref{residue_M^4_nu_-8_modified} and \eqref{residue_M^4_-8_modified}. 
Then the coefficients of $\sum_{j<k<l}f_{jkl}^{\,2}$ do not match, which is a contradiction. 
\end{proof}

\subsection{Residues of spheroids}
Calculating the residues is generally very difficult. 
We introduce hyper-spheroids as an example where some of the residues can be calculated. 

\begin{definition}\label{def_hyper-spheroid} \rm 
By an $m$-dimensional {\em $a$-hyper-spheroid} $S_a$ we mean a hyper-ellipsoid with axes of length $(1,\dots,1,a)$; 
\[
S_a\,:\,{x_1}^2+\dots+{x_m}^2+\frac{{x_{m+1}}^2}{a^2}=1.
\] 
\end{definition}

We use the parametrization 
\begin{equation}
p(a;\theta)=
\left(\begin{array}{l}
\sin\theta_1\dots\sin\theta_{m-1}\cos\theta_m\\
\sin\theta_1\dots\sin\theta_{m-1}\sin\theta_m\\
\sin\theta_1\dots\cos\theta_{m-1}\phantom{\sin\theta_m}\\
\vdots\\
\sin\theta_1\cos\theta_2\\
a\cos\theta_1
\end{array}\right) \qquad
\left(
\begin{array}{c}
\theta=(\theta_1,\dots,\theta_m),\\[1mm]
\begin{array}{rcl}
\theta_1,\dots,\theta_{m-1}&\in&\left[0,\pi\right],  \\[1mm]
\theta_m&\in&[0,2\pi)
\end{array}
\end{array}
\right). 
\notag
\end{equation}
Then 
\[
G=\left(\langle p_i, p_j\rangle\right)
={\rm diag}\biggl(a^2\sin^2\theta_1+\cos^2\theta_1, \sin^2\theta_1,\dots,\prod_{i=1}^{m-1}\sin^2\theta_i
\biggr),
\]
and the volume element $dv$ is given by 
\begin{equation}\label{vol_ele_spheroid}
dv=\sqrt{a^2\sin^2\theta_1+\cos^2\theta_1}\, \sin^{m-1}\theta_1\sin^{m-2}\theta_2\dots\sin\theta_{m-1} \,d\theta_1\dots d\theta_m.
\end{equation}
Since the outer unit normal $\nu$ to $S_a$ is given by 
\begin{equation}\label{n_a_theta}
\nu(a;\theta)=\frac{p(1/a;\theta)}{|p(1/a;\theta)|}, 
\notag
\end{equation}
we have
\[
-G^{-1}(\langle p_i,\nu_j\rangle)
=-\frac1{|p(1/a;\theta)|}\,{\rm diag}\biggl(\frac1{\langle p_1, p_1\rangle},1,\dots,1\biggr),
\]
which implies that the principal curvatures of $S_a$ at point $p(a;\theta_1,\dots,\theta_m)$ are given by 
\begin{equation}\label{principal_curvatures_S_a}
\ka_1=-\frac a{{\left(a^2\sin^2\theta_1+\cos^2\theta_1\right)}^{3/2}}, \>\>
\ka_2=\dots=\ka_m=-\frac a{\sqrt{a^2\sin^2\theta_1+\cos^2\theta_1}}. 
\end{equation}

We compute the residues of a $4$-dimensional hyper-spheroid at $-8$. 

\begin{lemma}\label{a-hyper-spheroid-residues}
Let $S_a$ be a $4$-dimensional $a$-hyper-spheroid. The $\nu$-weighted residue and the residue at $z=-8$ are given by 
\begin{equation}\label{formulae_a-spheroid}
\begin{array}{rcl}
\Rd{-8}{S_a,\nu}&=&\displaystyle \frac{\pi^4}{384}\cdot \frac{105a^6-50a^4-24a^2-16 
\,+105a^6\left(a^2-\displaystyle \frac{8}{7}\right)\omega(a)}{a^4(a^2-1)\phantom{\tilde l}\!\!} \\[5mm]
\Rd{-8}{S_a}&=&\displaystyle \frac{\pi^4}{40320}\cdot \frac{1241a^8-1346a^6-2232a^4+1648a^2-256 -1575a^8\left(a^2-\displaystyle \frac{8}{5}\right)\omega(a)}{a^6(a^2-1)\phantom{\tilde l}\!\!}
\end{array}
\end{equation}
when $a\ne1$, where $\omega(a)$ 
is given by 
\begin{equation}\label{def_omega}
\omega(a)=\left\{
\begin{array}{cl}
\displaystyle \frac{\displaystyle \log\biggl(\frac{1+\sqrt{1-a^2}}{a}\,\biggr)}{\sqrt{1-a^2}} & \hspace{0.7cm} 0<a<1, \\[5mm]
\displaystyle \frac{\arctan\sqrt{a^2-1}}{\sqrt{a^2-1}} &\hspace{0.7cm} a>1.
\end{array}
\right.
\end{equation}
\\ \indent
When $a=1$, that is, when $S_a$ is a round unit $4$-sphere $S^4$, 
\begin{equation}\label{unit_4-sphere}
\Rd{-8}{S_1,\nu}=\frac{2\pi^4}3 \>\>\mbox{ and }\>\> \Rd{-8}{S_1}=0. 
\end{equation}
\end{lemma}

Graphs of the above residues as functions of $a$ are illustrated in Figures \ref{ROmega_m8} and \ref{RM_m8}. 

\begin{proof} 
We use Lemma \ref{lemma_residue_H_Sc_graph}. 
We apply rotation and translation in $(x_4,x_5)$-plane to express a neighbourhood of $p(a;\theta_1,0,0,0)$ of $S_a$ as a graph of a function $f$. 
We have 
\begin{equation}\label{coeff_f_ijk}
f_{ii4}=\frac{a(a^2-1)\cos\theta_1\sin\theta_1}{{(\cos^2\theta_1+a^2\sin^2\theta_1)}^2} \>\> (i\ne4), \quad
f_{444}=3\frac{a(a^2-1)\cos\theta_1\sin\theta_1}{{(\cos^2\theta_1+a^2\sin^2\theta_1)}^3}, \quad
f_{ijk}=0 \>\> (\mbox{otherwise}).
\quad{}
\end{equation}
Substitute \eqref{principal_curvatures_S_a} and \eqref{coeff_f_ijk} to 
\eqref{Delta_H_m=4}, \eqref{Delta_Sc_1} - \eqref{Delta_Sc_3}, and 
\eqref{R_kappa} - 
\eqref{R_nu_-8_d-part}, and then use \eqref{residue_M^4_nu_-8_modified} and \eqref{residue_M^4_-8_modified} to delete $f_{****}$ terms of $\Res_{S_a,\nu}$ and $\Res_{S_a}$. 
The calculation can be reduced to the integration in a single variable $\theta_1$ as follows. 
\begin{equation}\label{f_Rnu_S_a}
\begin{array}{rcl}
\Rd{-8}{S_a,\nu}&=&\displaystyle  \frac{\pi^4}{768}\int_0^\pi \frac{a^2}{{\big(a^2\sin^2\theta_1+\cos^2\theta_1\big)}^6} 
\biggl[\big(a^2-1\big)^4\big(105a^2-120)\cos^8\theta_1
\\[6mm]
&&\displaystyle 
-4\big(a^2-1\big)^3\big(105a^4-33a^2+66\big)\cos^6\theta_1
+6\big(a^2-1\big)^2\big(105a^6+54a^4+85a^2-44\big)\cos^4\theta_1
\\[4mm]
&&\displaystyle 
-\big(420a^{10}+144a^8-336a^6-408a^4+300a^2-120\big)\cos^2\theta_1
\\[4mm]
&&\displaystyle 
+a^2\big(105a^8+228a^6-18a^4+84a^2-15\big)
\biggr] 
\sqrt{a^2\sin^2\theta_1+\cos^2\theta_1}\,\sin^3\theta_1\,d\theta_1.
\end{array}
\end{equation}
\begin{equation}\label{f_R_S_a}
\begin{array}{rcl}
\Rd{-8}{S_a}&=&\displaystyle \frac{\pi^4}{768}\int_0^\pi 
\frac{3\,a^2(a^2-1)^2\sin^2\theta_1}{{\big(a^2\sin^2\theta_1+\cos^2\theta_1\big)}^6} 
\biggl[\big(a^2-1\big)^2\big(5a^2-8\big)\cos^6\theta_1
 \\[6mm]
&&\displaystyle  \quad  
+\big(-15a^6+{22}a^4-55a^2+{48}\big)\cos^4\theta_1 
+\big(15a^6+10a^4+{31}a^2-8\big)\cos^2\theta_1  \hspace{2.5cm}{}
     \\[2mm]
&&\displaystyle \quad -5a^6-{14}a^4+3a^2  \biggr] 
\sqrt{a^2\sin^2\theta_1+\cos^2\theta_1}\,\sin^3\theta_1\,d\theta_1,
\end{array}
\end{equation}

First assume $a\ne1$. 
Computation using Maple implies that the antiderivatives of the integrands of \eqref{f_Rnu_S_a} and \eqref{f_R_S_a} can be explicitly expressed by elementary functions including $\omega(a)$ albeit lengthy, from which the conclusion follows.

The case when $a=1$ can be obtained by substituting $a=1$, $\int_0^\pi \sin^3\theta\,d\theta=4/3$ and $\int_0^\pi \cos^2\theta\sin^3\theta\,d\theta=4/15$ to \eqref{f_Rnu_S_a} and \eqref{f_R_S_a}. 
We remark that $\Rd{-8}{S_1}=0$ also follows from 
\[
B_{S^4}(z)=2^{z+4}o_4o_3 B\left(\frac{z}2+2,2\right), 
\]
which was given by Fuller-Vemuri \cite{FV}. 
\end{proof}

\section{Residues and relative residues of compact bodies}\label{section_compact_bodies}

\subsection{Preliminaries}
We introduce some of the basics from previous studies from \cite{OS2}. 

Let $\Omega$ be a smooth compact body, i.e. a compact $n$-dimensional submanifold of $\RR^n$ with a smooth boundary $\pO$. 
We study the case when the weight $\lambda$ is a constant function $1$, namely, 
\[
I_\Omega(z)=
\iint_{\dom\times\dom}{|x-y|}^z\,dxdy. 
\]
It is well-defined on $\R z>-n$ and is a holomorphic function of $z$ there. 
Since 
\begin{eqnarray}
\displaystyle (z+n)|x-y|^z &=& \displaystyle \textrm{div} _y\left[\,|x-y|^z(y-x)\,\right] \label{div_y}, \\[1mm]
(z+2)\langle |x-y|^{z}(y-x), \uon{y}\rangle
&=&\displaystyle \textrm{div} _x\left[-|x-y|^{z+2}\,\uon{y}\right], \nonumber
\end{eqnarray}
where $\uon{y}$ is an outer unit normal vector to $\pO$ at point $y$, 
using Gauss divergence theorem twice we obtain 
\begin{equation}\label{IOmegaboundary}
\begin{array}{rcl}
I_\Omega(z)
&=&\displaystyle \frac{-1}{(z+2)(z+n)}\iint_{\partial\Omega\times \partial\Omega}|x-y|^{z+2}\langle\uon{x},\uon{y}\rangle\,\dx \dy   
%
\end{array}
\end{equation}
when $\R z>-n$ and $z\ne-2$ (\cite{OS2}). 
Therefore if we put 
\begin{equation}\label{def_lambda_nu}
\la_\nu(x,y)=\ip{\nu_x}{\nu_y}
\end{equation}
then 
\begin{equation}\label{IOmegaboundary_nu}
I_\Omega(z)=\frac{-1}{(z+2)(z+n)}\,I_{\pO,\la_\nu}(z+2) \notag
\end{equation}
for $\R z>-n$ and $z\ne-2$. 
Thus, some part of the study of $I_\Omega(z)$ can be reduced to that of $I_{\pO,\la_\nu}(z+2)$. 

Since $I_{\pO,\la_\nu}(z+2) $ can be extended to a meromorphic function as we saw in the previous section, $I_\Omega(z)$ can also be extended to 
a meromorphic function, which we denote by $B_\Omega(z)$, with possible simple poles at $z=-n, -n-(2j+1)$ $(j=0,1,2,\dots)$. 
Let us call $B_\Omega(z)$ {\em meromorphic energy function} or {\em Brylinski's beta function} or {\em regularized Riesz $z$-energy} of $\Omega$. 

For a boundary point $x$ in $\pO$, put 
\[\begin{array}{rcl}
B_{\Omega,x}^{\partial \>{\rm loc}}(z)
=\displaystyle \frac{-1}{(z+2)(z+n)}\int_{\partial\Omega}|x-y|^{z+2}\langle\uon{x},\uon{y}\rangle\,\dy , 
\end{array}
\]
and call it the {\em meromorphic boundary potential function} of $\Omega$. 
Let us denote the residues of $B_\Omega(z)$ and $B_{\Omega,x}^{\partial \>{\rm loc}}(z)$ at $z=k$, $(k=-n,-n-2j-1$, $j=0,1,2,\dots)$ by $\Rsub{\dom}(k)$ and $\Rsub{\dom,x}^{\partial \> \rm loc}(k)$ respectively, and call them the {\em residue} and {\em boundary local residue} of $\Omega$. 
We have 
\[
\Rsub{\dom}(k)=\int_{\pO}\Rsub{\dom,x}^{\partial \> \rm loc}(k)\,\dx .
\]
We remark that the above also holds for $k=-n$ (\cite{OS2}). 

The {\em regularized energy} of $\dom$ is defined in the same way as in \eqref{def_regularized_energy}.

\subsection{Known formulae of residues}
Suppose $\Omega$ is a compact body in $\RR^n$. 

\begin{proposition}\label{residues_Omega_123}{\rm (\cite{OS2})}
The first three residues of a compact body $\dom$ are given by 
\begin{eqnarray}
\Rsub{\dom}(-n)&=&\displaystyle {o_{n-1}}\text{\rm Vol}\,(\dom)=\left\{ 
\begin{array}{ll}
\displaystyle \frac{1}{(n-2)}\iint_{ \pO\times \pO}|x-y|^{2-n}\langle\uon{x},\uon{y}\rangle\,\dx \dy 
&\quad (n>2) \\[4mm]
\displaystyle -\iint_{\pO\times \pO}\log|x-y|\,\langle\uon{x},\uon{y}\rangle\,\dx \dy  
& \quad (n=2)
\end{array}
\right. \label{residue_Omega_-n} \\
\Rsub{\dom}(-n-1)&=&-\frac{o_{n-2}}{n-1}\,\textrm{\rm Vol}(\pO), \label{residue_Omega_-n-1}\\
\Rd{-n-3}{\Omega}&=&\displaystyle \frac{o_{n-2}}{24(n^2-1)}\int_{\pO} \left(3H^2-2\Sc\right)\dx  \notag  \\[1mm] 
&=&\displaystyle \frac{o_{n-2}}{24(n^2-1)}\int_{\pO} \left(2{\Vert h\Vert}^2+{\vert H\vert}^2\right)\dx , \label{residueOmega-n-3_BH}
\end{eqnarray}
where $\Sc$ is the scalar curvatures, $\Sc=2\sum_{1\le i<j\le n-1}\kappa_i\kappa_j$, where $\kappa_i$ $(1\le i\le n-1)$ are the principal curvatures of $\pO$, $H$ is $n-1$ times the mean curvature $H=\sum_{i=1}^{n-1}\kappa_i$, 
and ${\Vert h\Vert}^2$ is the Hilbert-Schmidt norm of the second fundamental form. 
\end{proposition}

\subsection{\M invariance}
%
We show the \M invariance of the residue at $z=-2\dim\dom$. 

When $z$ is not a pole of $B_\dom$, $B_\dom(z)$ is a homogeneous function of $\dom$ with degree $z+2n$, i.e. $B_{c\dom}(z)=c^{z+2n}B_\dom(z)$. 
When $z$ is a pole of $B_\dom$, the residue $\Res_\dom$ inherits the homogeneity property, whereas the regularized energy $E_\dom(z)$ does not unless the residue vanishes identically. 

We show that the same phenomenon occurs for \M invariance. 
The degree of homogeneity implies that the \M invariance can occur only when $z=-2n$. 
We showed that if $n$ is even then $B_\dom(-2n)$ is \M invariant (\cite{OS2'}). 

\begin{theorem}\label{thm_M_inv_body}
If $n$ is odd then the residue at $z=-2n$ is \M invariant, i.e. $\Rsub{T(\dom)}(-2n)=\Rsub{\dom}(-2n)$ for any \M transformation $T$ as far as $T(\dom)$ remains compact. 
\end{theorem}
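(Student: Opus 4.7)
The plan is to adapt the proof of Theorem \ref{M-inv-res-closed} to the body setting, with the dimension $m$ of the submanifold replaced by the dimension $n$ of the body; the parity swap (even $\to$ odd) is exactly forced by the pole structure, since $-2n$ belongs to the list of possible poles $\{-n,\,-n-(2j+1)\}$ of $B_\dom$ precisely when $n$ is odd (corresponding to $j=(n-1)/2$). I first reduce to the case where $0\notin\dom$ and $T$ is inversion in the unit sphere centered at the origin. Using $|\tilde x-\tilde y|=|x-y|/(|x|\,|y|)$ and $dv(\tilde x)=dx/|x|^{2n}$, for $\R z>-n$ I obtain
\[
B_{T(\dom)}(z)-B_\dom(z)=\iint_{\dom\times\dom}|x-y|^z\,\la_z(x,y)\,dx\,dy,\qquad \la_z(x,y):=(|x|\,|y|)^{-(z+2n)}-1,
\]
and since $\la_{-2n}\equiv 0$, the function $\la_z$ together with all its partial derivatives in $(x,y)$ converges uniformly to $0$ on $\dom\times\dom$ as $z\to -2n$.

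Next I will set $\Psi_z(t)=\iint_{(\dom\times\dom)\cap\Delta_t}\la_z(x,y)\,dx\,dy$ with $\Delta_t=\{(x,y):|x-y|\le t\}$, verify that $\Psi_z$ extends smoothly to $[0,\de]$ for some $\de>0$, and apply \eqref{Iphiz} to $\varphi=\Psi_z'$ with $k=2n$. This yields an analog of \eqref{BtildeM-BM}:
\begin{equation*}
\begin{array}{l}
B_{T(\dom)}(z)-B_\dom(z)=\displaystyle\int_0^\de t^z\Bigl(\Psi_z'(t)-\sum_{j=0}^{2n-1}\frac{\Psi_z^{(j+1)}(0)}{j!}\,t^j\Bigr)\,dt \\[4mm]
\displaystyle \qquad+\sum_{j=1}^{2n}\frac{\Psi_z^{(j)}(0)\,\de^{z+j}}{(j-1)!\,(z+j)}+\iint_{(\dom\times\dom)\setminus\Delta_\de}|x-y|^z\,\la_z(x,y)\,dx\,dy.
\end{array}
\end{equation*}
Smoothness of $\Psi_z$ together with the uniform vanishing of $\la_z$ and all its derivatives forces $\sup_{0\le t\le\de}|\Psi_z^{(i)}(t)|\to 0$ as $z\to -2n$ for each $i$.

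The third step is to multiply through by $z+2n$ and pass to the limit. Exactly as in the closed case, the first integral is dominated by $\sup_{[0,\de]}|\Psi_z^{(2n+1)}|\cdot\de^{z+2n+1}/((2n)!(z+2n+1))$ and hence tends to $0$, while the third summand tends to $0$ by the uniform convergence $\la_z\to 0$. In the middle sum, $z+j$ vanishes at $z=-2n$ only for $j=2n$, and after cancellation with the factor $z+2n$ that term becomes $\Psi_z^{(2n)}(0)\,\de^{z+2n}/(2n-1)!$, which tends to $0$; every other term in the sum has no pole at $z=-2n$ and is annihilated by $z+2n$. This gives $\lim_{z\to -2n}(z+2n)\bigl(B_{T(\dom)}(z)-B_\dom(z)\bigr)=0$, i.e., $\Rsub{T(\dom)}(-2n)=\Rsub{\dom}(-2n)$.

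The main technical obstacle is verifying that $\Psi_z$ extends smoothly to a neighborhood of $t=0$ with derivatives depending continuously on $z$ near $-2n$, so that the uniform vanishing transfers to every derivative $\Psi_z^{(i)}$. This amounts to a weighted variant of Proposition 3.3 of \cite{OS2} for the body $\dom$: the weight $\la_z(x,y)$ is jointly smooth on $\dom\times\dom$ for every $z$ and depends smoothly on $z$, so the coarea-type argument from the unweighted case should extend without genuinely new analytic input. Once this smoothness is in hand, the limit computation above carries the proof through.
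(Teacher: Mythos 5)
Your proposal follows essentially the same route as the paper: the paper's proof of this theorem is literally "replace $M$ with $\Omega$ and $m$ with $n$ in the proof of Theorem \ref{M-inv-res-closed}," which is what you do. The one point worth flagging is that you describe the smoothness of $\Psi_z(t)=\iint_{(\Omega\times\Omega)\cap\Delta_t}\la_z\,dxdy$ near $t=0$ as something that "should extend without genuinely new analytic input," whereas the paper singles this out as \emph{the} non-trivial difficulty of the body case (the $n$-dimensional domain with boundary behaves differently from a closed submanifold near the diagonal, which is why $B_\dom$ has poles at $-n-1-2j$ rather than $-n-2j$) and defers it to the erratum \cite{OS2'}.
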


\begin{proof}
This theorem can be proved by replacing $M$ with $\Omega$ and $m$ with $n$ in the proof of Theorem \ref{M-inv-res-closed}. 

The non-trivial difficulty in this case is to show the smoothness of 
\[
\Psi_z(t)=\iint_{(\Omega\times\Omega)\cap\Delta_t}\la_z(x,y)\,dxdy
\]
on $[0,d]$ for some $d>0$, which has been done in \cite{OS2'}. 
\end{proof}

If $n$ is odd then the regularized $(-2n)$-energy $E_\dom(-2n)$ is not scale invariant (and hence not \M invariant) as was shown in the proof of Theorem 4.11 of \cite{OS2}.

\subsection{Independence of $B_\dom(z)$ and $B_{\pO}(z)$}\label{subsubsection_indep_B_dom_B_pO}

We show that $B_\dom(z)$ and $B_{\pO}(z)$ are independent as functionals from the space of compact bodies to the space of meromorphic functions. 

First recall that the meromorphic energy functions (Brylinski's beta functions) of a unit sphere and a unit ball are given by 
\begin{eqnarray}
B_{S^{n-1}}(z)&=& \displaystyle 2^{z+n-2}o_{n-1}o_{n-2}\,B\left(\frac{z+n-1}2,\frac{n-1}2\right), \label{beta_sphere} \notag \\
B_{B^n}(z)&=&\displaystyle \frac{2^{z+n}o_{n-1}o_{n-2}}{(n-1)(z+n)}
\,B\left(\frac{z+n+1}2,\frac{n+1}2\right)  \label{beta_ball} \\
&=&\displaystyle \frac{2^{z+n}o_{n-1}o_{n-2}}{(n-1)}
\frac{\displaystyle \Gamma\left(\frac{z+n+1}2\right)\Gamma\left(\frac{n+1}2\right)}{\displaystyle (z+n)\,\Gamma\left(\frac{z}2+n+1\right)},
\end{eqnarray}
where $B(\>,\>)$ denotes the beta function. The first equality is due to \cite{B} when $n=2$ and \cite{FV} in general. For the second, see for example, \cite{Mi,HR}. 

\begin{lemma}\label{lemma_indep_-2alpha0}
Assume $n\ge2$. 
\begin{enumerate}
\item Let $\a$ be a real number with $-2<\a\le0$. Then for any complex number $w$ there is a pair of compact bodies $\dom_1, \dom_2$ that satisfy 
$B_{\dom_1}(\a)=B_{\dom_2}(\a)$ and $B_{\partial\dom_1}(w)\ne B_{\partial\dom_2}(w)$. 
\item For any complex number $w$ there is a pair of compact bodies $\dom_1, \dom_2$ that satisfy 
$B_{\partial\dom_1}(0)= B_{\partial\dom_2}(0)$ and $B_{\dom_1}(w)\ne B_{\dom_2}(w)$. 
\end{enumerate}
\end{lemma}

\begin{proof}
(1-1) First assume $\a=0$. Put
\[
\begin{array}{rcl}
\dom_1&=&B^n \\
\dom_2=\dom_2(\de)&=&\displaystyle 2^{-1/n}B^n\sqcup\big(2^{-1/n}B^n+\de e_1\big) \qquad \big(\de>2\cdot 2^{-1/n}\big),
\end{array}
\]
where $X+v=\{x+v\,:\,x\in X\}$. 
Then $B_{\dom_1}(0)={({\rm Vol}(B^n))}^2=B_{\dom_2(\de)}(0)$ holds for any $\de$. 
Let $w=a+b\sqrt{-1}$ $(a,b\in\RR)$. It can be divided into the following three cases. 

(i) Suppose $w=0$. Then $B_{\partial\dom_2}(0)=4\cdot (2^{-1/n})^{2n-2}{({\rm Vol}(S^{n-1}))}^2\ne {({\rm Vol}(S^{n-1}))}^2=B_{\partial\dom_1}(0).$

(ii) Suppose $\I w\ne0$. We may assume without loss of generality that $b=\I w>0$. We have 
\begin{equation}\label{B_pO2_w}
B_{\partial\dom_2(\de)}(w)=2\cdot \big(2^{-1/n}\big)^{w+2n-2}B_{S^{n-1}}(w)
+2\int_{2^{-1/n}S^{n-1}}\int_{2^{-1/n}S^{n-1}+\delta e_1}{|x-y|}^w\,\dx \dy .
\end{equation}
Note that the first term of the right hand side is independent of $\de$. 
Since
\[
b\log\big(\delta-2\cdot 2^{-1/n}\big)\le b\log |x-y|=\arg \big({|x-y|}^w\big)
\le b\log\big(\delta+2\cdot 2^{-1/n}\big),
\]
for any positive number $\e$ there is $\delta_0>2\cdot 2^{-1/n}$ such that if $\delta>\delta_0$ then 
\[\arg \big({|x-y|}^w\big)\in (b\log\de-\e,b\log\de+\e) \quad ({\rm mod} \> 2\pi).\]
One can choose a small $\e$ and $\de_1,\de_2>\de_0$ such that 
\[
(b\log\de_1-\e,b\log\de_1+\e)\cap(b\log\de_2-\e,b\log\de_2+\e)=\emptyset,
\]
which implies $B_{\pO_2(\de_1)}(w)\ne B_{\pO_2(\de_2)}(w)$. 
Then at least one of $B_{\pO_2(\de_1)}(w)$ and $B_{\pO_2(\de_2)}(w)$ is different from $B_{\pO_1}(w)$.

(iii) Suppose $\I w=0$ and $w\ne0$. 
The second term of the right hand of \eqref{B_pO2_w} is a strictly increasing (or decreasing) function of $\de$ if $w>0$ (or respectively $w<0$). 
Therefore if $\de_1\ne\de_2$ at least one of $B_{\dom_2(\de_1)}(w)$ and $B_{\dom_2(\de_2)}(w)$ is different from $B_{\dom_1}(w)$. 

\smallskip
(1-2) Next assume $-2<\a<0$. Put 
\[
\begin{array}{rcl}
\dom_1&=&B^n \\
\dom_2=\dom_2(c,\de)&=&\displaystyle cB^n\sqcup (cB^n+\delta e_1) \qquad (\de>2c).
\end{array}
\]
We have 
\begin{equation}\label{B_pO2_c_w}
B_{\dom_2(c,\de)}(\a)=2\cdot {c}^{\a+2n}B_{B^{n}}(\a)
+2\int_{cB^n}\int_{cB^n+\delta e_1}{|x-y|}^\a\,\dx \dy . \notag
\end{equation}
Suppose $\de>2$. 
Since $\a+2n>0$ and $B_{B^{n}}(\a)>0$,  
$B_{\dom_2(c,\de)}(\a)$ is a strictly increasing function of $c$ $(0<c<\de/2)$ for fixed $\de$. 
Since $B_{\dom_2(c,\de)}(\a)$ tends to $+0$ as $c$ approaches $0$ and $B_{\dom_2(1,\de)}(\a)>B_{B^n}(\a)=B_{\dom_1}(\a)$ when $c=1$, there is a unique $c=c(\de)$ that satisfies $B_{\dom_2(c,\de)}(\a)=B_{\dom_1}(\a)$. 
We remark that $c(\de)$ is an increasing function of $\de$ and that $c(\de)$ tends to $2^{-1/(\a+2n)}$ as $\de$ goes to $+\infty$.  

We have 
\begin{equation}\label{B_pO2(c_delta)_w}
B_{\partial\dom_2(c(\de),\de)}(w)=2\cdot {c(\de)}^{w+2n-2}B_{S^{n-1}}(w)
+2\int_{c(\de)S^{n-1}}\int_{c(\de)S^{n-1}+\delta e_1}{|x-y|}^w\,\dx \dy .
\end{equation}
Let the second term of the right hand side be denoted by $I_2(c,\de,w)$. 
Let $w=a+b\sqrt{-1}$ $(a,b\in\RR)$. It can be divided into the following four cases. 

(i) Suppose $w=0$. As $\de$ goes to $+\infty$, $B_{\partial\dom_2(c(\de),\de)}(0)=4\,{c(\de)}^{2n-2}B_{S^{n-1}}(0)$ tends to $4\cdot{2}^{-(2n-2)/(\a+2n)}B_{S^{n-1}}(0)$, which is different from $B_{S^{n-1}}(0)$ since $B_{S^{n-1}}(0)\ne0$ and $\a\ne-n-1$. 

(ii) Suppose $a=\R w<0$. First assume $B_{S^{n-1}}(w)\ne0$. As $\de$ goes to $+\infty$, $B_{\partial\dom_2(c(\de),\de)}(w)$ tends to $2\cdot 2^{-(w+2n-2)/(\a+2n)}B_{S^{n-1}}(w)$, which is equal to $B_{\partial\dom_1}(w)=B_{S^{n-1}}(w)$ if and only if $w=\a+2$, which does not occur since $\a>-2$ and $\R w<0$. 

Next assume $B_{S^{n-1}}(w)=0$. 
If $\I w=0$ then $I_2(c(\de),\de,w)\ne0$. 
If $\I w\ne0$ then by a similar argument on the argument of ${|x-y|}^w$ as in (ii) of (1-1), for a sufficiently large $\delta$, $I_2(c(\de),\de,w)\ne0$. 
In both cases, $B_{\partial\dom_2(c(\de),\de)}(w)=I_2(c(\de),\de,w)\ne0= B_{\partial\dom_1}(w)$. 

(iii) Suppose $a=\R w>0$. 
Since the absolute value of the first term of \eqref{B_pO2(c_delta)_w} is uniformly bounded for $\de\ge3$, when $\de$ is sufficiently large, $B_{\partial\dom_2(c(\de),\de)}(w)$ is dominated by $I_2(c(\de),\de,w)$, 
which can be approximated by 
\[
\big(\cos(b\log\de)+i\sin(b\log\de)\big)\,\de^a \, {2}^{-\frac{2n-2}{\a+2n}}\big({\rm Vol}(S^{n-1})\big)^2
\]
by a similar argument as in (ii) of (1-1). 
Since its absolute value is an unbounded increasing function of $\de$, one can find a $\de$ such that $|B_{\partial\dom_2(c(\de),\de)}(w)|>|B_{\pO_1}(w)|$.

(iv) Suppose $\R w=0$ and $b=\I w\ne0$. 
When $\de$ is sufficiently large, $B_{\partial\dom_2(c(\de),\de)}(b\,i)$ can be approximated by 
\[
2\cdot 2^{-\frac{2n-2}{\a+2n}}\left(2^{-\frac{b\,i}{\a+2n}}B_{S^{n-1}}(b\,i)+\de^{\,b\,i}
\right),
\]
one can find $\de_1$ and $\de_2$ such that $B_{\partial\dom_2(c(\de_1),\de_1)}(b\,i)\ne B_{\partial\dom_2(c(\de_2),\de_2)}(b\,i)$. 

\medskip
(2) can be proved by the same argument as in (1-1) above. 

\end{proof}

\begin{corollary}\label{independence_B_dom_B_pO}
$B_\dom(z)$ and $B_{\pO}(z)$ are affinely independent. 
\end{corollary}

As for the residues, both $\Rd{-n-1}\dom$ and $\Rd{-n-1}\pO$ are equal to $\mbox{\rm Vol}(\pO)$ up to multiplication by constants, whereas \eqref{residue_closed_-m-2} and \eqref{residueOmega-n-3_BH} imply that $\Rd{-n-3}\dom$ and $\Rd{-n-3}\pO$, which have the same order of homogeneity, are independent.

\subsection{Relative energies and relative residues of compact bodies}\label{subsect_relative}
Let $\Omega$ be a smooth compact body, i.e. a compact $n$-dimensional submanifold of $\RR^n$ with a smooth boundary $\pO$.  
Put 
\begin{equation}\label{Irelative}
I_{\Omega,\partial\Omega}(z)=\int_{\pO} \dx  \int_\O |x-y|^z\,\dy .
\end{equation}
It is well-defined and holomorphic on $\R z>-n$. 
Just like \eqref{IOmegaboundary}, \eqref{div_y} implies that 
$I_{\Omega,\partial\Omega}(z)$ can be expressed by the boundary integral;  
\begin{equation}\label{mixed_energy}
I_{\Omega,\pO}(z)=\frac1{z+n}\iint_{\pO\times \pO}|x-y|^z\langle y-x, \uon{y}\rangle\,\dx \dy 
\end{equation}
on $\R z>-n$. 
By extending the domain by means of analytic continuation, we obtain a meromorphic function defined on the whole complex plane $\CC$ with only simple poles. 
We call it the {\em  meromorphic relative energy function of $\Omega$} and denote it by $B_{\Omega,\pO}(z)$. 

\medskip
The expression \eqref{mixed_energy} of $I_{\Omega,\pO}(z)$ has two kinds of local version since the weight $\langle y-x, \uon{y}\rangle$ is not symmetric in $x$ and $y$; 
\begin{equation}\label{relativeBboundarylocal}
B_{\Omega,\pO,x}^{\partial\>{\rm loc}}(z)=\frac1{z+n}\int_{\pO}|x-y|^z\langle y-x, \uon{y}\rangle\,\dy  \quad x\in\pO
\notag
\end{equation}
and\footnote{We put the symbol $\partial$ in $B_{\Omega,\pO}^{\partial\>{\rm loc}}(x;z)$ since $x$ was originally a point in $\pO$ in \eqref{Irelative}. }
\begin{equation}\label{relativeBlocal}
B_{\Omega,\pO,y}^{{\rm loc}}(z)=\frac1{z+n}\int_{\pO}|x-y|^z\langle y-x, \uon{y}\rangle\,\dx , \quad y\in\pO,
\notag
\end{equation}
where the right hand sides are understood to be meromorphic functions obtained by means of analytic continuation as in the previous cases. 

\begin{definition} \rm 
We denote the residues of $B_{\Omega,\pO}(z)$, $B_{\Omega,\pO,x}^{\partial\>{\rm loc}}(z)$, and $B_{\Omega,\pO,y}^{{\rm loc}}(z)$ at $z=k$ by $\Rsub{\Omega, \pO}(k)$, $\Rsub{\Omega,\pO,x}^{\partial\>{\rm loc}}(k)$, and $\Rsub{\Omega,\pO,y}^{{\rm loc}}(k)$ respectively, and call them the {\em ralative residue}, {\em boundary local relative residue} and {\em local relative residue} of $\Omega$ respectively. 
\end{definition}
We have 
\begin{equation}\label{relative_residu_in_two_ways}
\Rsub{\Omega, \pO}(k)=\int_{\pO} \Rsub{\Omega,\pO,x}^{\partial\>{\rm loc}}(k)\, \dx =\int_{\pO} \Rsub{\Omega,\pO,y}^{{\rm loc}}(k)\, \dy . 
\notag
\end{equation}

Define $\lambda_{{\rm rel}}^\partial$ and $\lambda_{{\rm rel}}$ from $\pO\times\pO$ to $\RR$ by \[\lambda_{{\rm rel}}^\partial(x,y)=\langle y-x, \uon{y}\rangle, \quad \lambda_{{\rm rel}}(x,y)=\langle x-y, \uon{x}\rangle.\] 
Both are $O(|x-y|^2)$, which implies $\psi_{\pO,\lambda_{{\rm rel}}^\partial,x}(t)$ and $\psi_{\pO,\lambda_{{\rm rel}},x}(t)$ given by \eqref{psiMrhox} satisfy 
\[
{\psi_{\pO,\lambda_{{\rm rel}}^\partial,x}}^{(n-1)}(0)=0, \quad {\psi_{\pO,\lambda_{{\rm rel}},x}}^{(n-1)}(0)=0.
\]
Then \eqref{residue_I_phi}  
implies that the residues of $B_{\Omega,\pO}(z)$, $B_{\Omega,\pO,x}^{\partial\>{\rm loc}}(z)$, and $B_{\Omega,\pO,y}^{{\rm loc}}(z)$ at $z=-(n-1)$ are equal to $0$. 
\begin{lemma} 
None of $B_{\Omega,\pO}(z)$, $B_{\Omega,\pO,x}^{\partial\>{\rm loc}}(z)$, and $B_{\Omega,\pO,y}^{{\rm loc}}(z)$ have poles at $z=-n+1$. 
\end{lemma}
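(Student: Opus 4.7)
The plan is to reduce each of the three functions to $1/(z+n)$ times a $\lambda$-weighted Brylinski-type function on $\pO$, and then invoke the residue formula~\eqref{RsubMrhoddtpsix} together with the vanishing of $\psi^{(n-1)}(0)$ already recorded just above the lemma statement.

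By~\eqref{mixed_energy} and the two local definitions, I would write
\[
B_{\Omega,\pO}(z)=\frac{1}{z+n}\,B_{\pO,\lambda^\partial}(z),\qquad
B_{\Omega,\pO,x}^{\partial\>{\rm loc}}(z)=\frac{1}{z+n}\,B_{\pO,\lambda^\partial,x}^{\,\rm loc}(z),
\]
as meromorphic functions on $\CC$. For the third function the integration is over $x\in\pO$ with $y$ fixed, so after renaming the variables the weight becomes $\langle x-y,\uon{x}\rangle=\lambda(x,y)$, giving $B_{\Omega,\pO,y}^{\,\rm loc}(z)=(z+n)^{-1}B_{\pO,\lambda,y}^{\,\rm loc}(z)$, with $y$ now playing the role of the fixed point. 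Since $-n+1+n=1\neq 0$, the prefactor $1/(z+n)$ is regular at $z=-n+1$, so the problem reduces to showing that each of the three weighted Brylinski functions on $\pO$ has no pole at $z=-(n-1)=-\dim\pO$.

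The dimension of $\pO$ is $m=n-1$, so by Definition~\ref{def_B_M_etc} the first candidate pole of these Brylinski functions sits at $z=-(n-1)$ (the $j=0$ case), and~\eqref{RsubMrhoddtpsix} gives
\[
\Rsub{\pO,\lambda,x}^{\,\rm loc}(-(n-1))=\frac{1}{(n-2)!}\,\psi_{\pO,\lambda,x}^{(n-1)}(0),
\]
and analogously for $\lambda^\partial$. By the observation preceding the lemma, both $\psi_{\pO,\lambda^\partial,x}^{(n-1)}(0)$ and $\psi_{\pO,\lambda,x}^{(n-1)}(0)$ vanish: the weights $\lambda^\partial$ and $\lambda$ are $O(|x-y|^2)$ on the diagonal, because the chord $y-x$ is tangent to $\pO$ at $x$ (or at $y$) to leading order, so $\psi_{\pO,\lambda^\partial,x}(t)=O(t^{n+1})$ and all of its derivatives through order $n$ vanish at $t=0$; the same applies to $\lambda$. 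Integrating the pointwise residue over $\pO$ gives $\Rsub{\pO,\lambda^\partial}(-(n-1))=0$ as well. Thus each of the three weighted Brylinski functions is regular at $z=-(n-1)$, and combined with the regularity of $1/(z+n)$ at that point, all three of $B_{\Omega,\pO}(z)$, $B_{\Omega,\pO,x}^{\partial\>{\rm loc}}(z)$, $B_{\Omega,\pO,y}^{\,\rm loc}(z)$ are regular there.

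There is no real obstacle: the substantive input $\psi^{(n-1)}(0)=0$ has already been verified above the lemma. The only subtle point is the handling of $B_{\Omega,\pO,y}^{\,\rm loc}(z)$, which is precisely why the text introduced the second weight $\lambda$ in addition to $\lambda^\partial$: a variable relabeling converts the asymmetric weight $\langle y-x,\uon{y}\rangle$ into $\langle x-y,\uon{x}\rangle$, and the same $O(|x-y|^2)$ argument then applies.
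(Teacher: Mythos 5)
Your proposal is correct and follows essentially the same route as the paper: the lemma there is an immediate consequence of the preceding observation that $\lambda^\partial$ and $\lambda$ are $O(|x-y|^2)$, hence $\psi_{\pO,\lambda^\partial,x}^{(n-1)}(0)=\psi_{\pO,\lambda,x}^{(n-1)}(0)=0$, so by \eqref{residue_I_phi} the residues at $z=-(n-1)$ vanish, and the prefactor $1/(z+n)$ is regular there. You have merely made explicit the reduction and the relabeling needed for $B_{\Omega,\pO,y}^{\rm loc}$, which matches the paper's intent.
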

\begin{corollary}
The meromorphic relative energy function $B_{\Omega,\pO}(z)$ and the local versions $B_{\Omega,\pO,x}^{\partial\>{\rm loc}}(z)$ and $B_{\Omega,\pO,y}^{{\rm loc}}(z)$ have possible simple poles at $z=-n$ and $z=-n-(2j+1)$ $(j=0,1,2,\dots)$. 
\end{corollary}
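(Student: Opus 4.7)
The plan is to use the boundary integral representation \eqref{mixed_energy} to reduce the pole analysis of $B_{\Omega,\pO}(z)$ to the $\la$-weighted meromorphic energy framework of Definition \ref{def_B_M_etc} applied to the closed $(n-1)$-dimensional submanifold $\pO$ of $\RR^n$.

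First I would record that, by analytic continuation from $\R z > -n$, \eqref{mixed_energy} gives the identity of meromorphic functions
\[
B_{\Omega,\pO}(z) \;=\; \frac{1}{z+n}\, B_{\pO,\la^\partial}(z),
\]
where $\la^\partial(x,y)=\ip{y-x}{\uon{y}}$ is smooth on $\pO\times\pO$. Since $\dim\pO=n-1$, Definition \ref{def_B_M_etc} implies that $B_{\pO,\la^\partial}(z)$ is meromorphic on $\CC$ with at most simple poles, and these can occur only at $z=-(n-1)-2j$ for $j\in\NN\cup\{0\}$, i.e. at $-n+1,\,-n-1,\,-n-3,\,\dots$. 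Multiplication by $1/(z+n)$ introduces one further candidate simple pole at $z=-n$, so the pole set of $B_{\Omega,\pO}(z)$ is contained in $\{-n\}\cup\{-n+1,\,-n-1,\,-n-3,\,\dots\}$. By the immediately preceding Lemma, $B_{\Omega,\pO}(z)$ is in fact regular at $z=-n+1$, leaving precisely the claimed pole set $\{-n\}\cup\{-n-(2j+1):j\ge 0\}$, all at most simple.

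The same argument applies verbatim to the two local versions, using
\[
B_{\Omega,\pO,x}^{\partial\,{\rm loc}}(z) \;=\; \frac{1}{z+n}\, B_{\pO,\la^\partial,x}^{\,\rm loc}(z),\qquad
B_{\Omega,\pO,y}^{{\rm loc}}(z) \;=\; \frac{1}{z+n}\, B_{\pO,\la,y}^{\,\rm loc}(z),
\]
where $\la(x,y)=\ip{x-y}{\uon{x}}$. Definition \ref{def_B_M_etc} supplies the same pole structure for the $\la^\partial$- and $\la$-weighted local potentials on $\pO$, and the preceding Lemma again rules out the $z=-n+1$ candidate. The only genuinely non-formal ingredient is this cancellation at $z=-n+1$; it in turn rests on the fact that both weights $\la^\partial$ and $\la$ vanish to second order on the diagonal of $\pO\times\pO$, which via \eqref{residue_I_phi} forces the residue of the underlying $\la$-weighted (local) energy at $z=-(n-1)$ to vanish, exactly the content of the preceding Lemma.
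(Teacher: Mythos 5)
Your proposal is correct and follows essentially the same route as the paper: the paper also reads the relative energy off the boundary representation \eqref{mixed_energy} as $\frac{1}{z+n}$ times a $\la$-weighted meromorphic energy of the $(n-1)$-dimensional manifold $\pO$, so that the candidate poles are $z=-n$ together with $z=-(n-1)-2j$, and then removes the candidate at $z=-n+1$ via the preceding lemma, which rests exactly on the second-order vanishing of $\la^\partial$ and $\la$ on the diagonal and \eqref{residue_I_phi}. No gaps.
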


\subsection{Relative energy in terms of parallel bodies}\label{subsubsection_parallel_bodies}
%
We introduce parallel bodies since the relative energy and residues can be obtained from those of a compact body $\dom$ by regarding $\dom\times\pO$ as half the ``{\sl derivative}'' of $\dom\times\dom$ with respect to parallel body operation. 
Put $\displaystyle \Omega_\delta=\bigcup_{x\in \Omega}B^n _\delta(x)$ for $\delta\ge0$. It is called (outer) {\em $\delta$-parallel body} in convex geometry when $\Omega$ is convex. 
Since $\pO$ is compact, there is a positive number $\de_0$ such that 
\[
F\colon\pO\times[0,\de_0]\ni(x,t)\mapsto x+t\uon{x}\in\RR^n
\]
is an embedding. 
If $0\le\de\le\de_0$ then 
$\partial\Omega_\de$ is a regular hypersurface. 

We introduce a local expression of the Steiner formula, Weyl tube formula, and Gray's half-tube formula. 
Let $\ka_1,\dots,\ka_{n-1}$ be the principal curvatures of $\pO$, and \[
S_k(x)=\sum_{|I|=k}\,\prod_{i\in I}\ka_i(x) \quad (0\le k\le n-1)
\]
be the $k$-th elementary symmetric polynomials of $\ka_i(x)$. 
Then it is known that 
the volume element of $\pO_t=\{x+t\uon{x}\,:\,x\in\pO\}$ $(0\le t\le \de_0)$ is gievn by 
\begin{equation}\label{vol_elem_pO_t}
\left(\prod_{i=1}^{n-1}(1-t\ka_i)\right)dv
=\left(\sum_{k=0}^{n-1} (-1)^kS_{k}(x)t^k\right) \dx 
\end{equation}
where $dv $ is the volume element of $\pO$. 

\begin{remark}\label{remark_convention_orientation_normal}
\rm 
The sign of the principal curvatures depends on the choice of the unit normal vector.  
We use the {\sl outward} unit normal vector $\uon{x}$. 
In our convention the principal curvatures of the unit sphere are all $-1$ (cf. \cite{ON}).
\end{remark}

Integrating \eqref{vol_elem_pO_t} from $t=0$ to $r$ $(0\le r\le \de_0)$ we have 
\begin{equation}\label{Steiner_principal_curv}
\mbox{Vol}(\dom_r)=\mbox{Vol}(\dom)+\sum_{k=1}^n\left(\frac{(-1)^{k-1}}k\int_{\pO}S_{k-1}(x)\dx \right)r^k.
\end{equation}
Put
\begin{equation}\label{C_k_Omega}
C_k(\dom)=\frac{(-1)^{n-1-k}}{(n-k)\omega_{n-k}}\int_{\pO} S_{n-1-k}(x)\,\dx  \quad (0\le k\le n-1), \quad C_n(\dom)=\mbox{Vol}(\dom),
\end{equation}
where $\omega_j$ is the volume of the $j$-dimensional unit ball, then 
\begin{equation}\label{Steiner_smooth}
\mbox{Vol}(\dom_r)=
\sum_{k=0}^n\omega_kC_{n-k}(\dom)r^k \quad (0\le r\le\de_0).
\end{equation}
The $C_k(\dom)$ are called {\em Lipschitz-Killing curvatures}. 
Note that  
\begin{equation}\label{C_k_Omega_n-1_0}
C_{n-1}(\dom)=\frac12\mbox{Vol}(\pO), \> 
C_0(\dom)=\chi(\dom),
\notag
\end{equation}
where the last equality follows from Gauss-Bonnet-Chern theorem. 

If $\dom$ is convex, $C_k(\dom)$ are equal to the {\em intrinsic volumes} $V_k(\dom)$ which are defined by the {\em Steiner formula} in convex geometry\footnote{The $V_k(\dom)$ can also be interpreted as ``{\sl Quermassintegral''}, the integral of the $(n-k)$ dimensional volumes of the image of projection to $(n-k)$ dimensional hyperplanes, where the integration is done on the Grassmannian; 
\[
V_{k}(\dom)=\left(\begin{array}{c}n \\ k\end{array}\right)
\frac{\omega_n}{\omega_k\omega_{n-k}}\int_{G(n,n-k)}{{\rm Vol}_{n-k}}(\pi_L(\dom))d\mu^n_{n-k}(L) 
\]
cf. \cite{SW}, page 201. We do not use the above interpretation in this article. }; 
\begin{equation}\label{Steiner_formula}
\mbox{Vol}(\dom_r)=\sum_{k=0}^n\omega_kV_{n-k}(\dom)r^k. 
\end{equation}
We remark that we do not need smoothness in the Steiner formula. 
See Subsection \ref{subsubsection_Lipschitz-Killing_curv_intrinsic_volum} for the properties of intrinsic volumes. 
The reader is referred to, for example, \cite{Sch,M,RZ} for the roles of Lipschitz-Killing curvatures (intrinsic volumes) in differential geometry or convex geometry.

\begin{theorem}\label{thm_relative_diff_body}
The meromorphic relative energy function is half the derivative of the meromorphic energy function of the $\e$-parallel body by $\e$ at $\e=0$;  
\begin{equation}\label{relative_energy}
\displaystyle B_{\Omega,\pO}(z)=\frac12\left.\pd{}{\e}\,B_{\Omega_\e}(z)\right|_{\e=0} 
\end{equation}
when $z$ is not a pole of $B_{\Omega_\e}$, i.e. if $z\ne-n,-n-1-2j$ $(j=0,1,2,\dots)$. 
\end{theorem}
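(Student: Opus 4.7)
I would first establish the identity on the half-plane $\R z>-n$ by a direct first-variation computation, and then extend to the rest of $\CC\setminus\{-n,-n-1-2j\mid j\ge0\}$ by analytic continuation.

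For $\R z>-n$ the integrals defining both sides are absolutely convergent. I would parametrize the one-sided tube $\Omega_\e\setminus\Omega$ by the normal flow $F(w,t)=w+t\uon{w}$, $w\in\pO$, $t\in[0,\e]$, whose Jacobian $J(w,t)=\prod_{i=1}^{n-1}(1-t\ka_i(w))$ is provided by \eqref{vol_elem_pO_t}. Decomposing $\Omega_\e\times\Omega_\e$ into $\Omega^2$, the two cross pieces, and the corner, and using the symmetry $|x-y|^z=|y-x|^z$, one obtains
\begin{equation*}
B_{\Omega_\e}(z)-B_\Omega(z)=2\int_0^\e dt\int_\Omega dy\int_{\pO}|y-F(w,t)|^zJ(w,t)\,dw+\iint_{(\Omega_\e\setminus\Omega)^2}|x-y|^z\,dxdy.
\end{equation*}
The corner integral is $o(\e)$ since the integrand is in $L^1(\Omega_{\de_0}\times\Omega_{\de_0})$ while $\Omega_\e\setminus\Omega$ has measure $O(\e)$ collapsing to $\pO$ as $\e\to 0$. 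The cross integral is an absolutely continuous function of $\e$ whose $\e$-derivative at $\e=0$, by the fundamental theorem of calculus, equals $2\int_\Omega dy\int_{\pO}|y-w|^z\,dw=2I_{\Omega,\pO}(z)=2B_{\Omega,\pO}(z)$. This proves \eqref{relative_energy} in this range.

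For general $z$ the two sides are meromorphic in $z$ with simple poles only at $z=-n,-n-1-2j$, so the identity theorem finishes the argument. To give rigorous content to the left-hand side at a generic point, I would pass to the boundary representation \eqref{IOmegaboundary}, parametrizing $\pO_\e$ by $F_\e(x)=x+\e\uon{x}$ with surface Jacobian $A_\e(x)=\prod_i(1-\e\ka_i(x))$. The resulting boundary double integral has an integrand smooth in $\e$ on $[0,\de_0]$ and absolutely convergent on $\R z>-n-1$. Iterating the one-variable regularization scheme of Subsection~\ref{subsection_preliminaries_analysis} on the radial variable along the diagonal peels off a finite number of $\e$-smooth Taylor coefficients and leaves a holomorphic remainder in $z$, so the $\e$-derivative at $\e=0$ commutes with the regularization and remains meromorphic in $z$ with poles exactly at the prescribed set.

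\textbf{The main obstacle} is the uniformity needed to interchange $\p/\p\e$ at $\e=0$ with the analytic continuation in $z$, i.e.\ showing that the singular part of the Laurent expansion of $B_{\Omega_\e}(z)$ at each pole varies smoothly in $\e$ so that differentiation introduces no spurious singularity. Because the poles of $B_{\Omega_\e}(z)$ stay at the fixed values $z=-n,-n-1-2j$ for all small $\e$, and the residues are integrals over $\pO_\e$ of polynomial expressions in the principal curvatures that depend smoothly on $\e$, this smoothness can be read off directly from the iterated-Stokes representation and poses no deeper analytic difficulty.
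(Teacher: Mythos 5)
Your argument is correct and follows essentially the same route as the paper: the same decomposition of $\Omega_\e\times\Omega_\e$ into $\Omega\times\Omega$, two cross terms, and a corner term, the same identification of the first variation of the cross term with $2I_{\Omega,\pO}(z)$, and the same passage to general $z$ by analytic continuation. The only difference is that the paper performs the first-variation computation on $\R z>0$, where the integrand is uniformly bounded and the corner term is trivially $O(\e^2)$, rather than on all of $\R z>-n$ as you do; since the identity theorem only needs the identity on some nonempty open set, your extra generality (which would require a uniform-integrability argument for the corner term and a dominated-convergence argument for the cross term that you do not spell out) is unnecessary.
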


\begin{proof}
(1) Assume $0\le\e\le\de_0$ and $\R z>0$. 
Put $N_\e=F(\pO\times(0,\e])=\dom_\e\setminus\dom$. 
Then 
\begin{equation}\label{f_der_B_Omega_epsilon}
\begin{array}{l}
\displaystyle \iint_{\dom_\e\times\dom_\e}{|x-y|}^zdxdy-\iint_{\dom\times\dom}{|x-y|}^zdxdy 
\displaystyle =2\int_\dom \left(\int_{N_\e}{|x-y|}^zdx\right)dy
+\iint_{N_\e\times N_\e}{|x-y|}^zdxdy
\end{array}
\end{equation}
Let $\phi_1(\e)$ and $\phi_2(\e)$ be the first and second terms of the right hand side of the above respectively. 

First note 
\[
2\int_\dom dy \int_{N_\e}{|x-y|}^zdx
=2\int_\dom dy\int_0^\e dr \int_{\pO}{|x+r\uon{x}-y|}^z\left(\prod_{i=1}^{n-1}(1-r\ka_i(x))\right)\dx .
\]
The mean value theorem implies that for each $y$ in $\dom$ there is a positive number $h$ $(0<h<1)$ such that 
\[
\begin{array}{l}
\displaystyle \frac1\e \int_0^\e dr \int_{\pO}{|x+r\uon{x}-y|}^z\left(\prod_{i=1}^{n-1}(1-r\ka_i(x))\right)\dx 
=\int_{\pO}{|x+h\e\uon{x}-y|}^z\left(\prod_{i=1}^{n-1}(1-h\e\ka_i(x))\right)\dx , 
\end{array}
\]
which implies
\[
\lim_{\e\to0}\frac1\e \int_0^\e dr \int_{\pO}{|x+r\uon{x}-y|}^z\left(\prod_{i=1}^{n-1}(1-r\ka_i(x))\right)\dx 
=\int_{\pO}{|x-y|}^z \dx.
\]
Suppose $\R z>0$. The right hand side above is uniformly bounded and continuous in $y$. 
As $\dom$ is compact, 
\begin{equation}\label{f_der_B_Omega_epsilon_1}
\lim_{\e\to0}\frac{\phi_1(\e)}\e=2\int_\dom dy\int_{\pO}{|x-y|}^z \dx .
\end{equation}
On the other hand, since $\phi_2(\e)$ can be expressed by 
\[
\phi_2(\e)=\int_0^\e dr_1\int_0^\e dr_2\iint_{\pO\times \pO}
{|x-y+r_1\uon{x}-r_2\uon{y}|}^z
\left(\prod_{i=1}^{n-1}(1-r_1\ka_i(x))\right)
\left(\prod_{j=1}^{n-1}(1-r_2\ka_j(y))\right)\dx \dy 
\]
and the integrand is uniformly bounded, we have $\phi_2(\e)=O(\e^2)$ and hence 
\begin{equation}\label{f_der_B_Omega_epsilon_2}
\lim_{\e\to0}\frac{\phi_2(\e)}\e=0.
\end{equation}

\eqref{f_der_B_Omega_epsilon}, \eqref{f_der_B_Omega_epsilon_1}, and \eqref{f_der_B_Omega_epsilon_2} imply
\[
\left.\pd{}{\e}\,B_{\Omega_\e}(z)\right|_{\e=0}=2\int_\dom dy\int_{\pO}{|x-y|}^z \dx 
=2B_{\Omega,\pO}(z)
\]
on $\R z>0$. 
By analytic continuation, the above equality can be extended to the whole complex plane except for the poles of $B_{\Omega_\e}$, which completes the proof. 
\end{proof}

Since $B_{B^n_\e}(z)=(1+\e)^{z+2n}B_{B^n}(z)$, \eqref{beta_ball} implies 
\begin{corollary}\label{cor_rel_ball}
The meromorphic relative energy function of a unit ball is given by 
\[
B_{B^n,S^{n-1}}(z)=\frac{2^{z+n-1}(z+2n)\,o_{n-1}o_{n-2}}{(n-1)(z+n)}\,B\biggl(\frac{z+n+1}2,\,\frac{n+1}2\biggr).
\]
\end{corollary}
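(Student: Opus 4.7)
The plan is to apply Theorem \ref{thm_relative_diff_body} directly, combined with the explicit formula \eqref{beta_ball} for $B_{B^n}(z)$ and the scaling property of Brylinski's beta function.

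First, I would observe that when $\dom = B^n$ is the closed unit ball, the $\e$-parallel body is simply the ball of radius $1+\e$ centered at the origin, i.e.\ $\dom_\e = (1+\e) B^n$. Since $B_{c\dom}(z) = c^{z+2n} B_\dom(z)$ for every positive scalar $c$ (the homogeneity of $B_\dom$ of degree $z+2n$ recorded at the start of Section \ref{subsec_closed_submfds}), this gives the clean identity
\[
B_{B^n_\e}(z) = (1+\e)^{z+2n} B_{B^n}(z)
\]
on the half-plane where $B_{B^n}$ is holomorphic, and meromorphically on the rest of $\CC$. Differentiating in $\e$ and evaluating at $\e=0$ yields
\[
\left.\pd{}{\e}\,B_{B^n_\e}(z)\right|_{\e=0} = (z+2n)\,B_{B^n}(z).
\]

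Next, I would invoke Theorem \ref{thm_relative_diff_body} to identify $B_{B^n,S^{n-1}}(z)$ with half of this derivative. Substituting the closed form \eqref{beta_ball} for $B_{B^n}(z)$ produces
\[
B_{B^n,S^{n-1}}(z) = \frac{z+2n}{2} \cdot \frac{2^{z+n} o_{n-1} o_{n-2}}{(n-1)(z+n)} \, B\!\left(\frac{z+n+1}{2},\, \frac{n+1}{2}\right),
\]
which after absorbing the factor of $\tfrac12$ into $2^{z+n}$ matches the claimed expression.

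There is essentially no obstacle: the computation is mechanical once Theorem \ref{thm_relative_diff_body} is in hand. The only point to flag is that the identity in the theorem is stated only away from the poles of $B_{B^n}$, but since both sides are meromorphic functions of $z$ agreeing on an open set, the formula extends by analytic continuation to all of $\CC$; in particular the simple pole at $z=-n$ of the right-hand side (where the factor $z+2n$ evaluates to $n\neq 0$) is consistent with the pole structure of $B_{B^n,S^{n-1}}$ established in the preceding corollary.
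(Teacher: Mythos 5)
Your proof is correct and is essentially the paper's own argument: the paper likewise notes $B_{B^n_\e}(z)=(1+\e)^{z+2n}B_{B^n}(z)$ and applies Theorem \ref{thm_relative_diff_body} together with \eqref{beta_ball}. The arithmetic ($\tfrac12\cdot 2^{z+n}=2^{z+n-1}$ and the factor $z+2n$ from differentiating the power of $1+\e$) checks out, and your remark about extending past the excluded values of $z$ by analytic continuation is a fair point the paper leaves implicit.
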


\subsection{Relative residues and mean curvature}\label{subsect_relative_mean_curv}
%
Theorem \ref{thm_relative_diff_body} implies 
\begin{corollary}\label{cor_residue_ralative}
The relative residues are half the derivatives of those of the $\e$-parallel body by $\e$ at $\e=0$;
\begin{equation}\label{relative_residue_der}
\displaystyle \Res_{\Omega,\pO}(k)=\frac12\left.\pd{}{\e}\,\Res_{\Omega_\e}(k)\right|_{\e=0} \qquad (k=-n, -n-(2j+1), \>\> j=0,1,2,\dots). 
\end{equation}
\end{corollary}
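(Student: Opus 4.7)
The plan is to take residues at $z=k$ on both sides of the identity from Theorem \ref{thm_relative_diff_body},
\[
B_{\Omega,\pO}(z)=\frac12 \left.\pd{}{\e}\,B_{\Omega_\e}(z)\right|_{\e=0}.
\]
Both sides are meromorphic functions of $z$ with potential simple poles at $z=-n$ and $z=-n-1-2j$ $(j\ge 0)$, and the identity holds on the complement of these poles, which is dense; hence by the identity principle for meromorphic functions they agree everywhere, so their residues at $z=k$ coincide. The residue of the left-hand side at $z=k$ is $\Res_{\Omega,\pO}(k)$ by definition.

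To extract the residue on the right-hand side, I will need to swap the operations $\lim_{z\to k}(z-k)(\,\cdot\,)$ and $\partial/\partial\e|_{\e=0}$. Introduce $G(\e,z):=(z-k)B_{\Omega_\e}(z)$, which for each $\e\in[0,\de_0]$ is holomorphic in $z$ near $k$ with $G(\e,k)=\Res_{\Omega_\e}(k)$. Granted joint smoothness of $G$ in $(\e,z)$ near $(0,k)$, the operations of $z$-evaluation at $k$ and $\e$-differentiation commute:
\[
\lim_{z\to k}(z-k)\left.\pd{}{\e}B_{\Omega_\e}(z)\right|_{\e=0}=\left.\pd{}{\e}G(\e,k)\right|_{\e=0}=\left.\pd{}{\e}\Res_{\Omega_\e}(k)\right|_{\e=0},
\]
which combined with the previous paragraph gives \eqref{relative_residue_der}.

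The main obstacle is establishing this joint smoothness of $G(\e,z)$ near $(\e,z)=(0,k)$. I would argue that the hypersurfaces $\pO_\e$ form a smooth one-parameter family on $[0,\de_0]$, with volume element polynomial in $\e$ via \eqref{vol_elem_pO_t}; that the auxiliary smooth cutoff at distance $\de$ entering the regularization \eqref{Iphiz} can be chosen independent of $\e$; and that the function $\varphi$ whose derivatives at $0$ yield the residues (cf.\ \eqref{residue_I_phi}) varies smoothly with $\e$ by the parametric version of Lemma \ref{even_odd}. The analytic continuation procedure \eqref{Iphiz} then preserves this joint smoothness on a neighbourhood of any non-pole $z$, and at the pole $z=k$ one checks that multiplication by $(z-k)$ removes the singularity uniformly in $\e$, giving the required joint regularity of $G$ and legitimizing the interchange.
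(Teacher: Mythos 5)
Your strategy is workable, but you take a genuinely different and harder route than the paper, and the hard part is left as a sketch. The paper extracts the residue by the Cauchy integral formula, $\Res_{\Omega,\pO}(k)=\frac1{2\pi i}\oint_{|w-k|=\rho}B_{\Omega,\pO}(w)\,dw$ with $0<\rho<1$, so the circle of integration avoids every pole; substituting \eqref{relative_energy} and differentiating under the integral sign (which only requires continuity and integrability of $B_{\Omega_\e}(w)$ and $\partial B_{\Omega_\e}/\partial\e\,(w)$ on that circle, where everything is holomorphic in $w$) gives $\frac12\,\partial_\e \Res_{\Omega_\e}(k)|_{\e=0}$ at once. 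You instead write the residue as $\lim_{z\to k}(z-k)B(z)$ and interchange that limit with $\partial_\e|_{\e=0}$, which forces you to establish joint regularity of $G(\e,z)=(z-k)B_{\Omega_\e}(z)$ at the pole itself. That is precisely the step you only assert (``I would argue that\dots''), and it is not routine: even for a single compact body the smoothness of the relevant function $\Psi_z(t)$ on $[0,d]$ is flagged in the proof of Theorem \ref{thm_M_inv_body} as the nontrivial difficulty, resolved only in \cite{OS2'}, and your argument needs a version of that which is uniform in the additional parameter $\e$. A secondary issue: your appeal to the identity principle in the first paragraph presupposes that $\frac12\,\partial_\e B_{\Omega_\e}(z)|_{\e=0}$ is already known to be a meromorphic function of $z$ with a residue at $k$, which is exactly what remains to be shown; Theorem \ref{thm_relative_diff_body} gives the identity pointwise off the poles and nothing more. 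So either switch to the contour representation of the residue, which makes the interchange a standard differentiation under the integral sign away from the singularity, or supply a genuine proof of the joint smoothness of $G$ near $(\e,z)=(0,k)$.
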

\begin{proof}
\eqref{relative_energy} implies 
\begin{eqnarray}
\Res_{\dom,\pO}(k)&=&\displaystyle \frac1{2\pi i}\oint_{|w-k|=\rho}B_{\dom,\pO}(w)dw \nonumber 
=\displaystyle \frac1{2\pi i}\oint_{|w-k|=\rho}\frac12\left.\pd{}{\e}\,B_{\Omega_\e}(w)\right|_{\e=0}dw
\label{relative_residue_1}
\end{eqnarray}
for $\rho$ $(0<\rho<1)$. 
Since both $B_{\dom_\e}(w)$ and $(\partial B_{\dom_\e}/\partial \e)(w)$ are continuous and integrable on $S^1_\rho(k)=\{w\in\CC\,:\,|w-k|=\rho\}$ for each $\e$ with $0\le\e\le\de_0$, 
\[
\Res_{\dom,\pO}(k)= \left. \frac12\pd{}{\e}\left(\frac1{2\pi i}\oint_{|w-k|=\rho} B_{\Omega_\e}(w)dw\right)\right|_{\e=0}
= \left. \frac12\pd{}{\e}\Res_{\dom_\e}(k)\right|_{\e=0},
\]
which shows \eqref{relative_residue_der}. 
\end{proof}

As a corollary, one obtains $\Rd{-n}{\O,\pO}$ and $\Rd{-n-1}{\O,\pO}$ using \eqref{Steiner_principal_curv}, \eqref{residue_Omega_-n} and \eqref{residue_Omega_-n-1}, 
and $\Rd{-n-3}{\O,\pO}$ using \eqref{vol_elem_pO_t} and \eqref{residueOmega-n-3_BH}. 
These results can be refined by finding (boundary) local residues as follows.

\begin{theorem}\label{prop_relative_residues}
\begin{enumerate}
\item The boundary local relative residue at $z=-n$ is given by 
\[
\Rsub{\Omega,\pO,x}^{\partial\>{\rm loc}}(-n)=\frac{o_{n-1}}2.
\]
Therefore  
\begin{equation}\label{relative_residue_-n}
\Rd{-n}{\O,\pO}=\frac{o_{n-1}}2\,\mbox{\rm Vol}(\pO).
\end{equation}
\item The boundary local and local relative residues at $z=-n-1$ are given by 
\begin{equation}\label{local_relative_residue_mean_curv}
\Rsub{\Omega,\pO,x}^{\partial\>{\rm loc}}(-n-1)
=\Rsub{\Omega,\pO,x}^{{\rm loc}}(-n-1)
=\frac{o_{n-2}}{2(n-1)}\,H.
\end{equation}
Therefore  
\begin{equation}\label{relative_residue_-n-1}
\Rd{-n-1}{\O,\pO}= \frac{o_{n-2}}{2(n-1)} \int_{\pO}H(x)\,\dx .
\end{equation}
\item The boundary local and local relative residues at $z=-n-3$ satisfy 
\[
\begin{array}{rcl}
\displaystyle \frac12\left(-\Rsub{\Omega,\pO,x}^{\partial\>{\rm loc}}(-n-3)+3\Rsub{\Omega,\pO,x}^{{\rm loc}}(-n-3)\right)
&=&\displaystyle \frac{o_{n-2}}{48(n^2-1)}\biggl( 4\sum_i{\kappa_i}^3-{\Big(\sum_j\kappa_j\Big)}^3\,\biggr),
\end{array}
\]
where $\kappa_j$ are the principal curvatures of $\pO$. 
Therefore
\begin{equation}\label{relative_residue_-n-3}
\Rd{-n-3}{\O,\pO}=\frac{o_{n-2}}{48(n^2-1)}\int_{\pO}\biggl(4\sum_i{\kappa_i}^3-{\Big(\sum_j\kappa_j\Big)}^3\biggr)\,\dx .
\end{equation}
\item The difference between boundary local and local relative residues  at $z=-n-3$ is given by
\[
\Rsub{\Omega,\pO,x}^{\partial\>{\rm loc}}(-n-3)-\Rsub{\Omega,\pO,x}^{{\rm loc}}(-n-3)
=\frac{o_{n-2}}{12(n^2-1)}\,\Delta H. 
\]
\end{enumerate}
\end{theorem}

We remark that the local relative residue at $z=-n$ is not necessarily constant unlike the boundary local relative residue, as can be verified by numerical computation when $\pO$ is an ellipse. 

\begin{proof}
(1) First remark that $\Rsub{\Omega,\pO,x}^{\partial\>{\rm loc}}(-n)$ is scale invariant. By Corollary \ref{cor_residue_ralative}, the first equality of \eqref{residue_Omega_-n} and \eqref{Steiner_principal_curv}, we have $\Rsub{\Omega,\pO}(-n)=(o_{n-1}/2){\rm Vol}(\pO)$. 
When $\dom$ is an $n$-ball, there holds 
\[
\frac{o_{n-1}^{\,2}}2
=\Rsub{B^n,S^{n-1}}(-n)=\int_{S^{n-1}}\Rsub{B^n,S^{n-1},x}^{\partial\>{\rm loc}}(-n)\,dv(x)
=o_{n-1}\,\Rsub{B^n,S^{n-1},x}^{\partial\>{\rm loc}}(-n),
\]
where the first equality follows from Corollary \ref{cor_rel_ball} 
and the last from the symmetry, 
which implies $\Rsub{B^n,S^{n-1},x}^{\partial\>{\rm loc}}(-n)=o_{n-1}/2$. 

Let $\kappa_i$ be principal curvatures of $\pO$ at $x$. 
We can take an $n$-ball $B^n_\rho$ with radius $\rho$ with $\rho<1/\max_i|\kappa_i|$ inside $\dom$ that is tangent to $\pO$ at point $x$. 
Put $\dom'=\dom\setminus B^n_\rho$. Then $\pO'=\pO\cup -\partial B^n_\rho$. 
Although $\partial \Omega'$ is not smooth but piecewise smooth, one can still apply Green's theorem, and hence \eqref{div_y} implies that 
\[
\int_{\pO'}{|x-y|}^{-n}\langle y-x,\uon{y}\rangle\,\dy 
=(-n+n)\int_{\dom'}{|x-y|}^{-n}\,\dy =0,
\]
which implies 
\[
\Rsub{\Omega,\pO,x}^{\partial\>{\rm loc}}(-n)=\Rsub{B^n,S^{n-1},x}^{\partial\>{\rm loc}}(-n)=\frac{o_{n-1}}2.
\]

(2), (3) and (4) can be proved by the same stragegy as in Lemma \ref{lem_nu_Res_-m-2}. 
Assume $x=0$, $T_0\pO=\RR^{n-1}$, the coordinate axes are the principal directions of $\pO$ at $0$, and $\pO$ is locally expressed as a graph of a function $f$ defined on a neighbourhood of $0$ of $T_0\pO=\RR^{n-1}$. 
Let $A_f(u)=\sqrt{1+|\nabla f(u)|^2}$ 
be the area density of the graph of $f$. 
The unit normal vectors of $\pO$ at $y=(u,f(u))$ and $x=0$ are given by 
\[
\uon{u}=\frac{(-\nabla f(u),1)}{A_f(u)} 
\quad\mbox{ and }\quad e_n=(0,\dots,0,1)
\]
respectively, which implies 
\begin{eqnarray}
B_{\Omega,\pO,x}^{\partial\>{\rm loc}}(z)&=&\displaystyle \frac1{z+n}\int_{\pO}|y-x|^z\langle y-x, \uon{y}\rangle\,\dy  \notag \\[1mm]
&=&\displaystyle (\mbox{holo.})+\frac1{z+n}\int_{B^{n-1}_\rho}\Big(|u|^2+f(u)^2\Big)^{z/2}\big(f(u)-\langle u, \nabla f(u)\rangle \big)\,du, \notag \\ 
&=&\displaystyle (\mbox{holo.})+\frac1{z+n}\int_0^\rho dr \int_{S^{n-2}} 
r^z{\biggl(1+\frac{f(rw)^2}{r^2}\biggr)}^{z/2}
r^2\bigg(\frac{f(rw)}{r^2}-\frac{\langle w, \nabla f(rw)\rangle}{r}\bigg)\,r^{n-2}\,dv, \qquad\qquad{} \label{rel_B_boundary_loc} \notag \\ 
B_{\Omega,\pO,x}^{{\rm loc}}(z)&=&\displaystyle \frac1{z+n}\int_{\pO}|y-x|^z\langle x-y, \uon{x}\rangle\,\dy \notag \\[1mm]
&=&\displaystyle (\mbox{holo.})+\frac1{z+n}\int_{B^{n-1}_\rho}
\Big(|u|^2+f(u)^2\Big)^{z/2}\bigl(-f(u)\bigr)A_f(u)\,du  \notag \\[1mm]
&=&\displaystyle (\mbox{holo.})-\frac1{z+n}\int_0^\rho dr \int_{S^{n-2}} 
r^z{\biggl(1+\frac{f(rw)^2}{r^2}\biggr)}^{z/2}
r^2\,\frac{f(rw)}{r^2}\,\sqrt{1+{|\nabla f(rw)|}^2}\,r^{n-2}\,dv \qquad{}\label{rel_B_loc} \notag
\end{eqnarray}
for some $\rho>0$, where (holo.) stands for holomorphic part. 
Divide the integrands above by $r^{z+n}$ and expand them in a series in $r$ and let $a_i^\partial(z;w)$ and $a_i(z;w)$ be the coefficients of $r^i$: 
\begin{eqnarray}
\displaystyle {\biggl(1+\frac{f(rw)^2}{r^2}\biggr)}^{z/2}
\bigg(\frac{f(rw)}{r^2}-\frac{\langle w, \nabla f(rw)\rangle}{r}\bigg)
&=&\displaystyle \sum_{i=0}^2a_i^\partial(z;w)\,r^i+O(r^3), \label{a_i_partial_zw} \\[1mm]
\displaystyle -{\biggl(1+\frac{f(rw)^2}{r^2}\biggr)}^{z/2}
\frac{f(rw)}{r^2}\,\sqrt{1+{|\nabla f(rw)|}^2}
&=& \displaystyle \sum_{i=0}^2a_i(z;w)\,r^i+O(r^3). \label{a_i_zw}
\end{eqnarray}
Then by Subsection \ref{method_graph} 
\begin{equation}\label{local_partial_relative_residue_-n-1-j}
\begin{array}{rcl}
\Res_{\Omega,\pO,x}^{\partial\>{\rm loc}}(-n-1-2j)&=&\displaystyle -\frac1{2j+1}
\int_{S^{n-2}} a_{2j}^{\partial}(-n-1-2j;w)\,dv(w), \\[4mm]
\Res_{\Omega,\pO,x}^{{\rm loc}}(-n-1-2j)&=&\displaystyle -\frac1{2j+1}
\int_{S^{n-2}} a_{2j}^{}(-n-1-2j;w)\,dv(w). 
\end{array}
\end{equation}

(2) It is easy to check 
\[
a_0^\partial(-n-1;w)=a_0(-n-1;w)=\sum_i -\frac{\kappa_i}2\,w_i^2.
\]
Now the assertion follows from \eqref{int_S_n-2_w^2}. 

\medskip
(3) First remark that 
the integral on $S^{n-2}$ of a term containing odd powers of $w_i$ vanishes. 
Let $\bar a_2^{\partial}(z;w)$ (or $\bar a_2(z;w)$) be the sum of the terms in $a_2^{\partial}(z;w)$ (or in $a_2(z;w)$ respectively) that consist of terms without odd powers of $w_i$. 

Substitute $z=-n-3$ to \eqref{a_i_partial_zw} and take out the terms with only even powers of $w_i$ from the coefficients of $1,r$, and $r^2$. 
To find $\bar a_2^{\partial}(-n-3;w)$, we do not need to take into account the $r^1$-terms of the second factor of the left hand side of \eqref{a_i_partial_zw}; we have only to consider 
\[
\biggl(1-\frac{n+3}2\Bigl(\sum_i\frac{\kappa_i}2\,w_i^2\Bigr)^2r^2\biggr)
\biggl(
-\sum_j\frac{\kappa_j}2\,w_j^2
 -3\Bigl(
  \sum_i\frac{f_{iiii}}{24}\,w_i^4+\sum_{i<j}\frac{f_{iijj}}{4}\,w_i^2w_j^2
 \Bigr)r^2
\biggr)
+O(r^3) 
\]
to have 
\begin{equation}\label{bar_a_partial_-n-3}
\bar a_2^{\partial}(-n-3;w)=\frac{n+3}2\Bigl(\sum_i\frac{\kappa_i}2\,w_i^2\Bigr)^2\Bigl(\sum_j\frac{\kappa_j}2\,w_j^2\Bigr)
-3\Bigl(
\sum_i\frac{f_{iiii}}{24}\,w_i^4+\sum_{i<j}\frac{f_{iijj}}{4}\,w_i^2w_j^2\Bigr).
\end{equation}

Similarly, substitute $z=-n-3$ to \eqref{a_i_zw} and take out the terms with only even powers of $w_i$ from the coefficients of $1,r$, and $r^2$. 
As $w_iw_j$ $(i\ne j)$ terms of 
\[
\sqrt{1+{|\nabla f(rw)|}^2}=1+2\Bigl(\sum_k\frac{\kappa_k}2\,w_k\Bigr)^2r^2+O(r^3),
\]
are not used in $\bar a_2(-n-3;w)$, we have only to consider 
\[
-\biggl(1-\frac{n+3}2\Bigl(\sum_i\frac{\kappa_i}2\,w_i^2\Bigr)^2r^2\biggr)
\biggl(
 \sum_j\frac{\kappa_j}2\,w_j^2
  +\Bigl(
   \sum_i\frac{f_{iiii}}{24}\,w_i^4+\sum_{i<j}\frac{f_{iijj}}{4}\,w_i^2w_j^2
  \Bigr)r^2
\biggr)
\biggl(1+2\sum_k{\Bigl(\frac{\kappa_k}2\Bigr)}^2\,w_k^2\,r^2\biggr)
+O(r^3)
\]
to have 
\begin{equation}\label{bar_a_-n-3}
\begin{array}{rcl}
\bar a_2(-n-3;w)&=&\displaystyle \frac{n+3}2\Bigl(\sum_i\frac{\kappa_i}2\,w_i^2\Bigr)^2\Bigl(\sum_j\frac{\kappa_j}2\,w_j^2\Bigr)
-2\Bigl(\sum_j\frac{\kappa_j}2\,w_j^2\Bigr)\Bigl(\sum_k \Bigl(\frac{\kappa_k}2\Bigr)^2\,w_k^2\Bigr) \\[4mm]
&&\displaystyle 
-\Bigl(\sum_i\frac{f_{iiii}}{24}\,w_i^4+\sum_{i<j}\frac{f_{iijj}}{4}\,w_i^2w_j^2\Bigr).
\end{array}
\end{equation}

By taking a linear combination of \eqref{bar_a_partial_-n-3} and \eqref{bar_a_-n-3} we can cancel $f_{****}$ terms. Then by 
\eqref{local_partial_relative_residue_-n-1-j} we obtain 
\begin{align}
&\displaystyle \frac12\left(-\Rsub{\Omega,\pO,x}^{\partial\>{\rm loc}}(-n-3)+3\Rsub{\Omega,\pO,x}^{{\rm loc}}(-n-3)\right) \notag \\
&\displaystyle =\frac16\int_{S^{n-2}}\biggl[-(n+3)\Bigl(\sum_i\frac{\kappa_i}2\,w_i^2\Bigr)^3
+6\Bigl(\sum_j\frac{\kappa_j}2\,w_j^2\Bigr)\Bigl(\sum_k{\Bigl(\frac{\kappa_k}2\Bigr)}^2\,w_k^2\Bigr)\biggr]\,dv(w). 
\label{local_relative_residue_-n-3-pf} 
\notag 
\end{align}
Now the assertion follows from \eqref{moment_integral_4_on_S3} and \eqref{moment_integral_6_on_S3}. 

\medskip
(4) 
By \eqref{local_partial_relative_residue_-n-1-j}, \eqref{bar_a_partial_-n-3} and \eqref{bar_a_-n-3}, we have 
\begin{align}
&\displaystyle \Rsub{\Omega,\pO,x}^{\partial\>{\rm loc}}(-n-3)-\Rsub{\Omega,\pO,x}^{{\rm loc}}(-n-3) \notag \\
&\displaystyle =-\frac23\int_{S^{n-2}}\biggl[
\frac18\sum_i\kappa_k^3\,w_k^4+\frac18\sum_{i\ne j}\kappa_i\kappa_j^2\,w_i^2w_j^2
-\sum_i\frac{f_{iiii}}{24}\,w_i^4-\sum_{i<j}\frac{f_{iijj}}{4}\,w_i^2w_j^2
\biggr]\,dv(w). 
\notag
\end{align}
Applying \eqref{moment_integral_4_on_S3} to the right hand side above, 
the assertion follows from \eqref{Delta_H_m=4}. 
\end{proof}

By \eqref{mixed_energy} and \eqref{relative_residue_-n} we have 
\[
\mbox{\rm Vol}(\pO)
=\frac2{o_{n-1}}
\iint_{\pO\times \pO}|x-y|^{-n}\langle y-x, \uon{y}\rangle\,\dx \dy .
\]
Thus, together with \eqref{residue_Omega_-n}, both the volumes of $\dom$ and $\pO$ can be expressed by the double integral on $\pO$.

\subsection{\M non-invariance of relative residues}
When $z$ is not a pole, $B_{\O,\pO}(z)$ is a homogeneous function of $\dom$ with degree $z+2n-1$. 
By the same reason as in the cases of $M$ and $\dom$, when $z$ is a pole the relative residue $\Res_{\O,\pO}$ inherits the homogeneity property, whereas the regularized energy $E_{\O,\pO}(z)$ does not unless the residue vanishes identically.

To see if the relative residues are \M invariant, we have only to study the case when $n$ is even and $z=-2n+1$. 
Notice that when $n=2$, 
\[\Rd{-3}{\O,\pO}=-\int_{\pO}\ka \dx =-2\pi\chi(\dom)\]
is a topological invariant\footnote{The sign of the curvature in the above formula is opposite to that in \eqref{relative_residue_-n-1}. This is because if the normal vector of a plane curve is on the left side of the tangent vector by convention and the boundary of a convex body is oriented counterclockwise, the normal vector becomes inward, which is opposite to our convention (cf. Remark \ref{remark_convention_orientation_normal}).}. 
We show that $\Rd{-2n+1}{\O,\pO}$ is not \M invariant when $n=4$. 

Lemma \ref{inversion_codim>1}, restricted to the codimension $1$ case, implies the following. 
Let $M$ be a hypersurface that does not pass through the origin, $\nu$ be the unit normal vector and $\kappa_i$ be the principal curvatures. 
Let $I$ be an inversion in a unit sphere with center the origin. 
Then the principal curvatures of the image $I(M)$ at a point $I(p)$ are given by 
\begin{equation}\label{lem323}
\tilde \kappa_i=\mp\bigl({|p|}^2\kappa_i+2\langle p,\nu\rangle\bigr),
\end{equation}
where the sign depends on the choice of the unit normal vector $\widetilde {\nu}$ of $I(M)$. 

\medskip
Let $S_a$ be an $a$-hyper-spheroid given by Definition \ref{def_hyper-spheroid}. 
Since 
\[
{|p(a;\theta)|}^2=a^2\cos^2\theta_1+\sin^2\theta_1, 
\quad 
\langle p,\nu\rangle=\frac1{|p(1/a;\theta)|}=\frac{a}{\sqrt{a^2\sin^2\theta_1+\cos^2\theta_1}},
\]
\eqref{principal_curvatures_S_a} and \eqref{lem323} imply that the principal curvatures of the image $\widetilde S_a=I(S_a)$ of an inversion $I$ in a unit sphere with center the origin are given by 
\begin{equation}\label{principal_curvatures_widetilde_S_a}
\tilde \kappa_1=-\frac{(2a^2-1)\sin^2\theta_1+(2-a^2)\cos^2\theta_1}{{\left(a^2\sin^2\theta_1+\cos^2\theta_1\right)}^{3/2}}\,a, \>\>
\tilde \ka_2=\dots=\tilde \ka_m=-\frac{-(a^2-1)\cos^2\theta_1+1}{\sqrt{a^2\sin^2\theta_1+\cos^2\theta_1}}\,a.
\end{equation}
By \eqref{vol_ele_spheroid}, the volume element $d\tilde v$ of $\widetilde S_a$ is given by 
\begin{eqnarray}
d\tilde v
&=&\displaystyle {|p|}^{-2m}dv 
%
=\displaystyle \frac{\sqrt{a^2\sin^2\theta_1+\cos^2\theta_1}}{{\left(a^2\cos^2\theta_1+\sin^2\theta_1\right)}^{m}}\,
\sin^{m-1}\theta_1\sin^{m-2}\theta_2\dots\sin\theta_{m-1} \,d\theta_1\dots d\theta_m.
 \label{vol_element_widetilde_S_a}
\end{eqnarray}

\begin{proposition}
When $n=4$, $\Rd{-7}{\O,\pO}$ is not \M invariant. 
\end{proposition}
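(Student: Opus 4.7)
The plan is to test \M invariance on an explicit one-parameter family. By Proposition \ref{prop_relative_residues}(3) with $n=4$,
\[
\Res_{\Omega,\pO}(-7)=\frac{o_2}{720}\int_{\pO}\biggl(4\sum_{i=1}^3\kappa_i^{\,3}-\Big(\sum_{j=1}^3\kappa_j\Big)^3\biggr)\dx,
\]
so the question reduces to comparing two integrals of a cubic symmetric function of the principal curvatures before and after inversion.

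I would take $\Omega$ to be the compact body bounded by the $3$-dimensional hyper-spheroid $S_a\subset\RR^4$ of Definition \ref{def_hyper-spheroid} and the image body $\widetilde\Omega$ to be bounded by $\widetilde S_a=I(S_a)$, where $I$ is inversion in the unit sphere at the origin; if needed, one translates $\Omega$ first so that $T(\Omega)$ remains compact, which does not affect $\Res_{\Omega,\pO}(-7)$ since its integrand depends only on the principal curvatures. The principal curvatures and volume elements of $S_a$ and $\widetilde S_a$ are given in \eqref{principal_curvatures_S_a}--\eqref{vol_element_widetilde_S_a}. Because both surfaces are rotationally symmetric in $(\theta_2,\theta_3)$, every integrand depends only on $\theta_1$, and the $\theta_2,\theta_3$ integrations contribute a global factor $4\pi$. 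Writing $A=\kappa_1$ and $B=\kappa_2=\kappa_3$ (and similarly $\widetilde A,\widetilde B$), the algebraic identity
\[
4(A^3+2B^3)-(A+2B)^3=3A\bigl(A^2-2AB-4B^2\bigr)
\]
collapses both integrands to rational functions of $s=\sin^2\theta_1$ and $c=\cos^2\theta_1$, with denominators built from $D=a^2s+c$ on one side and from both $D$ and $\widetilde D=a^2c+s$ on the other.

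Thus both sides reduce to an explicit one-variable integral in $\theta_1\in[0,\pi]$ depending on the parameter $a$. The final step is to prove that these two integrals differ as functions of $a$. Rather than attempt closed-form evaluation, I would expand both in Taylor series around $a=1$: at $a=1$ the spheroid is the round $3$-sphere and both sides coincide trivially, so \M invariance would force agreement to all orders in $a-1$, and a single mismatched Taylor coefficient settles the proposition. As a backup, a direct numerical check at a convenient value such as $a=2$ suffices. The only obstacle is the mechanical bookkeeping of the cubic-in-curvature expansion against the volume factor $\sqrt{D}/\widetilde D^{3}$ from $d\widetilde v$; there is no conceptual difficulty beyond this computation.
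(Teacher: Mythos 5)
Your overall strategy -- test \M invariance on the family of hyper-spheroids $S_a$, use \eqref{principal_curvatures_S_a}--\eqref{vol_element_widetilde_S_a} to reduce both sides to one-variable integrals in $\theta_1$, and detect a mismatch in $a$ -- is exactly the paper's, and your reduction of the cubic symmetric function via $4(A^3+2B^3)-(A+2B)^3=3A(A^2-2AB-4B^2)$ is correct. But there is a genuine gap in the setup of the domain. The solid body bounded by $S_a$ contains the origin, so its image under inversion in the unit sphere centered at the origin is \emph{not} compact, and the proposition's hypothesis ``as far as $T(\dom)$ remains compact'' fails; the comparison you propose is therefore not a test of the statement. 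Your suggested repair -- translate $\Omega$ first -- does leave $\Res_{\Omega,\pO}(-7)$ unchanged, but it changes $T(\Omega)$ and hence $\Res_{T(\Omega),\partial T(\Omega)}(-7)$: the formulas \eqref{principal_curvatures_widetilde_S_a} and \eqref{vol_element_widetilde_S_a} that you intend to use for $\widetilde S_a$ are derived from \eqref{lem323} with $p$ the position vector relative to the inversion center, i.e.\ for the spheroid centered at the center of inversion. After translation they no longer apply, and you would have to redo the curvature computation for $I(S_a+v)$ from scratch. So you cannot simultaneously have a compact image and the closed-form curvature data you cite.

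The paper resolves this by taking $\Omega$ to be the \emph{annular} region bounded by $S_a$ and the small sphere $S^3_{1/2}$: this body omits the origin, its inverse is the compact region between $S^3_2$ and $\widetilde S_a$, and the relative residue splits over the two boundary components. Writing $R(M)$ for the curvature integral, the inner and outer spheres contribute $R(S^3_{1/2})$, $R(S^3_2)$, which are equal to $R(S_1)$ by scale invariance of $R$; the whole question then collapses to whether $R(S_a)+R(\widetilde S_a)=2R(S_1)$, which is exactly the one-variable comparison you were aiming for, now on a legitimate compact pair. With that modification your plan goes through; as a final remark, a ``numerical check at $a=2$'' is not by itself a proof unless made rigorous, whereas your Taylor-coefficient argument at $a=1$ (or the paper's closed-form evaluation, which gives $R_a+\widetilde R_a<2R_1$ for $a>1$) is.
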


\begin{proof}
Let $S^3_r$ denote the $3$-sphere with radius $r$ and center the origin. 
Let $\dom$ be a region bounded by an $a$-hyper-spheroid ($a\ge1$) and $S^3_{1/2}$. 
Then the image $\widetilde \dom$ of an inversion $I$ in the unit sphere $S^3_1$ is a region bounded by $S^3_2$ and $\widetilde S_a$. 

For a hypersurface $M$ that bounds a compact region in $\RR^4$ put 
\[
R(M)=\int_{M}\Big(\sum_i{\ka_i}^3-\sum_{i\ne j}{\ka_i}^2\ka_j-2\ka_1\ka_2\ka_3
\Big)dv,
\]
where the sign of the principal curvatures are taken with respect to the outer unit normal. Then \eqref{relative_residue_-n-3} implies that 
\begin{equation}\label{relative_residue_-7}
\begin{array}{rcl}
\Rd{-7}{\O,\pO}&=&\displaystyle \frac\pi{60}\Bigl(R(S_a)-R\big(S^3_{1/2}\big)\Bigr), \quad 
\Rd{-7}{\widetilde\O,\partial\widetilde\O}=\frac\pi{60}\Bigl(R\big(S^3_{2}\big)-R\big(\widetilde S_a\big)\Bigr)
\end{array}
\notag
\end{equation}
As $R$ is scale invariant  
we have only to show $R(S_a)+R\big(\widetilde S_a\big)\ne2R(S_1^3). $

Since the principal curvatures of $S_a$ and $\widetilde S_a$ depend only on $\theta_1$, putting 
\begin{equation}\label{two_local_relative_residues_-7}
\begin{array}{rcl}
R_a&=&\displaystyle \int_0^\pi\Big(\sum_i{\ka_i}^3-\sum_{i\ne j}{\ka_i}^2\ka_j-2\ka_1\ka_2\ka_3
\Big)\sqrt{a^2\sin^2\theta_1+\cos^2\theta_1}\,\sin^{2}\theta_1\,d\theta_1, \\
\widetilde R_a&=&\displaystyle \int_0^\pi\Big(\sum_i{\tilde\ka_i}^3-\sum_{i\ne j}{\tilde\ka_i}^2\tilde\ka_j-2\tilde\ka_1\tilde\ka_2\tilde\ka_3
\Big)\frac{\sqrt{a^2\sin^2\theta_1+\cos^2\theta_1}}{{\left(a^2\cos^2\theta_1+\sin^2\theta_1\right)}^{3}}\,\sin^{2}\theta_1\,d\theta_1, \\
\end{array}
\notag
\end{equation}
where $\ka_i$ and $\tilde\ka_i$ are given by \eqref{principal_curvatures_S_a} and \eqref{principal_curvatures_widetilde_S_a}, it is enough to show $R_a+\widetilde R_a\ne 2R_1$. 

Direct computation\footnote{The author used Maple in these calculations. } 
shows
\[
\begin{array}{rcl}
R_a&=&\displaystyle \int_0^\pi \frac{a^3\sin^2\theta_1\bigl(4(a^2-1)^2\sin^4\theta_1+10(a^2-1)\sin^2\theta_1+5\bigr)}{{\big(a^2\sin^2\theta_1+\cos^2\theta_1\big)}^4}\,d\theta_1 \\[4mm]
&=&\displaystyle \bigg[\frac{5(7a^4+2a^2-1)}{16a^4}\arctan(a\tan(\theta_1)) \\[4mm]
&&\displaystyle +\frac{(-87a^8-66a^6+33a^4)\tan^5(\theta_1)-(200a^6+80a^4-40a^2)\tan^3(\theta_1)-(105a^4+30a^2-15)\tan(\theta_1)}{48a^3{\big(a^2\tan^2(\theta_1)+1\big)}^3}
\biggr]_0^\pi  \\[4mm]
&=& \displaystyle \frac{5(7a^4+2a^2-1)}{16a^4}\,\pi
\end{array}
\]
\[
\begin{array}{rcl}
\widetilde R_a&=&\displaystyle \int_0^\pi \frac{a^3\sin^2\theta_1\bigl(3(a^2-1)\sin^2\theta_1-a^2+2\bigr)}{{\big(a^2\cos^2\theta_1+\sin^2\theta_1\big)}^3{\big(a^2\sin^2\theta_1+\cos^2\theta_1\big)}^4}
\Bigl(4{(a^2-1)}^4\sin^8\theta_1-2{(a^2-1)}^3(4a^2-15)\sin^6\theta_1
\\[6mm]
&& \displaystyle \qquad  +{(a^2-1)}^2(4a^4-40a^2+65)\sin^4\theta_1
+10(a^2-1)(a^2-2)(a^2-3)\sin^2\theta_1
+5{(a^2-2)}^2
\Bigr)\,d\theta_1 \\[3mm]
&=&\displaystyle \frac{13a^{10}+153a^8+138a^6+18a^4-7a^2+5}{16a^4{(a^2+1)}^3}\,\pi
\end{array}
\]
The antiderivative of the integrand of $\widetilde R_a$ can also be expressed explicitly using $\tan$ and $\arctan$, but we omit it as it is a little bit lengthy. 
Now we have $R_a+\widetilde R_a<2R_1$ if $a>1$. 
\end{proof}

\subsection{Inclusion-exclusion principle}\label{subsect_inclusion_exclusion}

The residues and the relative residues can be obtained by the integral on $\partial\dom$ of functions that are given locally except for $\Res_\dom(-n)={\mbox{\rm Vol}}(\dom)$. 
This fact implies that the residues and relative residues satisfy the {\em inclusion-exclusion principle}. 
Recall we say that a compact body $\dom$ is of class $C^r$ if the boundary $\partial\dom$ is a closed submanifold of $\RR^n$ of class $C^r$. 

\begin{proposition}\label{inclusion-exclusion_axiom}
Let $k\ge1$. 
Let $\dom_1$ and $\dom_2$ be compact bodies in $\RR^n$ such that $\dom_1, \dom_2$, $\dom_1\cap\dom_2$ and $\dom_1\cup\dom_2$ are all of class $C^{k+2}$. 
Suppose $\Res(\dom_i)$ $(i=1,2)$ is one of the residues $\Res_{\dom_i}(z)$, the relative residues $\Res_{\dom_i, \partial\dom_i}(z)$ $(z=-n$ or $-n-1-2j)$, or the residues of the boundary $\Res_{\pO_i}(z)$ $(z=-n-1-2j)$, where $0\le 2j\le k-1$. 
Then 
\begin{equation}\label{i-e_axiom_compact_bodies}
\Res(\dom_1)+\Res(\dom_2)=\Res(\dom_1\cap\dom_2)+\Res(\dom_1\cup\dom_2).
\end{equation}
\end{proposition}

\begin{proof} 
Recall that when $z=-n$ 
we have  
\[
\Res_\dom(-n)=o_{n-1}\textrm{Vol}(\dom), \>\Res_{\dom,\partial\dom}(-n)=\frac{o_{n-1}}2\textrm{Vol}(\partial\dom),
\]
where $o_p$ is the volume of the unit $p$-dimensional unit sphere. 
Since it is known that the right hand sides above satisfy the inclusion-exclusion principle (see Subsection \ref{subsubsection_Lipschitz-Killing_curv_intrinsic_volum}), 
we have only to prove the case when $z=-n-1-2j$, $j\ge0$. 

\smallskip
Let $\dom$ be a compact body and $M=\pO$. 
Let $\nu_x$ be a unit outer normal vector to $\dom$ at $x\in M$. 
We give a proof for the relative residues. 
Put 
\[
\lambda_{{\rm rel}}^\partial(x,y)=\langle y-x, \nu_y\rangle, \> 
\psi_{M,\lambda_{{\rm rel}}^\partial,x}(t)=\int_{M\cap B^n_t(x)}\lambda_{{\rm rel}}^\partial(x,y)\,dv(y). 
\]
Since the meromorphic potential functions at a boundary point $x$ are given by  
\[\begin{array}{rcl}
%
\displaystyle B_{\Omega,\pO,x}^{\partial\>{\rm loc}}(z)&=&\displaystyle \frac1{z+n}\int_{M}|x-y|^z\lambda_{{\rm rel}}^\partial(x,y)\,\dy, 
\end{array}
\]
by Subsection \ref{method_graph}, \eqref{RsubMrhoddtpsix} implies that the local residues are given by 
\[\begin{array}{rcl}
\displaystyle \Res_{\Omega,\pO,x}^{\partial\>{\rm loc}}(-n-(2j+1))&=&\displaystyle \frac{-1}{(2j+1)}\cdot
\left.\frac1{(n+2j)!}\,\frac{d^{\,n+2j+1}}{dt^{\,n+2j+1}} \,\psi_{M,\lambda_{{\rm rel}}^\partial,x}(t)\right|_{t=0} \\[4mm]
&=&\displaystyle -\frac{\psi_{M,\lambda_{{\rm rel}}^\partial,x}^{(n+2j+1)}(0)}{(2j+1)(n+2j)!}\,,
\end{array}
\]
which implies the residues are given by the integral of locally given functions as 
\begin{equation}\label{formula_residues}
\begin{array}{rcl}
R_{\dom,\pO}(-n-(2j+1))&=&\displaystyle -\frac{1}{(2j+1)(n+2j)!}\int_{M}
\psi_{M,\lambda_{{\rm rel}}^\partial,x}^{(n+2j+1)}(0)\,dv(x). 
\end{array}
\end{equation}
\indent
Now let us come back to the situation of the proposition. 
First remark that the regularity of $\dom$ guarantees the well-definedness of the residues. 
This follows from Proposition 2.3 and Corollary 2.4 of \cite{Onew} since the argumet works even if we replace $\lambda_\nu$ by $\lambda_{\rm rel}^\partial$ there. \\
\indent
Put 
\[
M_k=\partial\dom_k \,(k=1,2), \, M_0=M_1\cap M_2, \, M_\cap=\partial(\dom_1\cap\dom_2), \, M_\cup=\partial(\dom_1\cup\dom_2). 
\] 
Then we have $M_0\subset M_\cap, M_\cup$. 
Put 
\[
M_{1,i}=(M_1\cap\dom_2)\setminus M_0, \> M_{1,o}=M_1\setminus (M_1\cap \dom_2), \> 
M_{2,i}=(M_2\cap\dom_1)\setminus M_0, \> M_{2,o}=M_2\setminus (M_2\cap \dom_1).
\]
\begin{figure}[htbp]
\begin{center}
\if0 
\psfrag{n}{$\dom_1$}
\psfrag{z}{$M_0$}
\psfrag{t}{$\dom_2$}
\psfrag{o}{$M_{1,o}$}
\psfrag{i}{$M_{1,i}$}
\psfrag{p}{$M_{2,o}$}
\psfrag{j}{$M_{2,i}$}
\includegraphics[width=.7\linewidth]{iea.eps}
\fi 
\includegraphics[width=.6\linewidth]{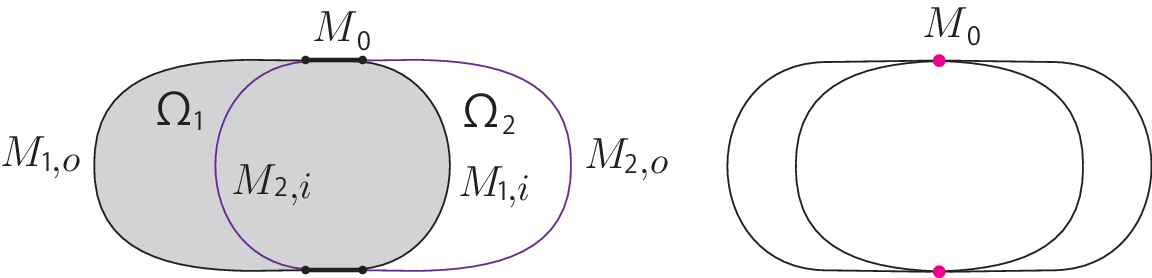}
\caption{}
\label{}
\end{center}
\end{figure}
Then 
\begin{equation}\label{M_decomposition}
M_k=M_0\sqcup M_{k, i} \sqcup M_{k,o} \>\> (k=1,2), \> 
M_\cap=M_0\sqcup M_{1, i}\sqcup M_{2, i}, \>
M_\cup=M_0\sqcup M_{1, o}\sqcup M_{2, o}, 
\notag
\end{equation}
which implies that for sufficiently small $t>0$ 
we have 
\begin{equation}\label{psi_M_iea}
\psi_{M_1,\lambda,x}(t)+\psi_{M_2,\lambda,x}(t)
=\psi_{M_\cap,\lambda,x}(t)+\psi_{M_\cup,\lambda,x}(t)
\end{equation}
for any $x\in M_1\cup M_2$. 
Now \eqref{formula_residues} and \eqref{psi_M_iea} imply that the relative residues satisfy the inclusion-exclusion principle. 

If we substitute $\lambda_{{\rm rel}}^\partial$ in the above argument by $\lambda_1(x,y)\equiv1$ or $\lambda_\nu(x,y)=\langle \nu_x, \nu_y\rangle$ we obtain the proof for $\Res_{\dom_i}$ and $\Res_{\pO_i}$. 
\end{proof}

\section{Residues and similar quantities}\label{sect_similar_q}
We obtain a sequence of geometric quantities of manifolds as residues. 
Let us compare the residues with known quantities which appear to be similar to the residues. 
Most of the quantities studied here are derived from asymptotic expansions.\footnote{It is an open problem whether one can find a weight $\la=\la(x,y)$ such that $\la$-weighted residues can produce the quantities studied in this section. } 
To begin with, we generalize residues to Riemannian manifolds.

\subsection{Energies and residues of Riemannian manifolds}
%
%
Let $M=(M,g)$ be an $m$-dimensional closed connected Riemannian manifold and $d(x,y)$ be 
the geodesic distance between $x$ and $y$. 
Let $\psi_{x}(t)$ be the volume of the geodesic ball $B_t(x)$ with center $x$ and radius $t$. 
Gray (\cite{G}) gave a series expansion of $\psi_{x}(t)$ in $t$ in terms of curvature tensors. 
\begin{equation}\label{Gray_volume_expansion_intrinsic}
\begin{array}{l}
\displaystyle \psi_{x}^{\,\prime}(t) 
=\displaystyle o_{m-1}t^{m-1}\left[1-\frac{\Sc}{6m}t^2 
%
+\frac1{360m(m+2)}
\left(-3{|\Rm|}^2+8{|\mbox{\rm Ric}|}^2+5{\Sc}^2-18\Delta \Sc
\right)t^4+O(t^6)
\right]. \\
\end{array} \qquad{}
\end{equation}
The next coefficient was given in \cite{GV} Theorem 3.3. 

By the argument in Subsection \ref{subsubsect_extrinsic_balls}, 
the first three residues are given by 
\begin{eqnarray}
\Res_M(-m)&=&\displaystyle {o_{m-1}}\, \mbox{Vol}(M), \label{intrinsic_residue_-m} \\
\Res_M(-m-2)&=&\displaystyle -\frac{o_{m-1}}{6m}\int_M \Sc \,dv, \label{intrinsic_residue_-m-2} \\
\Res_M(-m-4)&=&\displaystyle \frac{o_{m-1}}{360m(m+2)}\int_M \left(-3{|\Rm|}^2+8{|\mbox{\rm Ric}|}^2+5{\Sc}^2
\right) dv. \label{intrinsic_residue_-m-4}
\end{eqnarray}
We remark that connectedness assumption is needed to define energy, but not for residues. 

Let us see if the residues are conformally invariant. 
We have only to study $\Res_M(-2m)$ when $m$ is even since otherwise the residues are not even scale invariant.  
Notice that when $m=2$  
\[
\Res_M(-4)=-\frac{2\pi}{12}\int_M \Sc \,dv=-\frac{2\pi^2}3\chi(M)
\]
is a topological invariant. 

\begin{proposition}\label{prop_conf_inv_RMminus8}
When $m=4$, $\Res_M(-8)$ is not a conformal invariant. 
\end{proposition}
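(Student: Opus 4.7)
My plan is to give an explicit counterexample on the flat torus $T^4$. Since all of $|\Rm|^2$, $|\mbox{\rm Ric}|^2$, $\Sc$ vanish on the flat metric $g_0$, \eqref{intrinsic_residue_-m-4} gives $\Res_{T^4,g_0}(-8)=0$, so it suffices to exhibit some $\psi\in C^\infty(T^4)$ with $\Res_{T^4,e^{2\psi}g_0}(-8)\neq 0$.

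The computation is organised by the following algebraic observation: the coefficient vector $(-3,8,5)$ of the integrand $-3|\Rm|^2+8|\mbox{\rm Ric}|^2+5\Sc^2$ in \eqref{intrinsic_residue_-m-4} does not lie in the $\RR$-span of $(1,-2,\tfrac13)$ and $(1,-4,1)$: matching the first two coordinates forces $(a,b)=(-2,-1)$, which then gives $-\tfrac53\neq 5$ in the third coordinate. These two vectors are precisely those of the Weyl-squared density $|W|^2=|\Rm|^2-2|\mbox{\rm Ric}|^2+\tfrac13\Sc^2$, whose integral is conformally invariant in dimension $4$, and of the Gauss-Bonnet-Chern density $|\Rm|^2-4|\mbox{\rm Ric}|^2+\Sc^2$, whose integral equals $32\pi^2\chi(M)$.

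Consider the linear map $\Phi\colon\RR^3\to\{\text{functionals on the conformal class }[g_0]\}$ defined by
\[
\Phi(c_1,c_2,c_3)(g):=\int_{T^4}\bigl(c_1|\Rm_g|^2+c_2|\mbox{\rm Ric}_g|^2+c_3\Sc_g^2\bigr)\,dv_g,\qquad g\in[g_0].
\]
Its kernel $K=\ker\Phi$ contains both $(1,-2,\tfrac13)$ (because $(T^4,g_0)$ is conformally flat, so $|W|^2(g_t)\equiv 0$ for every $g_t=e^{2t\psi}g_0$) and $(1,-4,1)$ (because $\chi(T^4)=0$). Conversely, taking $g_t=e^{2t\psi}g_0$ with a non-constant $\psi$, say $\psi(x)=\cos(2\pi x_1/L_1)$ on $T^4=\prod_i(\RR/L_i\RR)$, the conformal transformation law $\widetilde\Sc=-6e^{-2t\psi}(t\Delta\psi+t^2|\nabla\psi|^2)$ together with $\widetilde{dv}=e^{4t\psi}dv$ yields
\[
\Phi(0,0,1)(g_t)=36t^2\int_{T^4}(\Delta\psi+t|\nabla\psi|^2)^2\,dv,
\]
which is strictly positive for small $t>0$; hence $\Phi(0,0,1)\neq 0$ and $(0,0,1)\notin K$. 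Since $(0,0,1)$, $(1,-2,\tfrac13)$, $(1,-4,1)$ are linearly independent in $\RR^3$, we conclude $\dim K=2$ and $K=\mathrm{span}\{(1,-2,\tfrac13),(1,-4,1)\}$. Because $(-3,8,5)\notin K$, the functional $\Phi(-3,8,5)$ is non-zero, producing a metric $g_t\in[g_0]$ with $\Res_{T^4,g_t}(-8)\neq 0=\Res_{T^4,g_0}(-8)$.

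The main obstacle is the identification $\dim K=2$, which I reduce to one concrete computation: the non-vanishing of the quadratic-in-$t$ part of $\int\Sc_{g_t}^2\,dv_{g_t}$ along a non-constant conformal deformation of $g_0$. Once this is in hand, the linear-algebra mismatch between $(-3,8,5)$ and the span of the two ``invariant'' vectors $(1,-2,\tfrac13)$ and $(1,-4,1)$ closes the argument; no further analysis of the derivative terms in the conformal transformation of $|\Rm|^2$ or $|\mbox{\rm Ric}|^2$ is needed.
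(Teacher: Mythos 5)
Your proof is correct, and its backbone is the same as the paper's: decompose the integrand of \eqref{intrinsic_residue_-m-4} with respect to the pointwise conformal invariant ${|W|}^2\,dv$, the Gauss--Bonnet--Chern density ${|\Rm|}^2-4{|\mbox{\rm Ric}|}^2+{\Sc}^2$, and ${\Sc}^2$, and observe that the ${\Sc}^2$-component (coefficient $\tfrac{20}{3}$, equivalently your computation that $(-3,8,5)=-2(1,-2,\tfrac13)-(1,-4,1)+\tfrac{20}{3}(0,0,1)$) is nonzero. Where you genuinely add something is the final step: the paper simply asserts that $\int_M{\Sc}^2\,dv$ is not an integral conformal invariant, pointing for justification to Theorem \ref{thm_PCE}, which only treats hypersurfaces of $\RR^5$ and so does not literally cover the intrinsic Riemannian setting of this proposition. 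You instead verify the non-invariance concretely and self-containedly on the flat torus, where ${|W|}^2\equiv0$ throughout the conformal class and $\chi(T^4)=0$ annihilates the Gauss--Bonnet term, so that only the conformal transformation law of $\Sc$ has to be computed; your formula $\Phi(0,0,1)(g_t)=36t^2\int_{T^4}(\Delta\psi+t|\nabla\psi|^2)^2\,dv>0$ for small $t>0$ and non-constant $\psi$ is correct, and it yields an explicit pair of conformally equivalent metrics with $\Res_{T^4,g_0}(-8)=0\ne\Res_{T^4,g_t}(-8)$. The only cosmetic remark is that the kernel argument is slightly more elaborate than needed: since the $(1,-2,\tfrac13)$ and $(1,-4,1)$ components contribute nothing on $[g_0]$, the displayed decomposition of $(-3,8,5)$ already gives $\Res_{T^4,g_t}(-8)$ as a positive multiple of $t^2\int(\Delta\psi+t|\nabla\psi|^2)^2\,dv$ directly, without discussing $\dim K$.
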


\begin{proof}
Let $C_{ijkl}$ be the Weyl tensor. Put 
\begin{equation}\label{Weyl_tensor}
{|W|}^2=\sum_{i,j,k,l}C_{ijkl}^{\,2}={|\Rm|}^2-2{|\mbox{\rm Ric}|}^2+\frac13{\Sc}^2, 
\end{equation}
then ${|W|}^2dv$ is a pointwise conformal invariant. 
Put 
\begin{eqnarray}
X&=&\displaystyle \frac14\left({|\Rm|}^2-4{|\mbox{\rm Ric}|}^2+{\Sc}^2 \right), \label{def_X} 
\end{eqnarray}
then by the Gauss-Bonnet-Chern theorem there holds 
\[
\int_MXdv=8\pi^2\chi(M), 
\]
which implies that $X$ is an integral topological invariant. 

Since the integrand of \eqref{intrinsic_residue_-m-4} satisfies 
\[
-3{|\Rm|}^2+8{|\mbox{\rm Ric}|}^2+5{\Sc}^2
=-2{|W|}^2-4X+\frac{20}3{\Sc}^2,
\]
and ${\Sc}^2$ is not an integral conformal invariant (this fact can also be shown by Theorem \ref{thm_PCE} when $M$ is a hypersurface in $\RR^5$), 
$\Res_M(-8)$ is not a conformal invariant. 
\end{proof}

\subsection{Lipschitz-Killing curvatures (intrinsic volumes)}\label{subsubsection_Lipschitz-Killing_curv_intrinsic_volum}
%
Recall that the Lipschitz-Killing curvatures are given by \eqref{C_k_Omega}, and when $\dom$ is convex they are equal to the intrinsic volumes given by \eqref{Steiner_formula}. 

\begin{proposition}
Let $\dom$ be a compact body in $\RR^n$. 
The first four Lipschitz-Killing curvatures 
can be expressed in terms of the residues, relative residues and the residues of the boundary as follows. 
\begin{eqnarray}
\displaystyle C_n(\dom)&=&
\displaystyle \frac1{o_{n-1}}\Res_\dom(-n),  \label{LKvsR1}\\
\displaystyle C_{n-1}(\dom)&=&
\displaystyle -\frac{n-1}{2o_{n-2}}\Res_\dom(-n-1)=\frac1{2o_{n-2}}\Res_{\pO}(-n+1)
=\frac1{o_{n-1}}\Rd{-n}{\O,\pO}, \qquad {} \label{LKvsR2} \\
%
\displaystyle C_{n-2}(\dom)&=&
\displaystyle -\frac{n-1}{\pi o_{n-2}}\Rd{-n-1}{\O,\pO}, \label{LKvsR3} \\
\displaystyle C_{n-3}(\dom)&=&
\displaystyle \frac{3(n-1)}{4\pi o_{n-2}}\big((n+1)\Rd{-n-3}{\O}-\Rd{-n-1}{\pO}\big). \label{LKvsR4}
\end{eqnarray}
\end{proposition}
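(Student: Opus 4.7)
The plan is to reduce each identity to a bookkeeping check: write the right-hand side of each assertion using the residue formulas collected in Proposition~\ref{prop_residues_closed_12}, Proposition~\ref{residues_Omega_123}, and Proposition~\ref{prop_relative_residues}, and verify that it equals the explicit expression $\frac{(-1)^{n-1-k}}{(n-k)\omega_{n-k}}\int_{\pO} S_{n-1-k}\,\dx$ for $C_k(\dom)$ given by \eqref{C_k_Omega}.

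First I would dispatch \eqref{LKvsR1} immediately from \eqref{residue_Omega_-n}, which states $\Res_\dom(-n)=o_{n-1}\mbox{Vol}(\dom)=o_{n-1}C_n(\dom)$. For \eqref{LKvsR2}, since $S_0\equiv 1$ and $\omega_1=2$, the target is $C_{n-1}(\dom)=\frac12\mbox{Vol}(\pO)$; I would verify that each of the three claimed expressions reduces to this via \eqref{residue_Omega_-n-1}, via Proposition~\ref{prop_residues_closed_12}(1) applied to the $(n-1)$-dimensional closed submanifold $\pO$ of $\RR^n$ (which gives $\Res_{\pO}(-n+1)=o_{n-2}\mbox{Vol}(\pO)$), and via \eqref{relative_residue_-n}. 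For \eqref{LKvsR3}, since $S_1=H$ and $\omega_2=\pi$, the target is $-\frac1{2\pi}\int_{\pO} H\,\dx$, and this is exactly \eqref{relative_residue_-n-1} after dividing by the appropriate constant.

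The only substantive identity is \eqref{LKvsR4}, whose target is $C_{n-3}(\dom)=\frac{1}{4\pi}\int_{\pO} S_2\,\dx$ (using $3\omega_3=4\pi$). Two distinct residue formulas are available: \eqref{residueOmega-n-3_BH} expresses $\Res_\dom(-n-3)$ as a multiple of $\int_{\pO}(2\Vert h\Vert^2+H^2)\,\dx$, while \eqref{residue_closed_-m-2} applied to $\pO$ with $m=n-1$ expresses $\Res_{\pO}(-n-1)$ as a multiple of $\int_{\pO}(2\Vert h\Vert^2-H^2)\,\dx$. The combination $(n+1)\Rd{-n-3}{\dom}-\Rd{-n-1}{\pO}$ is precisely the one that cancels the $\Vert h\Vert^2$ contribution, leaving a multiple of $\int_{\pO}(H^2-\Vert h\Vert^2)\,\dx=2\int_{\pO} S_2\,\dx$. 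A direct computation of the constants then produces the overall factor $\frac{3(n-1)}{4\pi o_{n-2}}$.

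I do not anticipate any conceptual obstacle; the whole proposition amounts to a careful normalization check against \eqref{C_k_Omega}. The only delicate point is keeping the sign patterns and the combinatorial factors $(n-k)\omega_{n-k}$ straight, and in \eqref{LKvsR4} choosing the correct weights to eliminate $\Vert h\Vert^2$ between the two available sources — which works because the coefficients of $\Vert h\Vert^2$ and $H^2$ in those two sources are linearly independent, so a single linear combination isolates the target integrand $S_2$.
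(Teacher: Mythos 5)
Your proposal is correct and follows exactly the paper's own proof: both reduce the statement to reading off $C_n,\ C_{n-1},\ C_{n-2},\ C_{n-3}$ from \eqref{C_k_Omega} as $\mbox{Vol}(\dom)$, $\frac12\mbox{Vol}(\pO)$, $-\frac1{2\pi}\int_{\pO}H\,\dx$, $\frac1{4\pi}\int_{\pO}S_2\,\dx$ and matching constants against \eqref{residue_Omega_-n}, \eqref{residue_Omega_-n-1}, Proposition~\ref{prop_residues_closed_12}(1), \eqref{relative_residue_-n}, \eqref{relative_residue_-n-1}, \eqref{residue_closed_-m-2} and \eqref{residueOmega-n-3_BH}. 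Your observation that the weights $(n+1,-1)$ in \eqref{LKvsR4} are exactly those cancelling $\Vert h\Vert^2$ and leaving $H^2-\Vert h\Vert^2=2S_2$ is the correct (and verified) normalization check.
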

\begin{proof}
Recall that \eqref{C_k_Omega} implies that $C_n(\dom), C_{n-1}(\dom), C_{n-2}(\dom)$, and $C_{n-3}(\dom)$ are given by 
\[
{\rm Vol}(\dom), \quad
\frac12{\rm Vol}(\pO), \quad
-\frac1{2\pi}\int_{\pO}H\,dv, \quad 
\frac1{4\pi}\int_{\pO}\sum_{i<j}\ka_i\ka_j\,dv
\]
respectively. 
\eqref{LKvsR1} follows from \eqref{residue_Omega_-n}, 
\eqref{LKvsR2} from \eqref{residue_Omega_-n-1}, Propositions \ref{prop_residues_closed_12} (1), and \eqref{relative_residue_-n}, 
\eqref{LKvsR3} from \eqref{relative_residue_-n-1}, 
and \eqref{LKvsR4} from \eqref{residue_closed_-m-2} and \eqref{residueOmega-n-3_BH}. 
We note that \eqref{LKvsR4} also follows from \eqref{BR_nu_BR_dom} and \eqref{scalar_curv_residue}. 
\end{proof}
\begin{corollary}\label{cor_LK-curv-residues}
If $n=2$ or $3$, Lipschitz-Killing curvatures (or equivalently, intrinsic volumes when $\dom$ is convex) can be obtained as linear combinations of the residues, relative residues and the residues of the boundary. 
\end{corollary}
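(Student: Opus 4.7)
The plan is essentially to read off the statement directly from the preceding proposition, since the indexing of the Lipschitz--Killing curvatures $C_0(\dom), C_1(\dom), \dots, C_n(\dom)$ in \eqref{Steiner_smooth} is such that, in low dimension, every index lies in the range $\{n, n-1, n-2, n-3\}$ covered by \eqref{LKvsR1}--\eqref{LKvsR4}.

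First I would treat the case $n=2$. Here the three Lipschitz--Killing curvatures are $C_2(\dom), C_1(\dom), C_0(\dom)$, which coincide with $C_n, C_{n-1}, C_{n-2}$ respectively. These are given by \eqref{LKvsR1}, \eqref{LKvsR2} and \eqref{LKvsR3}, each expressing the corresponding $C_k(\dom)$ as a scalar multiple of a single residue, relative residue, or boundary residue. So each $C_k(\dom)$ for $k\in\{0,1,2\}$ is manifestly obtainable from the data in question.

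Next I would handle $n=3$. The four Lipschitz--Killing curvatures are $C_3(\dom), C_2(\dom), C_1(\dom), C_0(\dom)$, corresponding to $C_n, C_{n-1}, C_{n-2}, C_{n-3}$; the required formulas are \eqref{LKvsR1}, \eqref{LKvsR2}, \eqref{LKvsR3} and \eqref{LKvsR4}. Note in particular that \eqref{LKvsR4} expresses $C_0(\dom)=\chi(\dom)$ as a linear combination of $\Rsub{\dom}(-n-3)$ and $\Rsub{\pO}(-n-1)$, which is the only formula requiring more than a single residue.

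Given that all four formulas have already been established in the previous proposition, there is no real obstacle: the corollary is just the observation that in dimensions $n=2$ and $n=3$ one runs out of Lipschitz--Killing curvatures before one runs out of available residue-type quantities. The only thing worth emphasizing is that starting from $n=4$ this bookkeeping breaks down because $C_{n-4}(\dom)=C_0(\dom)$ does not appear among \eqref{LKvsR1}--\eqref{LKvsR4}, which is consistent with the conjecture mentioned in the text that the analogous statement fails in higher dimensions.
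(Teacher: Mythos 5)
Your proposal is correct and matches the paper's (implicit) argument exactly: the corollary is an immediate consequence of the preceding proposition, since for $n=2$ the curvatures $C_2,C_1,C_0$ are exactly $C_n,C_{n-1},C_{n-2}$ and for $n=3$ the curvatures $C_3,C_2,C_1,C_0$ are exactly $C_n,C_{n-1},C_{n-2},C_{n-3}$, all covered by \eqref{LKvsR1}--\eqref{LKvsR4}. Your closing remark about the bookkeeping failing at $n=4$ is also consistent with the paper's subsequent discussion.
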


The third elementary symmetric function of the principal curvatures $S_3(x)$ cannot be expressed by a linear combination of the local residues of $\dom, \pO$ and the local relative residues. 
Therefore we conjecture 
that Corollary \ref{cor_LK-curv-residues} does not hold if $n\ge4$. 

\medskip
The intrinsic volumes are of basic importance in convex geometry because of the following characterization theorem by Hadwiger, which can be considered as a starting point of the valuation theory in integral geometry (cf. \cite{AF}).

Let $\mathcal{K}^n$ be the set of convex bodies in $\RR^n$. 
A {\it valuation} is a function $\mu\colon\mathcal{K}^n\to\RR$ that satisfies $\mu(\emptyset)=0$ and the inclusion-exclusion principle 
\[
\mu(K)+\mu(L)=\mu(K\cap L)+\mu(K\cup L)
\]
for any $K,L\in\mathcal{K}^n$ as far as $K\cup L\in\mathcal{K}^n$. 
\begin{theorem}\label{Hadwiger_thm} {\rm (Hadwiger, cf. \cite{SW} page 628)}
Suppose $\mu$ is a valuation which is continuous with respect to the Hausdorff metric and is invariant under rigid motions. 
Then there are constants $c_0,\dots,c_n$ such that 
\[
\mu(K)=\sum_{j=0}^nc_jV_j(K)
\]
for all $K\in\mathcal{K}^n$, where $V_j$ are the intrinsic volumes. 
\end{theorem}

Proposition \ref{inclusion-exclusion_axiom} implies that, except for some residues which are essentially equal to intrinsic volumes, the residues of convex bodies are not continuous with respect to the Hausdorff metric.

\subsection{Coefficients of Weyl's tube formula}\label{subsec_Weyl}
%
Let $M$ be an $m$ dimensional closed oriented submanifold of $\RR^n$. 
Weyl (\cite{W}) gave a formula for the volume of a tube of a small radius $r$ around $M$: 
\[
{\rm Vol}(M_r)=\omega_{n-m}r^{n-m}\sum_{i=0}^{\lceil m/2 \rceil}\frac{\mathfrak{k}_{2i}(M)\,r^{2i}}{(n-m+2)(n-m+4)\dots(n-m+2i)}, 
\]
where $\omega_{n-m}$ is the volume of the unit $(n-m)$-ball. 

We remark that when $M$ is a boundary of a compact body $\dom$, $\overline{\dom_r\setminus\dom}$ can be considered as a one-sided tube or a half-tube, and hence 
$
{\rm Vol}((\pO)_r)={\rm Vol}(\dom_r)-{\rm Vol}(\dom_{-r}),
$
where $\dom_{-r}$ is an inner parallel body and ${\rm Vol}(\dom_{-r})$ is given by substituting $r$ by $-r$ in \eqref{Steiner_principal_curv} when $|r|$ is sufficiently small. 

The first coefficient $\mathfrak{k}_0$ is just the volume ${\rm Vol}(M)$, which is essentially equal to the residue $\Res_M(-m)$. 
The second coefficient 
\[
\mathfrak{k}_2=\frac12\int_M\Sc\,dv
\]
can be expressed in terms of the residues as 
\begin{equation}\label{Weyl_tube_formula_second_coeff_residues}
\mathfrak{k}_2=-\frac{m}{o_{m-1}}\bigl(\Res_{M,\nu}(-m-2)+3\Res_M(-m-2)\bigr)  \notag
\end{equation}
by Theorem \ref{thm_Sc_residue}.

\subsection{Spectrum of the Laplacian}\label{subsect_spectrum}
%
Let $M=(M,g)$ be an $m$ dimensional closed Riemannian manifold, $\Delta$ be the Laplace-Beltrami operator, and 
$
{\rm Spec}(M,g)=\{0=\la_0<\la_1\le\la_2 \dots\}
$ 
be the spectrum of $M$, where $\la_i$ are the eigenvalues of $\Delta$. 
Consider the asymptotics of the trace of the heat kernel 
\[
\sum_{i=0}^\infty e^{-\la_i t}\sim\frac1{{(4\pi t)}^{m/2}}\sum_{i=0}^{\lceil m/2 \rceil}
a_it^i+O\Bigl(t^{1/2}\Bigr) \qquad (t\downarrow 0). 
\]
The first three coefficients $a_i$ were given by 
\begin{eqnarray}
a_0&=&\mbox{Vol}(M), \label{heatkernel0} \notag \\
a_1&=&\displaystyle \frac16\int_M\Sc\,dv, \label{heatkernel1} \notag \\
a_2&=&\displaystyle \frac1{360}\int_M\left(
2{|\Rm|}^2-2{|\mbox{\rm Ric}|}^2+5{\Sc}^2
\right)dv \label{heatkernel2} \notag
\end{eqnarray}
(cf. \cite{BGM}).

By \eqref{intrinsic_residue_-m} and \eqref{intrinsic_residue_-m-2}, $a_0$ equals $\Res_M(-m)$, and $a_1$ equals $\Res_M(-m-2)$ up to multiplication by constants. 
On the other hand, comparing $a_2$ and \eqref{intrinsic_residue_-m-4}, 
since the integration of ${|\Rm|}^2$, ${|\mbox{\rm Ric}|}^2$, and ${\Sc}^2$ are independent (\cite{G}), 
$a_2$ and $\Res_M(-m-4)$ are independent; they are sum of the same monomials with different coefficients\footnote{The same phenomenon has been observed in the relation between the residues of Brylinski's beta function of knots and the invariants of the vortex filament equation as far as are calculated (\cite{Fenici})}.

\subsection{Graham-Witten's conformal invariant for $4$-dimensional manifolds}\label{subsec_Graham-Witten}

\subsubsection{Definition}
%
%
In 1999 Graham and Witten introduced conformal invariants of even dimensional compact submanifolds $M$ via volume renormalization. 
This invariant, called the Graham-Witten energy, appears as the coefficient of the log term in the series expansion of the renormalized volume of a minimal submanifold in a Poincar\'e-Einstein space which is asymptotic to $M$ as the minimal submanifold approaches the conformal infinity. 
To be precise, let $X$ be an $(n+1)$ dimensional manifold with a smooth bounary $W$, $Y$ be an $(m+1)$ dimensional ``asymptotically minimal'' submanifold in $X$ that satisfies $Y\cap W=M^m$, and $r$ be the defining function of the boundary $W$ so that the Poincar\'e-Einstein metric near the boundary can be expressed by 
\[
g_+=\frac{dr^2+g_r}{r^2}, 
\]
where $g_r$ is a $1$-parameter family of metrics on $W$. 
Then the volume of $Y\cap\{r>\varepsilon\}$ with respect to $g_+$ has the following asymptotic expansion as $\e$ tends to $+0$,
\begin{equation}\label{eq_vol_renormalization_GW}
\mbox{\rm Vol}\big(Y\cap\{r>\varepsilon\}\big)=
\sum_{l=0}^{\lceil m/2 \rceil-1}b_l\e^{-m+2l} 
+\left\{
\begin{array}{ll}
A^o+o(1) & \hspace{0.4cm}\mbox{$m$ : odd}, \\[1mm]
K\log(1/\e)+A^e+o(1) & \hspace{0.4cm}\mbox{$m$ : even}.
\end{array}
\right.
\end{equation}
The constant term $A^o$ or $A^e$ is called the {\em renormalized volume}. 
It was proved in Proposition 2.1 of \cite{GW} that $A^o$ and $K$ are independent of the choice of the representative metric $g$ in the conformal class. 
This $K$ is called the {\em Graham-Witten energy} of $M$, denoted by $\mathcal{E}(M)$. 

Suppose $M$ is a closed submanifold of $\RR^n$. 
When $M$ is a $2$-dimensional surface in $\RR^3$ the Graham-Witten energy coincides with the Willmore energy. 
In the rest of this Subsection we assume $m=4$. 
Then $\mathcal{E}(M)$ is given by 
\begin{equation}\label{GW}
\mathcal{E}(M)=\frac1{128}\int_M\Bigl(
\big|\nabla^\perp H\big|^2-g^{ik}g^{jl}\langle h_{ij},H\rangle\langle h_{kl},H\rangle +\frac7{16}\,\vert H\vert^4
\Bigr)dv 
\end{equation}
(Graham and Reichert \cite{GR}, Zhang \cite{Zh}).\footnote{The coefficients are different from those in Guven \cite{Gu}. 
Guven's energy is not \M invariant. }

%
\subsubsection{Independence}
Consider Graham-Witten's energy $\mathcal{E}(M)$, the residue $\Rd{-8}{M}$ and $\nu$-weighted residue $\Rd{-8}{M,\nu}$ at $z=-8$ for $4$-dimensional closed submanifolds of $\RR^n$. 
They are all scale invariant, the first two are both \M invariant, and so is the third if $M$ is a boundary of a compact body. All are in a sense generalization of the Willmore energy of surfaces. 
We show that the three quantities are independent by computing their values of a family of $a$-hyper-spheroids (Definition \ref{def_hyper-spheroid}). 

Note that when $M$ is a hypersurface the Graham-Witten energy is given by 
\begin{eqnarray}
\mathcal{E}(M)&=&\displaystyle \frac1{128}\int_{M}\Bigl(
{|\nabla H|}^2-{\Vert h\Vert}^2H^2 +\frac7{16}\,H^4
\Bigr)dv \notag \\
&=&\displaystyle \frac1{128}\int_{M}\biggl({|\nabla H|}^2
-\Bigl(\sum_j\ka_j^{\,2}\Bigr){\Bigl(\sum_l\ka_l\Bigr)}^2
+\frac7{16}\,{\Bigl(\sum_i\ka_i\Bigr)}^4
\biggr)dv. \label{GW-hypersurface}
\end{eqnarray}

\begin{lemma}\label{a-hyper-spheroid-GW}
Let $S_a$ be a $4$-dimensional $a$-hyper-spheroid. 
Then Graham-Witten's energy $\mathcal{E}(S_a)$ is given by 
\begin{equation}\label{formulae_GW-spheroid}
\mathcal{E}(S_a)=\displaystyle \frac{\pi^2}{17920}\cdot \frac{10613a^8-7778a^6-2376a^4-16a^2-128+4725a^8\left(a^2-\displaystyle \frac{16}{15}\right)\omega(a)}{a^6(a^2-1)\phantom{\tilde l}\!\!} 
\end{equation}
when $a\ne1$, where $\omega(a)$ is given by \eqref{def_omega}, and $\mathcal{E}(S_1)=\pi^2$ when $a=1$. 
\end{lemma}

\begin{proof} 
The proof is parallel to that of Lemma \ref{a-hyper-spheroid-residues}. 
Substituting \eqref{principal_curvatures_S_a} and \eqref{coeff_f_ijk} to \eqref{grad_H_sq_m=4} and \eqref{GW-hypersurface}, we obtain  
\begin{equation}\label{f_GW_S_a}
\begin{array}{rcl}
\mathcal{E}(S_a)&=&\displaystyle \frac{2\pi^2}{128} \int_0^\pi 
\frac{a^2}{{\big(a^2\sin^2\theta_1+\cos^2\theta_1\big)}^6} 
\biggl[
9(a^2-1)^2\sin^2\theta_1\cos^2\theta_1{\big(a^2\sin^2\theta_1+\cos^2\theta_1+1\big)}^2 \\[4mm]
&&\displaystyle \quad +\frac{135}{16}a^2 \big(a^2\sin^2\theta_1+\cos^2\theta_1+3\big)
\biggl(a^2\sin^2\theta_1+\cos^2\theta_1+\frac13\biggr)^2
\biggl(a^2\sin^2\theta_1+\cos^2\theta_1-\frac15\biggr)
\biggr]\cdot \\[4mm]
&&\displaystyle 
\quad 
\sqrt{a^2\sin^2\theta_1+\cos^2\theta_1}\,\sin^3\theta_1\,d\theta_1.
\end{array}
\end{equation}
The conclusion follows from direct computation of \eqref{f_GW_S_a} using Maple. 
We remark that the case when $a=1$, i.e., $\mathcal{E}(S_1)=\pi^2$ was given in \cite{Zh}, page 21. 
\end{proof}

Put $\mathcal{E}(a)=\mathcal{E}(S_a)$, $\Res_\nu(a)=\Rd{-8}{S_a,\nu}$ and $\Res(a)=\Rd{-8}{S_a}$ (cf. Lemma \ref{a-hyper-spheroid-residues}) in what follows. 
$\mathcal{E}(a), \,\Res_\nu(a)$ and $\Res(a)$ are all continuous with respect to $a$. 

\smallskip
We put a graph created by Maple. The horizontal axis stands for $a$. 
Graham and Reichert pointed out (\cite{GR} page 3) that a numerical calculation using Mathematica suggests that $\mathcal{E}(S_a)$ is a convex function of $a$ which goes to $+\infty$ as $a$ approaches $+0$ and $+\infty$, and that $\mathcal{E}(S_a)$ has unique minimum at $a=1$ which corresponds to a round sphere. 
Our calculation using Maple also suggests that the same statement holds for $\Res_{S_a, \nu}(-8)$ and $2\Res_{S_a, \nu}(-8)+\Res_{S_a}(-8)$ (cf. \eqref{GW-residues}). 

\begin{figure}[htbp]
\begin{center}
\begin{minipage}{.3\linewidth}
\begin{center}
\includegraphics[width=\linewidth]{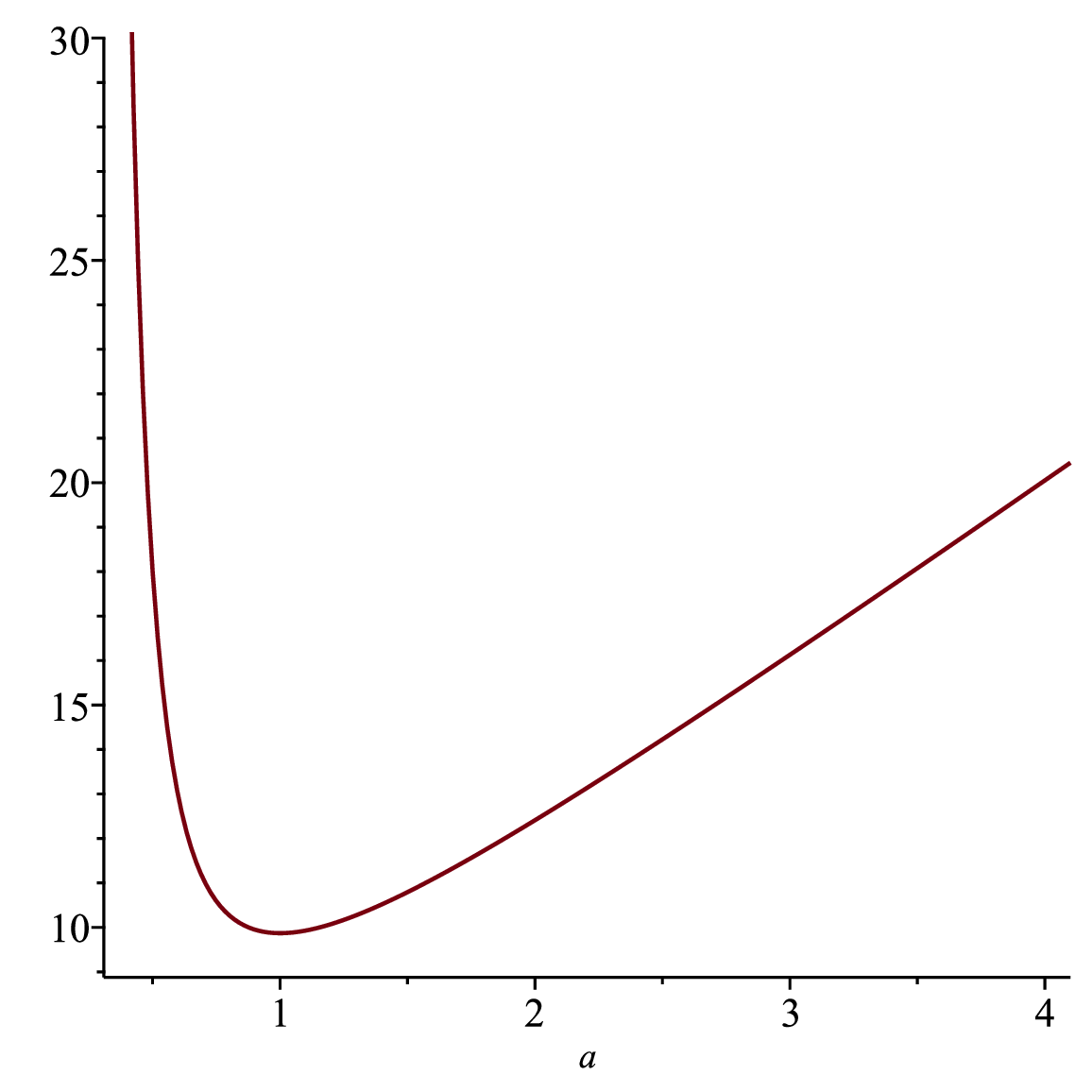}
\vspace{0.5cm}
\caption{Graham-Witten energy $\mathcal{E}(S_a)$}
\label{Q}
\end{center}
\end{minipage}
\hskip 0.1cm
\begin{minipage}{.3\linewidth}
\begin{center}
\includegraphics[width=\linewidth]{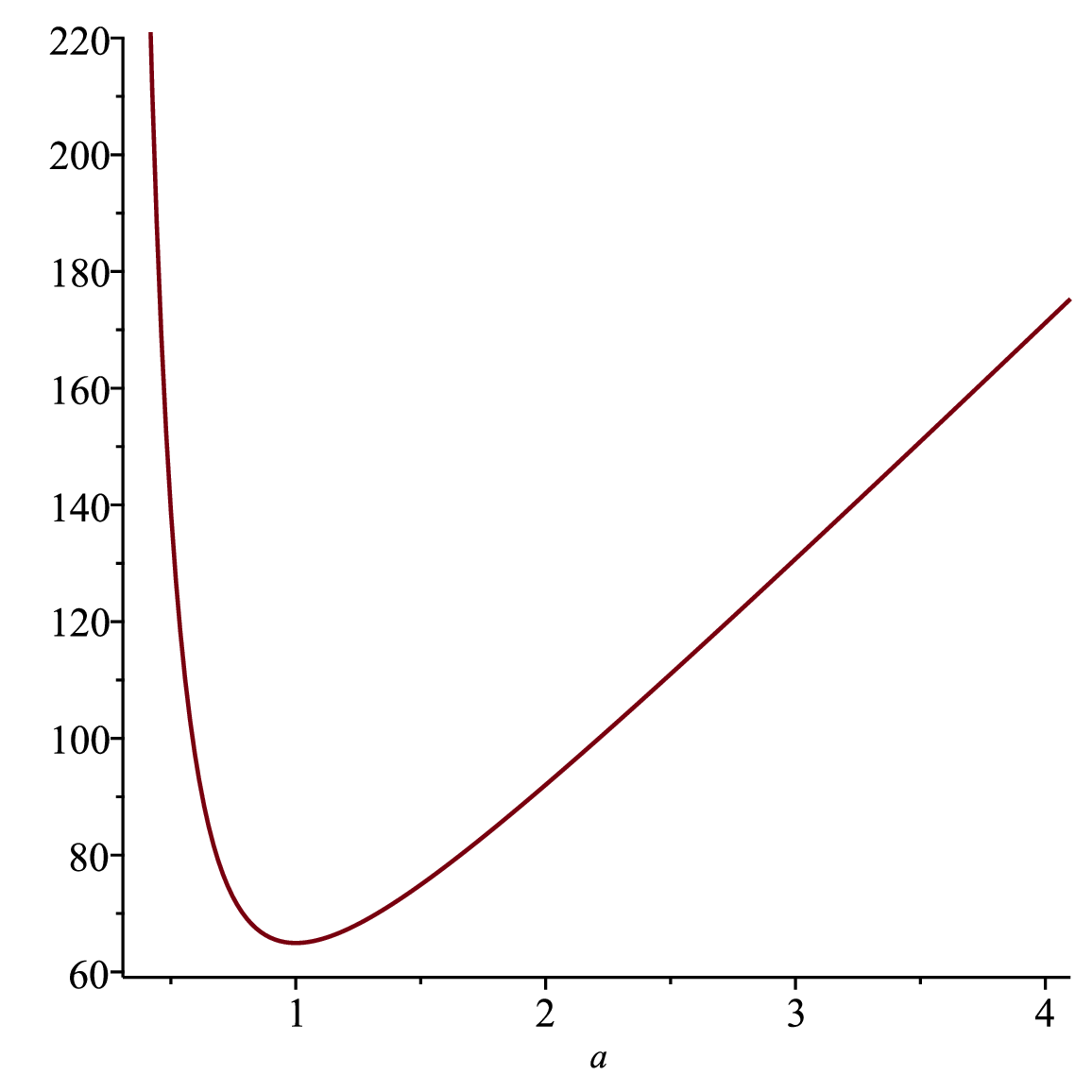}
\vspace{0.5cm}
\caption{$\nu$-weighted residue $\Rd{-8}{S_a,\nu}$}
\label{ROmega_m8}
\end{center}
\end{minipage}
\hskip 0.1cm
\begin{minipage}{.3\linewidth}
\begin{center}
\includegraphics[width=\linewidth]{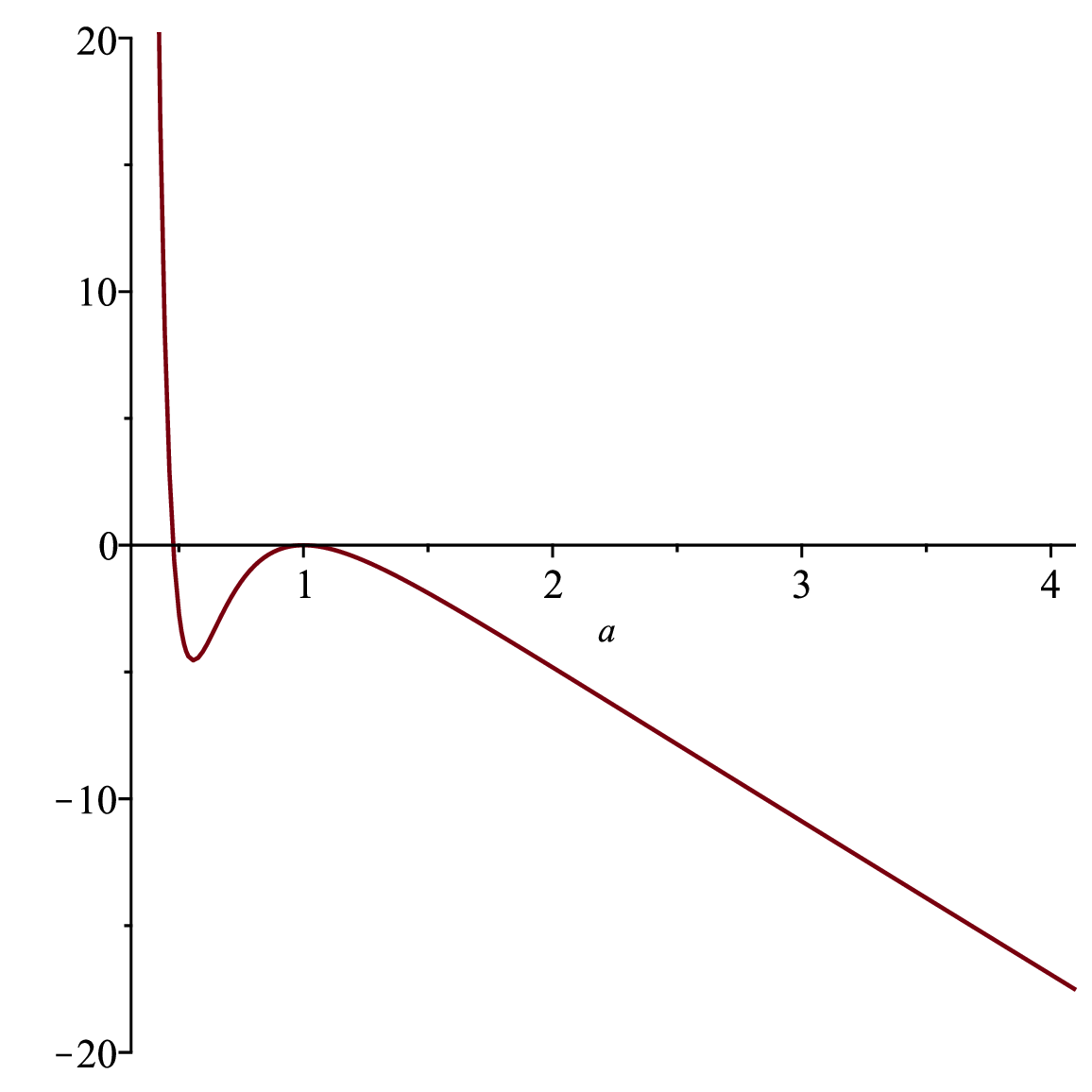}
\vspace{0.5cm}
\caption{Residue $\Rd{-8}{S_a}$}
\label{RM_m8}
\end{center}
\end{minipage}
\end{center}
\end{figure}

\begin{lemma}\label{lemma_GW_res}
\begin{enumerate}
\item There are constants $a^{{\rm{gw}}}_{l}$ and $a^{{\rm{gw}}}_{r}$ $(0<a^{{\rm{gw}}}_{l}\le 1\le a^{{\rm{gw}}}_{r})$ such that $\mathcal{E}(a)$ is positive and strictly decreasing on $(0,a^{{\rm{gw}}}_{l})$ and positive and strictly increasing on $(a^{{\rm{gw}}}_{r}, +\infty)$. 
$\mathcal{E}(a)$ tends to $+\infty$ as $a$ approaches $+0$ or $+\infty$. 
\item Similar statements hold for $\Res_\nu(a)$. Namely, 
there are constants $a^\nu_{l}$ and $a^\nu_{r}$ $(0<a^\nu_{l}\le 1\le a^\nu_{r})$ such that $\Res_\nu(a)$ is positive and strictly decreasing on $(0,a^\nu_{l})$ and positive and strictly increasing on $(a^\nu_{r}, +\infty)$. 
$\Res_\nu(a)$ tends to $+\infty$ as $a$ approaches $+0$ or $+\infty$. 
\item There are constants $a_{l}$ and $a_{r}$ $(0<a_{l}\le 1\le a_{r})$ such that $\Res(a)$ is positive and strictly decreasing on $(0,a_{l})$ and negative and strictly decreasing on $(a_{r}, +\infty)$. 
$\Res(a)$ tends to $+\infty$ as $a$ approaches $+0$ and to $-\infty$ as $a$ tends to $+\infty$. 
\end{enumerate}
\end{lemma}

The numerical experiments by Maple indicate that we can take $a^{{\rm{gw}}}_{l}=a^{{\rm{gw}}}_{r}=a^\nu_{l}=a^\nu_{r}=a_{r}=1$ and $a_{l}\approx 0.4771$. 

\begin{proof}
By the formulae \eqref{formulae_a-spheroid} and \eqref{formulae_GW-spheroid}, the following asymptotic estimates 
\begin{equation}\label{asymptotics}
\begin{array}{rlcrlc}
\mathcal{E}(a)&\sim \displaystyle \frac{\pi^2}{140\,a^6} & \hspace{0.5cm} (a\to+0), & \hspace{0.7cm} \mathcal{E}(a)&\sim \displaystyle \frac{135\,\pi^3}{1024}\,a & \hspace{0.5cm} (a\to+\infty), \\[4mm]
\Res_\nu(a)&\sim \displaystyle \frac{\pi^4}{24a^4} & \hspace{0.5cm} (a\to+0), & \hspace{0.7cm} \Res_\nu(a)&\sim \displaystyle \frac{35\,\pi^5}{256}\,a & \hspace{0.5cm} (a\to+\infty), \\[4mm]
\Res(a)&\sim \displaystyle \frac{2\pi^4}{315a^6} & \hspace{0.5cm} (a\to+0), & \hspace{0.7cm} \Res(a)&\sim \displaystyle -\frac{5\,\pi^5}{256}\,a  &\hspace{0.5cm} (a\to+\infty)
\end{array}
\end{equation}
hold, which imply the statements (1), (2) and (3). 
\end{proof}

\begin{corollary}\label{indep_GW_residues}
As functionals for closed $4$-dimensional submanifolds of $\RR^n$, 
Graham-Witten's energy $\mathcal{E}$, the residue $\Rd{-8}{M}$ and $\nu$-weighted residue $\Rd{-8}{M,\nu}$ at $z=-8$ are independent. 

To be precise, there are closed submanifolds of $\RR^5$, $M_1$ and $M_2$, that satisfy $\mathcal{E}(M_1)=\mathcal{E}(M_2)$ and $\Rd{-8}{M_1}\ne \Rd{-8}{M_2}$, and $M_1'$ and $M_2'$ that satisfy $\Rd{-8}{M_1'}= \Rd{-8}{M_2'}$ and $\mathcal{E}(M_1')\ne\mathcal{E}(M_2')$. 
Therefore $\Rd{-8}{M}$ cannot be expressed by $\mathcal{E}(M)$ and vice versa. 
\\ \indent
The same statements hold for the relations between $\Rd{-8}{M}$ and $\Rd{-8}{M,\nu}$ and between $\mathcal{E}(M)$ and $\Rd{-8}{M,\nu}$. 
\end{corollary}

\begin{proof}
Put $\widetilde a_l=\min\{a^{{\rm{gw}}}_{l}, a^\nu_{l}, a_{l}\}$ and  $\widetilde a_r=\max\{a^{{\rm{gw}}}_{r}, a^\nu_{r}, a_{r}\}$, where $a^{{\rm{gw}}}_{l}, \dots, a_{r}$ are constants given in the previous lemma. 
Let $\mathcal{E}(a)=\mathcal{E}(S_a)$, $\Res_\nu(a)=\Rd{-8}{S_a,\nu}$ and $\Res(a)=\Rd{-8}{S_a}$ for $a>0$ as before. 
\\ \smallskip \indent
(i) By Lemma \ref{lemma_GW_res} (1), there are constants $a_1$ and $a_2$ with $0<a_1<\widetilde a_l$ and $\widetilde a_r<a_2$ such that $\mathcal{E}(a_1)=\mathcal{E}(a_2)$. Lemma \ref{lemma_GW_res} (3) implies $\Res(a_1)\ne\Res(a_2)$. 
\\ \indent
On the other hand, if we put $M_1=S^4$ and $M_2=S^4 \sqcup S^4$ then \eqref{unit_4-sphere} 
implies that $\Rd{-8}{M_1}=\Rd{-8}{M_2}=0$ whereas $\mathcal{E}(M_1)=\pi^2\ne 2\pi^2=\mathcal{E}(M_2)$. 
\\ \smallskip \indent
(ii) The same argument holds for $\Rd{-8}{M,\nu}$ and $\Rd{-8}{M}$. 
\\ \smallskip \indent
(iii) The asymptotic estimates \eqref{asymptotics} imply that there are constants $a_1'$ and $a_2'$ with $0<a_1'<\widetilde a_l$ and $\widetilde a_r<a_2'$ such that $\mathcal{E}(a_1')=\mathcal{E}(a_2')$ and that $\Res_\nu(a_1')\ne\Res_\nu(a_2')$. 
The rest can be shown in the same way. 
\end{proof}

\subsubsection{Approximation by residues}
%
We give an approximation of the Graham-Witten energy by a linear combination of the residue and $\nu$-weighted residue that is optimal from a viewpoint of necessary order of derivatives to express the integrands of the difference (cf. Subsection \ref{subsub_order_diff}). 

\begin{proposition}
Let $f$ be a function from $T_xM$ to $(T_xM)^\perp$ so that a neighbourhood of $M$ of $x$ are given by a graph of $f$. 
Then 
\begin{equation}\label{GW-residues}
\mathcal{E}(M)-\frac{3}{2\pi^2}\big(\Res_{M,\nu}(-8)+2\Res_M(-8)\big)
\end{equation}
can be expressed by the integral over $M$ of a function such that only the derivatives of $f$ of order up to $2$ are involved. 
\end{proposition}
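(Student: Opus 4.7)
The plan is to compute the combination pointwise via the graph representation \eqref{series_expansion_f_4} at each $x\in M$, then use Green's theorem on the closed manifold $M$ to absorb the higher-order derivative contributions into divergences.

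First, I would reduce the residues to order $3$. Apply Theorem \ref{prop_residue_H_Sc_graph} with $m=4$, noting $o_{3}/(32\cdot 4\cdot 6)=\pi^{2}/384$: there are pointwise decompositions
\[
\Res_{M,x}^{\rm loc}(-8)=\tfrac{\pi^{2}}{384}\bigl(3\Delta|H|^{2}-4\Delta\Sc\bigr)+P(x),\qquad
\Res_{M,\nu,x}^{\rm loc}(-8)=-\tfrac{\pi^{2}}{384}\bigl(5\Delta|H|^{2}-4\Delta\Sc\bigr)+Q(x),
\]
where $P(x),Q(x)$ are expressible in derivatives of $f^{x}$ only up to order $3$. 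A quick check (e.g.\ for the $d_{iiii}$ coefficients, matching the $192(4\kappa_i-H)\pi^2/1536$ in $\Res_{M,x}^{\rm loc}(-8)$ against the $\kappa\!\cdot\! d$ contributions of $3\Delta|H|^{2}-4\Delta\Sc$) confirms that the correction kills not only the pure $d_{****}$ terms but also the cross terms $\kappa\cdot d$, so $P(x),Q(x)$ are genuinely of order $3$, containing only $\kappa$-polynomial pieces and bilinear expressions in the $c_{***}$. Since $\int_{M}\Delta(\cdot)\,dv=0$ on closed $M$, we obtain $\Res_{M,\nu}(-8)+2\Res_{M}(-8)=\int_{M}\bigl(Q(x)+2P(x)\bigr)\,dv$.

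Next I would inspect the Graham--Witten integrand \eqref{GW-hypersurface}. The terms $\|h\|^{2}H^{2}$ and $H^{4}$ are polynomials in the principal curvatures $\kappa_{i}$ alone, hence of order $2$. The only order-$3$ piece is $|\nabla H|^{2}$, which by \eqref{grad_H_sq_m=4} equals $4\sum_{j}(2c_{jjj}+\sum_{i}c_{ijj})^{2}$ at the origin; this is a specific non-degenerate quadratic form in the $c_{***}$. The heart of the argument is then to show that, as integrands on $M$,
\[
\tfrac{1}{128}|\nabla H|^{2}\;\equiv\;\tfrac{3}{2\pi^{2}}\bigl[c\text{-part of }Q(x)+2P(x)\bigr]\pmod{\text{divergences and }\kappa\text{-only terms}}.
\]
To verify this, expand the $c_{***}$ content of $Q(x)+2P(x)$ using the $c$-lines of \eqref{R_-8_c-part} and \eqref{R_nu_-8_d-part}, together with the $c$-content of $\Delta|H|^{2}$ and $\Delta\Sc$ read off from \eqref{grad_H_sq_m=4} and \eqref{Delta_Sc_center}. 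Decompose both sides along the natural basis of quadratic forms $\{c_{iii}^{2},\;c_{iii}c_{ijj},\;c_{iij}^{2},\;c_{iik}c_{jjk},\;c_{ijk}^{2}\}$ (with distinct indices where required), compare coefficient by coefficient, and verify that any residual difference is matched by a constant multiple of the $c$-content of $\Delta\Sc$ (or of the Laplacian of some second-order expression), hence vanishes upon integration. What then survives in the global integrand is a polynomial in the $\kappa_{i}$, yielding the required order-$2$ integrand $g$.

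The main obstacle is the combinatorial verification in the previous paragraph: the scalar $\tfrac{3}{2\pi^{2}}$ and the weight ratio $1{:}2$ between $\Res_{M,\nu}(-8)$ and $\Res_{M}(-8)$ are forced by demanding simultaneous cancellation of all five independent quadratic forms in the $c_{***}$, and no apparent symmetry shortcut avoids treating each form separately. Once this bookkeeping is completed, the minimality of order $2$ is consistent with the independence statement in Corollary \ref{indep_GW_residues}: no other linear combination of $\Res_{M,\nu}(-8)$ and $\Res_{M}(-8)$ could reduce the $c_{***}$ quadratic form coming from $|\nabla H|^{2}$ to a divergence.
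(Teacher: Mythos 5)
Your outline follows the paper's proof essentially step for step: decompose the two local residues via Theorem \ref{prop_residue_H_Sc_graph} so that the fourth-order derivative content is absorbed into $\pm\frac{\pi^2}{384}$-multiples of $\Delta|H|^2$ and $\Delta\Sc$ (which integrate to zero on closed $M$), observe that the only third-order content of the Graham--Witten integrand comes from $|\nabla^\perp H|^2$, and then match the third-derivative quadratic forms coefficient by coefficient on the five-dimensional basis; the constants $\tfrac{3}{2\pi^2}$ and the weights $1\!:\!2$ are indeed forced exactly as you say, and in fact the match in the paper is exact with no residual divergence needed (the two sides come out as $\frac{1}{128}(1,2,1,2,0)$ and $\frac{3}{2\pi^2}\cdot\frac{\pi^2}{192}(1,2,1,2,0)$ in the notation of \eqref{f***-terms_residues_Delta_H_Sc}--\eqref{app_GW_modified_residues}).

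One genuine caveat: the proposition is stated for arbitrary codimension ($f^x\colon T_xM\to(T_xM)^\perp$ with $(T_xM)^\perp$ of any dimension), but every formula you cite for the verification --- \eqref{grad_H_sq_m=4}, the $c$-lines of Lemma \ref{lemma_residue_H_Sc_graph}, and \eqref{Delta_Sc_center} --- lives in the hypersurface Lemma, where the coefficients $c_{ijk}$ are scalars. As written, your bookkeeping would only establish the codimension-one case. To get the general statement you must run the same comparison with the vector-valued third derivatives, i.e.\ on the basis $\sum_i\ip{f_{iii}}{f_{iii}}$, $\sum_{j\ne k}\ip{f_{jjk}}{f_{kkk}}$, $\sum_{j\ne k}\ip{f_{jkk}}{f_{jkk}}$, $\sum_{k<\ell,\,j\ne k,\ell}\ip{f_{jkk}}{f_{j\ell\ell}}$, $\sum_{j<k<\ell}\ip{f_{jk\ell}}{f_{jk\ell}}$, reading the coefficients off \eqref{a_nu_-8}, \eqref{a-a_nu-hot}, \eqref{Delta_H_at_0-2}, \eqref{Delta_Sc0_c_d_terms} and \eqref{GW_first_term} via the moment integrals on $S^3$. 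The combinatorics and the resulting coefficient vectors are unchanged (they do not see the codimension), so the gap is easily repaired, but it should be closed explicitly; and of course the coefficient table itself, which you defer as ``bookkeeping,'' is the entire substance of the proof and does need to be computed.
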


\begin{proof}
Assume $x=0$, $T_xM=\RR^4$, and $f\colon\RR^4\to\RR^{n-4}$ as before. 

Let us first calculate the Graham-Witten energy. 
We use the same notation as in (2) of the proof of Lemma \ref{lem_la_A_f_Delta_Sc_H}. We use indices $i,j,k,\ell,p,\dots\in\{1,2,3,4\}$ for $M$ and $\sigma, \tau, \upsilon,\dots\in\{1,\dots,n\}$ for $\RR^n$.
Then 
\[
g_{ij}=\delta_{ij}+\ip{f_i}{f_j} \quad 1\le i,j\le 4, \quad
g_{\sigma\tau}=\delta_{\sigma\tau} \quad 4<\sigma, \tau, \quad
g_{i\sigma}=\ip{f_i}{\vect e_\sigma}=\partial_if^\sigma \quad i\le 4<\sigma,
\]
\[
\Ga_{ij}^k\eao 0 \quad i,j,k\le 4, \quad
\Ga_{\sigma\tau}^\upsilon=0 \quad \sigma,\tau,\upsilon>4, \quad
\Ga_{ij}^\sigma\eao \partial_i\partial_j f^\sigma \quad i,j\le 4<\sigma, \quad
\Ga_{j\tau}^\sigma\eao 0 \quad j\le 4<\sigma,\tau. 
\]
The second fundamental form is given by 
\[
h(u,v)=\sum_{\sigma>4}\Ga_{ij}^\sigma u_iv_j\partial_\sigma 
\eao u_iv_j\partial_i\partial_j f \quad u,v\in T_xM\cong\RR^4, 
\]
and hence $h_{ij}\eao f_{ij}$, 
which, together with \eqref{mean_curv_vect_H}, implies that the second term of the integrand of the  Graham-Witten energy \eqref{GW} is given by 
\begin{equation}\label{GW_second_term}
g^{ik}g^{jl}\langle h_{ij},H\rangle\langle h_{kl},H\rangle
\eao \sum_{i,j}{\ip{h_{ij}}{H}}^2
=\sum_{i,j}{\big\langle{f_{ij}},{\sum_k f_{kk}}\big\rangle}^2.
\notag
\end{equation}

Since $\partial_jg^{k\ell}\eao 0$, \eqref{mean_curv_vect_H} implies \[
\partial_jH \eao \sum_k\partial_j\partial_k^{\,2}f,
\]
hence
\[
\nabla_{e_j}^\perp H=\sum_{\sigma>4}\left(\partial_jH^\sigma+\Ga_{ij}^\sigma H^i
\right)\partial_\sigma
\eao \sum_k\partial_j\partial_k^{\,2}f \qquad (\partial_\sigma\in T_x\RR^{n-4}),
\]
therefore the first term of the integrand of the Graham-Witten energy \eqref{GW} is given by 
\begin{equation}\label{GW_first_term}
{\big|\nabla^\perp H\big|}^2
\eao \sum_{j,k,\ell}\ip{f_{jkk}}{f_{j\ell\ell}}.
\end{equation}

Since the third term $|H|^4$ of \eqref{GW} can be expressed in terms of derivatives of $f$ up to order $2$, the highest order of derivatives involved in the integrand of \eqref{GW} is three, which appears in \eqref{GW_first_term}. 
Therefore the $f_{\ast\ast\ast}$-terms of the integrand of the Graham-Witten energy \eqref{GW} are given by 
\begin{equation}\label{f***-term_GW}
\frac1{128}\biggl(
\sum_i\ip{f_{iii}}{f_{iii}}+ 2\sum_{j\ne k}\ip{f_{jjk}}{f_{kkk}}
+\sum_{j\ne k}\ip{f_{jkk}}{f_{jkk}}+ 2\sum_{k<\ell, j\ne k,\ell}
\ip{f_{jkk}}{f_{j\ell\ell}}
\biggr).
\end{equation}

\smallskip
Next we consider the residues. 
Theorem \ref{prop_residue_H_Sc_graph} implies that 
\begin{equation}\label{Res_Res_Delta_Hsq_Sc}
\Res_{M,\nu,0}^{\rm loc}(-8)+2\Res_{M,0}^{\rm loc}(-8)+\frac{\pi^2}{384}\left(-\Delta {|H|}^2+4\Delta\Sc\right)
\end{equation}
can be expressed in terms of the derivatives of $f$ of order up to $3$. 
On the other hand, \eqref{f***-terms_R_M_nu_x_-8}, \eqref{f***-terms_R_M_x_-8}, \eqref{Delta_Sc_at_0-3} and \eqref{Delta_H_at_0_third} imply that the $f_{\ast\ast\ast}$-terms of \eqref{Res_Res_Delta_Hsq_Sc} are given by 
\begin{equation}\label{app_GW_modified_residues}
\frac{\pi^2}{192}\bigg(
\sum_i\ip{f_{iii}}{f_{iii}}
+2\sum_{j\ne k}\ip{f_{jjk}}{f_{kkk}}
+\sum_{j\ne k}\ip{f_{jkk}}{f_{jkk}}
+2\sum_{k<\ell, j\ne k,\ell}
\ip{f_{jkk}}{f_{j\ell\ell}}
\bigg). 
%
\end{equation}

Now the conclusion follows from \eqref{f***-term_GW} and \eqref{app_GW_modified_residues}. 
\end{proof}

\subsubsection{Expression by residues and curvature energies}\label{subsub_GW_residues_4-hypersurface}
%
%
We give an explicit expression of \eqref{GW-residues} 
when $M$ is a hypersurface. 

\begin{theorem}\label{prop_GW-residues=W+PCE}
When $M$ is a $4$-dimensional closed hypersurface \eqref{GW-residues} is given by 
\begin{equation}\label{GW-residues=W+PCE}
\mathcal{E}(M)-\frac{3}{2\pi^2}\big(\Res_{M,\nu}(-8)+2\Res_M(-8)\big)
=-\frac1{2048}\biggl(12\int_M{|W|}^2\,dv+5Z(M)\biggr),
\notag
\end{equation}
where $W$ is the Weyl tensor and $Z(M)$ is the \M invariant principal curvature energy given by \eqref{Z(M)}.  
\end{theorem}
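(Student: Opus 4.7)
The strategy is to reduce the left-hand side, pointwise in a local graph representation at each point of $M$, to an integral of a symmetric fourth-order polynomial in the principal curvatures $\kappa_{1},\dots,\kappa_{4}$, and then identify this polynomial with the asserted linear combination. Since the Graham--Witten energy is conformally invariant and the two residues are M\"obius invariant by Theorem \ref{M-inv-res-closed} and Proposition \ref{Mobius_inv_R_nu}, the left-hand side is a M\"obius invariant; thus the polynomial that appears must sit in the M\"obius-invariant part of the space of integrals of symmetric quartic polynomials in the $\kappa_{i}$.

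The starting point is the relation \eqref{app_GW_modified_residues}, which I combine with the hypersurface expression \eqref{GW-hypersurface} for $\mathcal{E}(M)$. The $(1,2,1,2,0)_{f_{\ast\ast\ast}}$ third-derivative contribution of $\frac{3}{2\pi^{2}}(\Res_{M,\nu}(-8)+2\Res_{M}(-8))$ exactly matches the third-derivative part of $|\nabla H|^{2}/128$ recorded in \eqref{f***-term_GW}; the mismatch is precisely the Laplacian combination $-\frac{1}{256}(\Delta|H|^{2}-4\Delta \Sc)$, plus a polynomial in the $\kappa_{i}$ arising from \eqref{R_kappa}, \eqref{R_nu_kappa} and the $\kappa$-parts of $\|h\|^{2}H^{2}$ and $H^{4}$ in \eqref{GW-hypersurface}.

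Next, I integrate over the closed manifold $M$. Both $\int_{M}\Delta|H|^{2}\,dv$ and $\int_{M}\Delta \Sc\,dv$ vanish by Green's theorem. The residual $c_{ijk}$-contributions coming from $\Res_{M}(-8)$, $\Res_{M,\nu}(-8)$ (formulas \eqref{R_-8_c-part}--\eqref{R_nu_-8_d-part}) and from $|\nabla H|^{2}$ (formula \eqref{grad_H_sq_m=4}) are matched against the $c$-parts of $\Delta \Sc$ and $\Delta |H|^{2}$ via \eqref{Delta_Sc_center}--\eqref{Delta_Sc_3} and \eqref{Delta_H_m=4}; the $d_{ijkl}$-contributions cancel thanks to Theorem \ref{prop_residue_H_Sc_graph}. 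After this cancellation,
\[
\mathcal{E}(M)-\frac{3}{2\pi^{2}}\bigl(\Res_{M,\nu}(-8)+2\Res_{M}(-8)\bigr)=\int_{M}P(\kappa_{1},\kappa_{2},\kappa_{3},\kappa_{4})\,dv
\]
for an explicit symmetric degree-four polynomial $P$, which I expand in the power-sum basis $\{p_{1}^{4},p_{1}^{2}p_{2},p_{2}^{2},p_{1}p_{3},p_{4}\}$ where $p_{k}=\sum_{i}\kappa_{i}^{k}$.

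To conclude, I match $P$ to $-\frac{1}{2048}(12|W|^{2}+5\,(\text{integrand of }Z))$. The Gauss equation in principal-curvature coordinates yields for a hypersurface in $\mathbb{R}^{5}$
\[
|W|^{2}=\tfrac{1}{3}p_{1}^{4}-\tfrac{8}{3}p_{1}^{2}p_{2}+\tfrac{7}{3}p_{2}^{2}+4p_{1}p_{3}-4p_{4},
\]
while $\int_{M}\kappa_{1}\kappa_{2}\kappa_{3}\kappa_{4}\,dv$ is a topological invariant by Gauss--Bonnet--Chern. Since $P$ lies in the three-dimensional M\"obius-invariant subspace spanned by $|W|^{2}$, the integrand of $Z$, and $\kappa_{1}\kappa_{2}\kappa_{3}\kappa_{4}$ (the uniqueness of $Z$ beyond $|W|^{2}$ and the Euler integrand being the content of the characterization preceding the theorem), matching coefficients of three of the five monomials fixes the combination and shows the coefficient of the Euler integrand vanishes. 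The principal obstacle is the bookkeeping in the second and third steps: checking that the numerous $c_{ijk}$- and $d_{ijkl}$-terms cancel cleanly and that the resulting polynomial in the $\kappa_{i}$ matches exactly the combination $-\frac{1}{2048}(12|W|^{2}+5Z)$. This is a direct though combinatorially lengthy symbolic computation.
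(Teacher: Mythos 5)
Your proposal follows essentially the same route as the paper: reduce via \eqref{app_GW_modified_residues}, \eqref{GW-hypersurface} and \eqref{grad_H_sq_m=4} to a comparison of the $\kappa_i$-terms extracted from \eqref{R_kappa}, \eqref{R_nu_kappa} and \eqref{Delta_H_m=4}--\eqref{Delta_Sc_3}, then match against \eqref{Weyl_hypersurface} and \eqref{PCE}. Your use of M\"obius invariance together with Theorem \ref{thm_PCE} to cut the final coefficient-matching from five monomials to three is a pleasant economy but not a different argument.
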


\begin{proof}
This can be shown by direct computation. 
By \eqref{grad_H_sq_m=4}, \eqref{GW-hypersurface}, and \eqref{app_GW_modified_residues}, we have only to extract $\ka_i$ terms from 
\[
\frac1{128}\biggl(-\Bigl(\sum_j\ka_j^{\,2}\Bigr){\Bigl(\sum_l\ka_l\Bigr)}^2
+\frac7{16}\,{\Bigl(\sum_i\ka_i\Bigr)}^4\biggr)
-\frac{3}{2\pi^2}\left(\Res_{M,\nu,0}^{\rm loc}(-8)+2\Res_{M,0}^{\rm loc}(-8)+\frac{\pi^2}{384}\left(-\Delta {|H|}^2+4\Delta\Sc\right)\right)
\]
using \eqref{R_kappa}, \eqref{R_nu_kappa}, \eqref{Delta_H_m=4} - \eqref{Delta_Sc_3}, and compare the result with \eqref{Weyl_hypersurface} and \eqref{PCE}. 
We remark that $\Delta H^2$ can be computed either from \eqref{Delta_H_sq_gen} or from $\Delta H^2=2(H\Delta H+{|\nabla H|}^2)$, \eqref{Delta_H_m=4} and \eqref{grad_H_sq_m=4}. 
\end{proof}

\section{Applications and miscellaneous topics}

\subsection{Variational problems and minimizers}
It is in general difficult to study variations of the energy and residues. 
In this subsection we agree that $M$ is an $m$ dimensional closed submanifold of $\RR^n$. 
Let us deal with the scale invariant (possibly \M invariant) energy or residues, which seem to be particularly interesting. 

Suppose $m=1$. The energy $E_K(-2)$ is the integral of a quantity that is determined by the global data of knots, which is the cause of the difficulty. See \cite{B11,BRS,BV,Gv,He,IN14,IN15,R}. 
As for the energy minimizers, round circles give the minimum value of $E_M(-2)$. This was shown when codimension is $2$ by \cite{FHW}. The general case follows from the ``wasted length'' argument by Doyle and Schramm (see for example \cite{AS}), which holds regardless of the codimension. On the other hand, $E_{M,\nu}(-2)$ when codimension is $2$ is essentially equal to the ``energy'' studied in \cite{OS1}, which is not bounded above nor below. 

Suppose $m=2$ and the codimension is $1$. 
The residue is essentially Willmore energy (modulo the Euler characteristic). 
Round spheres give the minimum value of $\Res_{M}(-4)=\frac\pi8\int_M(\ka_1-\ka_2)^2\,dv$ (cf. \eqref{residue_surface_2}). 
On the other hand, Willmore's results claiming that round spheres minimize $\int_M (\ka_1+\ka_2)^2\,dv$ implies that 
\[
\Res_{M,\nu}(-4)=-\frac\pi8\int_M 3\ka_1^2+3\ka_2^2+2\ka_1\ka_2\,dv
=-\frac\pi8\biggl(
3\int_M (\ka_1+\ka_2)^2\,dv+16\pi g-16\pi
\biggr),
\]
where $g$ is the genus of $M$, attains the maximum value at round spheres. 

Suppose $m=4$ and the codimension is $1$. 
Let us consider $2\Res_M(-8)+\Res_{M,\nu}(-8)$ for $4$-dimensional closed hypersurfaces in $\RR^5$. 
Theorem \ref{prop_GW-residues=W+PCE} implies 
\[
\Res_{M,\nu}(-8)+2\Res_M(-8)
=\frac{2\pi^2}{3}\mathcal{E}(M)
+\frac{\pi^2}{3072}\biggl(12\int_M{|W|}^2\,dv+5Z(M)\biggr). 
\]
Proposition 1.2 of \cite{GR} claims that the second variation of the Graham-Witten energy $\mathcal{E}$ at a round sphere $S^4$ is nonnegative, and is positive in directions transverse to the orbit of the conformal group. See also Theorem 1.2 of \cite{T}. 
On the other hand, Corollary \ref{cor_pos_W_PCE} implies that the second variation of $12\int_M{|W|}^2\,dv+5Z(M)$ at a round sphere is also nonnegative, and is positive in directions transverse to the orbit of the \M group. 
Therefore we have 
\begin{theorem}\label{prop_second_var_residue}
The second variation of $\Res_{M,\nu}(-8)+2\Res_M(-8)$ at a round sphere is nonnegative, and is positive in directions transverse to the orbit of the \M group. 
\end{theorem}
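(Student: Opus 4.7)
The plan is to read this off directly from Theorem \ref{prop_GW-residues=W+PCE}. Rearranging the identity in that theorem gives
\[
\Res_{M,\nu}(-8)+2\Res_M(-8) = \frac{2\pi^2}{3}\,\mathcal{E}(M) + \frac{\pi^2}{3072}\left(12\int_M {|W|}^2\,dv + 5\,Z(M)\right),
\]
so it suffices to show that each summand has a nonnegative second variation at $M=S^4\subset\RR^5$, and that both second variations are strictly positive on the same class of normal variations, namely those transverse to the orbit of the \M group.

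First, by Proposition 1.2 of \cite{GR} (see also Theorem 1.2 of \cite{T}), the second variation of $\mathcal{E}$ at the round $S^4$ is nonnegative and is strictly positive on directions transverse to the orbit of the conformal group of $S^4$. Second, by Corollary \ref{cor_pos_W_PCE}, the second variation of $12\int_M {|W|}^2\,dv + 5\,Z(M)$ at $S^4$ is nonnegative and strictly positive on directions transverse to the \M orbit. Both coefficients $\frac{2\pi^2}{3}$ and $\frac{\pi^2}{3072}$ in the decomposition above are positive, so adding the two inequalities yields nonnegativity of the second variation of $\Res_{M,\nu}(-8)+2\Res_M(-8)$, and positivity on any normal variation on which at least one of the two summands is strictly positive.

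The remaining point is to identify the two notions of orbit. For the round sphere $S^4\subset\RR^5$ the conformal group of the induced round metric is $\mathrm{O}(5,1)$, which coincides with the \M group of $\RR^5\cup\{\infty\}$. Moreover, the infinitesimal action on the space of embedded submanifolds is in both cases obtained by restricting infinitesimal \M transformations of $\RR^5\cup\{\infty\}$ to $S^4$; so the tangent space to the conformal-group orbit through $S^4$ and the tangent space to the \M-group orbit through $S^4$ consist of the same normal variations. Consequently, a variation transverse to the \M orbit is transverse to the conformal orbit as well, and on such a variation the Graham--Witten part is already strictly positive. This gives the desired strict positivity and completes the argument.

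The only non-routine ingredient is the matching of the two ``kernel'' statements for the two second variations, i.e.\ that both summands have their null directions at $S^4$ given precisely by the infinitesimal \M transformations of $\RR^5$. Both results cited are formulated in exactly these terms, so no extra work is required beyond invoking them.
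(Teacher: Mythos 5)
Your proposal is correct and follows essentially the same route as the paper: it rearranges the identity of Theorem \ref{prop_GW-residues=W+PCE}, invokes Proposition 1.2 of \cite{GR} (Theorem 1.2 of \cite{T}) for the Graham--Witten part and Corollary \ref{cor_pos_W_PCE} for the $12\int_M|W|^2\,dv+5Z(M)$ part, and adds the two nonnegative second variations. The extra remark identifying the conformal-group orbit of $S^4$ with the \M-group orbit is a reasonable elaboration of a point the paper leaves implicit.
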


The author does not know whether standard spheres give the minimum or maximum values of the ($\nu$-weighted) energy or residue at $z=-2m$ among all the $m$-dimensional closed connected submanifolds of Euclidean spaces. 

Numerical experiment by Maple (Figure \ref{RM_m8}) suggests that round $4$-spheres are not neither minimum nor maximum point of $\Res_{M}(-8)$ among $4$-dimensional closed hypersurfaces. 

\subsection{Characterization (identification) of manifolds}

Since residues are similar to spectra of Laplacian as we saw in Subsection \ref{subsect_spectrum}, it is tempting to ask Kac's question about spectra:``Can one hear the shape of a drum?" for the residues, and more generally, for the meromorphic energy function as well, namely,``Can one identify a space by the residues or by the meromorphic energy function?". 
If two manifolds are isometric then the meromorphic energy functions are equal, which implies that the residues are equal, although the inverse implications do not hold in general as is explained in what follows. Let us deal with only submanifolds of Euclidean space here. 

The fact that the inverse of the first implication does not hold can be reduced to classical results by Mallows and Clark \cite{MC} and Caelli \cite{C} as was explained in \cite{Onew}. 
As for the inverse of the second implication, we give an example of a pair of manifolds with the same residues that have different meromorphic energy functions. 
\begin{lemma}\label{lem_diam}
There is a pair of hexagons with all interior angles at $2\pi/3$ with the same area and the same circumference and different diameters. 
\end{lemma}
\begin{proof}
Let $H(a,b,c)$ be a hexagon with all interior angles at $2\pi/3$ with edge length $a,b,c,a,b,c$ in this cyclic order. 
It has area $(\sqrt3/2)(ab+bc+ca)$ and circumference $2(a+b+c)$. 
Suppose $H(a,b,c)$ and $H(1,2,3)$ have the same area and the same circumference. Then $a,b$ and $c$ can be expressed by 
\[
a(\theta)=2+\frac{\sqrt3}3\cos\theta+\sin\theta, \>
b(\theta)=2+\frac{\sqrt3}3\cos\theta-\sin\theta, \>
c(\theta)=2-2\frac{\sqrt3}3\cos\theta
\]
for some $\theta$. The diameter of $H(a,b,c)$ is equal to $\sqrt{(4\sqrt3+55)/3}\approx 4.54$ when $\theta=0$ whereas $\sqrt{21}\approx 4.58$ when $\theta=\pi/6$. 
\end{proof}

We remark that the hexagons $H(a,b,c)$ are included in what Waksman calls the non-generic case. 
\begin{proposition}
\begin{enumerate}
\item There are smooth closed $1$-dimensional manifolds $M_1$ and $M_2$ with the same residues but different meromorphic energy functions. 
\item There are compact bodies $\dom_1$ and $\dom_2$ in $\RR^2$ with the same residues but different meromorphic energy functions. 
\end{enumerate}
\end{proposition}
\begin{proof}
Let $\dom(\theta)$ be a smooth convex body in $\RR^2$ which can be obtained from a hexagon $H(a(\theta), b(\theta), c(\theta))$ by smoothing the vertices in the same way in a small neighbourhood at each vertex. 
We assume that the smoothing is carried out in a symmetric way with respect to the two adjacent edges. 
Let $M(\theta)$ be the boundary of $\dom(\theta)$. 
Then $M(\theta)$ has the same residues for any $\theta$, and so does $\dom(\theta)$. 
Recall for $X=M$ or $\dom$, 
\[
\lim_{q\to+\infty}\left(\frac{B_X(q)}{{\textrm{Vol}(X)}^2}\right)^{1/q}
=\lim_{q\to+\infty}\left(\frac1{\textrm{Vol}(X\times X)}\iint_{X\times X}{|x-y|}^q\,dv(x)dv(y)\right)^{1/q}
=\sup_{x,y\in X}|x-y|
=\textrm{diam}(X).
\]
Now the proposition follows from Lemma \ref{lem_diam}. 
\end{proof}

On the other hand, in special cases the characterization problem has a positive solution. The author showed that 
if we restrict ourselves to the closed subsets and compact bodies in Euclidean spaces with suitable regularity, round balls can be characterized by the residues and round circles and $2$-spheres can be identified by the meromorphic energy function \cite{Onew}. 
The characterization of balls is a consequence of isoperimetric inequality since the first two residues are the volumes of $\dom$ and $\pO$. 
Here we show that round circles can be identified by the residues. 
\begin{proposition}
Let $X$ be either a compact body or a closed submanifold of a Euclidean space. 
Suppose $X$ is of class $C^5$ and the meromorphic energy function $B_X(z)$ has poles at $z=-1$ and $-3$ on $\R z\ge-3$ with $\Res_X(-1)=\Res_{S^1}(-1)$ and $\Res_X(-3)=\Res_{S^1}(-3)$. Then $X$ is isomorphic to a round unit circle $S^1$. 
\end{proposition}

We remark that we need one more regularity than in Theorem 3.4 of \cite{Onew} to guarantee the existence of the two residues. 
\begin{proof}
(i) From the location of the poles we know that $X$ is a union of closed curves $C_i$ $(1\le i\le \ell)$ for some $\ell\in\NN$. \\
\indent
(ii) Since $\Res_X(-1)=\Res_{S^1}(-1)$ 
\[
\sum_{i=1}^\ell L(C_i)=2\pi,
\]
where $L(C_i)$ is the length.  
\\
\indent
(iii) From $\Res_X(-3)=\Res_{S^1}(-3)$ we have 
\[\sum_{i=1}^\ell\int_{C_i}\kappa^2\,ds=2\pi. \]
Schwarz's inequality implies
\[
\left(\int_{C_i}\kappa\,ds\right)^2\le L(C_i)\cdot \int_{C_i}\kappa^2\,ds,
\]
where the equality holds if and only if $\kappa$ is constant, 
and Fenchel's inequality (\cite{Fe}, cf. \cite{Su}) implies 
\[
\int_{C_i}\kappa\,ds\ge2\pi,
\]
where the equality holds if and only if $C_i$ is a planar oval. 
From all the above equalities and inequalities we know that $\ell=1$ and $\kappa$ is constant, which completes the proof. 
\end{proof}
We remark that we do not have a proof that $S^2$ can be characterized by the residues for the moment since our proof in \cite{Onew} need information of the diameter.

\smallskip
The auther conjectures that odd dimensional round spheres can be identified by the energy $E_M(-2m)$ as well. 
Unfortunately we do not have any results for the characterization problem of compact Riemannian manifolds in terms of the meromorphic energy function or the residues (cf. \cite{GV}).

\subsection{Computation of volume of extrinsic balls by residues}
%
Suppose $M$ is an $m$ dimensional closed submanifold of $\RR^n$. 
Calculation of the volume of the extrinsic ball is generally very complicated, but it helps if we use two expressions of the residues 
as explained in Subsections \ref{subsubsect_extrinsic_balls} and \ref{method_graph}, 
one in terms of the coefficients of the series expansion of the volume of the extrinsic ball, and the other in terms of the derivatives of the function used to express the manifold as a graph. 
Karp and Pinsky \cite{KP} obtained the first two coefficients of the series expansion of the volume of a extrinsic ball \eqref{KP-expansion}. 
\begin{proposition}
Let $M$ be a closed surface in $\RR^3$. 
The third coefficient, which is the coefficient of $t^6$, of the series expansion of the volume of a extrinsic ball with radius $t$ can be expressed in terms of the second fundamental form $h$ as
\[
\frac{\pi}{9216}\left(
-9(h_{ii})^4
+36(h_{ij}h_{ij})^2
+64h_{ij;k}h_{ij;k}
-24(h_{ii})h_{jj;kk}
+72h_{ij}h_{ij;kk}
+24h_{ij}h_{kk;ij}
\right),
\] 
where $h_{ij;kl}=\partial_k\partial_lh_{ij}$. 
We remark that we used the Einstein summation convention.
\end{proposition}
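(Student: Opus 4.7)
My plan is to identify the coefficient $c_6$ of $t^6$ in the expansion $\psi_x(t)=c_2t^2+c_4t^4+c_6t^6+O(t^8)$ of $\psi_x(t)=\mbox{Vol}(M\cap B_t(x))$ with a multiple of the local residue $\Res_{M,x}^{\rm loc}(-6)$, and then compute the residue via the graph-representation method of Subsubsection \ref{method_graph}. By Lemma \ref{even_odd}(1), $\psi_x$ has no odd-order Taylor coefficients, and \eqref{RsubMrhoddtpsix} applied with $m=2$, $j=2$ gives $\Res_{M,x}^{\rm loc}(-6)=\psi_x^{(6)}(0)/5!=6c_6$. So the task reduces to computing $\Res_{M,x}^{\rm loc}(-6)$.

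Choose coordinates so that $x=0$, $T_xM=\RR^2$, and $M$ is locally the graph of a function $f\colon\RR^2\to\RR$ satisfying $f(0)=0$ and $\nabla f(0)=0$. By \eqref{local_residue_graph_FV}, $\Res_{M,x}^{\rm loc}(-6)=\int_{S^1}a_4(-6;w)\,dv(w)$, where $a_4(-6;w)$ is the $r^4$-coefficient in the Maclaurin expansion of $\xi(r,w,-6)=(1+f(rw)^2/r^2)^{-3}\sqrt{1+|\nabla f(rw)|^2}$. Substituting the Taylor expansion
\[ f(u)=\frac{1}{2}f_{ij}u_iu_j+\frac{1}{6}f_{ijk}u_iu_ju_k+\frac{1}{24}f_{ijkl}u_iu_ju_ku_l+O(|u|^5), \]
expanding $(1+f(rw)^2/r^2)^{-3}$ via the binomial series and $\sqrt{1+|\nabla f(rw)|^2}$ via the square-root series, and collecting the coefficient of $r^4$ in their product yields a polynomial in $w_1,w_2$ of degree at most $8$ whose coefficients are polynomial in $f_{ij}$, $f_{ijk}$, $f_{ijkl}$.

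Integrate over $S^1$ term by term using the moment formulas from \eqref{moment_integral_4_on_S3}, \eqref{moment_integral_6_on_S3}, and the analogues for eighth-order moments, namely $\int_{S^1}w_i^8\,dv=35\pi/64$, $\int_{S^1}w_i^6w_j^2\,dv=5\pi/64$, and $\int_{S^1}w_i^4w_j^4\,dv=3\pi/64$ for $i\neq j$. Dividing by $6$ then expresses $c_6$ as an explicit linear combination of scalar contractions built from $f_{ij}$, $f_{ijk}$, and $f_{ijkl}$.

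The final step is to rewrite everything in terms of the second fundamental form $h$ and its covariant derivatives. Since $g_{ij}(0)=\delta_{ij}$, $\Gamma^k_{ij}(0)=0$, and $\partial_kg_{ij}(0)=0$, one has $h_{ij}(0)=f_{ij}(0)$ and $h_{ij;k}(0)=f_{ijk}(0)$. The second covariant derivative requires more care: combining the identity $\partial_k\partial_lg_{ij}(0)=h_{ik}h_{jl}+h_{il}h_{jk}$ (which yields $(\partial_l\Gamma^a_{ki})(0)=h_{ik}h_{al}$) with the direct computation of $\partial_l\partial_k h_{ij}(0)$ from $h_{ij}=f_{ij}/\sqrt{g}$ gives
\[ h_{ij;kl}(0)=f_{ijkl}(0)-h_{ij}h_{ak}h_{al}-h_{ik}h_{aj}h_{al}-h_{jk}h_{ai}h_{al}. \]
Replacing $f_{ijkl}$ by the right-hand side and collecting terms produces the claimed formula. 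The main obstacle is the bookkeeping here: the cubic-in-$h$ corrections introduced by this substitution must combine with the purely quartic contributions from the $w^8$-moments in exactly the right way to reproduce the specific coefficients $-9$ and $36$ in the formula and, in particular, to cancel the quartic invariant $(h_{ii})^2(h_{jk}h_{jk})$ that would otherwise appear a priori as a third independent pure-$h$ quartic invariant.
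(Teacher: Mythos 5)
Your reduction $c_6=\tfrac16\Res_{M,x}^{\rm loc}(-6)$ via \eqref{RsubMrhoddtpsix} with $m=2$, $j=2$ is exactly the first step of the paper's proof. From there the paper does no computation at all: it simply quotes Fuller and Vemuri's Theorem 4.1 of \cite{FV} for the explicit expression of the local residue in terms of $h$. You instead propose to rederive that formula by the graph method of Subsubsection \ref{method_graph}; the scheme is the right one and your eighth-order moments on $S^1$ ($35\pi/64$, $5\pi/64$, $3\pi/64$) are correct, but as written the proof is not complete. The decisive content of the proposition is precisely the set of coefficients $-9,36,64,-24,72,24$ over $9216$, and your argument ends by asserting that the various contributions ``must combine in exactly the right way'' to produce them and to kill the third quartic invariant $(h_{ii})^2h_{jk}h_{jk}$ — that is the claim to be proved, not a step one may defer.

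There is also a concrete convention mismatch that would derail the bookkeeping if not fixed. The proposition defines $h_{ij;kl}=\partial_k\partial_lh_{ij}$, whereas you substitute the covariant Hessian
\[
h_{ij;kl}(0)=f_{ijkl}-h_{ij}h_{ak}h_{al}-h_{ik}h_{aj}h_{al}-h_{jk}h_{ai}h_{al},
\]
which is correct for the covariant derivative (your computation $\partial_l\Gamma^a_{ki}(0)=h_{ik}h_{al}$ checks out), but with the stated partial-derivative convention one has only
\[
\partial_k\partial_lh_{ij}(0)=f_{ijkl}-h_{ij}h_{ak}h_{al},
\]
since $h_{ij}=f_{ij}/\sqrt{1+|\nabla f|^2}$ and $\nabla f(0)=0$. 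The two choices differ by cubic-in-$h$ terms, hence shift the terms $-24(h_{ii})h_{jj;kk}+72h_{ij}h_{ij;kk}+24h_{ij}h_{kk;ij}$ by quartic-in-$h$ quantities that must be reabsorbed into $-9(h_{ii})^4+36(h_{ij}h_{ij})^2$; the same displayed formula cannot hold verbatim for both conventions. You must commit to the convention of the statement before the coefficient check can even be posed, and then actually carry the expansion of $\bigl(1+f(rw)^2/r^2\bigr)^{-3}\sqrt{1+|\nabla f(rw)|^2}$ through to the $r^4$ coefficient and integrate. Alternatively, the honest short proof is the paper's: cite \cite{FV}, Theorem 4.1.
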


\begin{proof}
\eqref{RsubMrhoddtpsix} implies that the local residue at $z=-6$ is equal to $6$ times the coefficient of $t^6$ of the series expansion of $\psi_x(t)$ which is the volume of a extrinsic ball with radius $t$ (cf. \eqref{psiMrhox}). 
Now the conclusion follows from Fuller and Vemuri's result (\cite{FV} Theorem 4.1). 
\end{proof}

\noindent
We remark that $h_{ij}$ is given by \eqref{preliminaries_gij_hij_H_m=4}.

\subsection{Appendix: \M invariant curvature energy of $4$-hypersurfaces}\label{appendix}

Let $M$ be a $4$-dimensional closed hypersurface in $\RR^5$. 
In this case ${|W|}^2$ and $X$ given by \eqref{Weyl_tensor} and \eqref{def_X} can be expressed by the principal curvatures $\ka_1,\dots,\ka_4$ as 
\begin{eqnarray}
{|W|}^2
&=&\displaystyle \frac1{6}\sum_{\{i,j,k,l\}=\{1,2,3,4\}}(\ka_{i}-\ka_{j})(\ka_{j}-\ka_{k})(\ka_{k}-\ka_{l})(\ka_{l}-\ka_{i}), \notag \\[1mm]
&=&\displaystyle \frac43\sum_{j<k}\ka_j^2\ka_k^2
-\frac43\sum_{j<k, i\ne j,k}
\ka_i^2\ka_j\ka_k+8\,\ka_1\ka_2\ka_3\ka_4 \label{Weyl_hypersurface}\\[1mm]
X&=& 6 \, \ka_1\ka_2\ka_3\ka_4. \notag
\end{eqnarray}

\begin{theorem}\label{thm_PCE}
Let $\sigma(\ka_1,\dots,\ka_4)$ be a symmetric polynomial of principal curvatures. 
Then 
$\displaystyle \int_M\sigma(\ka_1,\dots,\ka_4)\,dv$ is \M invariant if and only if $\sigma(\ka_1,\dots,\ka_4)$ is a linear combination of ${|W|}^2$, $X$ and 
\begin{equation}\label{PCE}
q(\ka_1,\dots,\ka_4)=\frac12\sum_{\{i,j,k,l\}=\{1,2,3,4\}} (\ka_i-\ka_j)^2(\ka_i-\ka_k)(\ka_i-\ka_l). 
\end{equation}
\end{theorem}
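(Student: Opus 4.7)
The plan is to reduce to the homogeneous case by scale invariance, verify the three candidates directly for the \emph{if} direction, and then cut down the ambient space by combining a pointwise classification with a test on a rich enough family of examples for the \emph{only if} direction.

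First, the dilation $x \mapsto c x$ is a Möbius transformation that sends $\kappa_i \mapsto c^{-1}\kappa_i$ and $dv \mapsto c^{4}\,dv$, so each homogeneous component of $\sigma$ must independently give a scale-invariant integral, forcing $\sigma$ to be homogeneous of degree $4$. The ambient space of degree-$4$ symmetric polynomials in four variables is then $5$-dimensional, with basis $\{p_4,\,p_1 p_3,\,p_2^{\,2},\,p_1^{\,2} p_2,\,p_1^{\,4}\}$ in terms of power sums. For the \emph{if} direction, $|W|^2\,dv$ is pointwise conformally invariant by the classical transformation law of the Weyl tensor; the Gauss--Bonnet--Chern theorem applied to the intrinsic metric of $M$, together with the Gauss equation identifying the Pfaffian with $\kappa_1\kappa_2\kappa_3\kappa_4$ up to a constant, gives $\int_M X\,dv = 8\pi^2 \chi(M)$; and for $q$, the key observation is that the $2\langle p,\nu\rangle$ shifts in \eqref{lem323} cancel in every pairwise difference, so $\tilde\kappa_i - \tilde\kappa_j = \mp |p|^2(\kappa_i - \kappa_j)$, and since $q$ is homogeneous of degree $4$ in these differences (with an even total number of factors, so signs cancel) we get $q(\tilde\kappa) = |p|^{8} q(\kappa)$, which together with $d\tilde v = |p|^{-8}\,dv$ yields pointwise Möbius invariance of $q\,dv$.

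For the \emph{only if} direction I would first classify the pointwise Möbius invariants. Since the inversion center $p_0$ can be chosen freely off $M$, the quantities $r = |x - p_0|$ and $c = 2\langle x - p_0,\nu_x\rangle$ can be varied independently, and pointwise invariance is equivalent to the polynomial identity $\sigma(r^{2}\kappa + c\mathbf{1}) = r^{8}\sigma(\kappa)$ for all real $r,c$. Given homogeneity of degree $4$, this reduces to translation invariance $\sigma(\kappa + c\mathbf{1}) = \sigma(\kappa)$. The degree-$4$ translation-invariant symmetric polynomials in four variables form a $2$-dimensional space, spanned for instance by $\bar p_2^{\,2}$ and $\bar p_4$, where $\bar p_k = \sum_i (\kappa_i - \bar\kappa)^k$ and $\bar\kappa = \frac{1}{4}\sum_j \kappa_j$; a direct expansion shows that $|W|^2$ and $q$ lie in this space and are linearly independent, hence span it.

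It remains to prove that in the $3$-dimensional complement transverse to the translation-invariants, only the line through $X$ gives a Möbius invariant integral. Note that $X$ is not translation-invariant, so this line is a genuinely new direction. To establish the upper bound on the dimension of the invariants, I would test the functional on the one-parameter family of hyper-spheroids $S_a$ and their inversive images $\widetilde S_a$, computing $\int_{S_a}\sigma\,dv$ and $\int_{\widetilde S_a}\sigma\,dv$ by the techniques of Lemma~\ref{a-hyper-spheroid-GW-residues} for a chosen basis of the complement; Möbius invariance forces these to agree as functions of $a$, and matching the expansions in $a$ about $a=1$ should yield two independent linear constraints on the three coefficients, cutting the complement down to the desired $1$-dimensional subspace. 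The main obstacle is verifying that the hyper-spheroid family, combined with the single inversion in the unit sphere at the origin, is rich enough to detect every non-invariant direction in the complement; if it is not, one must enlarge the test family (e.g., by varying the center of inversion, or by using tori $S^1\times S^3$ with variable radii) until all three coordinates in the complement are constrained.
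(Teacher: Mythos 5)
Your plan is correct and follows the same overall strategy as the paper: verify the three candidates directly for the ``if'' direction, reduce the ``only if'' direction to a three-dimensional space of remaining coefficients, and cut that space down to a line by testing on the hyper-spheroids $S_a$ and their inversive images. The one genuinely different ingredient is your intermediate classification of the \emph{pointwise} \M invariants: observing that (for a degree-$4$ homogeneous $\sigma$) pointwise invariance under \eqref{lem323} is equivalent to translation invariance $\sigma(\ka+c\mathbf{1})=\sigma(\ka)$, whose solution space is the $2$-dimensional span of $\bar p_2^{\,2}$ and $\bar p_4$, hence of ${|W|}^2$ and $q$. The paper does not state this; it simply checks the three candidates and then works in the complement spanned by $\sum\ka_i^4$, $\sum_{i\ne j}\ka_i^3\ka_j$, $\sum_{j<k}\ka_j^2\ka_k^2$ after eliminating the other two monomial types via ${|W|}^2$ and $X$. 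Your formulation explains conceptually why exactly ${|W|}^2$ and $q$ occur, which is a worthwhile addition. The step you flag as the main risk --- whether the spheroid family yields two independent constraints on the three remaining coefficients --- is exactly what the paper resolves: it evaluates the invariance identity \eqref{Mob_inv_CPE_hyper-spheroid} at $a=\sqrt2$ and $a=\sqrt3$ and obtains two explicit independent linear equations forcing $(c_1,c_2,c_3)=c(3,-4,6)$, so no enlargement of the test family is needed. Two small points to tighten: (i) scale invariance only forces each homogeneous component of degree $d\ne4$ to have identically vanishing \emph{integral}, so to conclude $\sigma$ is homogeneous of degree $4$ you should note (e.g.\ by evaluating on round spheres of varying radius and on the spheroids) that no nonzero symmetric polynomial of degree $d\ne 4$ integrates to zero on every hypersurface; (ii) in the invariance identity the sign ambiguity in \eqref{lem323} is harmless precisely because $\sigma$ has even degree, which you use implicitly for $q$ and should use uniformly.
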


\begin{proof}
``If'' part. As was stated in the proof of Proposition \ref{prop_conf_inv_RMminus8}, ${|W|}^2dv$ is a pointwise conformal invariant and $X$ is an integral topological invariant. 
\eqref{lem323} implies that $q(\ka_1,\dots,\ka_4)dv$ is \M invariant pointwise.

\smallskip
``Only if'' part. 
First note that a symmetric polynomial is generated by 
$\ka_i^4, \ka_i^3\ka_j, \ka_j^2\ka_k^2, \ka_i^2\ka_j\ka_k$ and $\ka_1\ka_2\ka_3\ka_4$. Since 
\[
\sum_{i\ne j<k\ne i}\ka_i^2\ka_j\ka_k=\sum_{j<k}\ka_j^2\ka_k^2
-\frac34{|W|}^2+X, \qquad \ka_1\ka_2\ka_3\ka_4=\frac16X, 
\]
we have only to consider a symmetric polynomial $\sigma$ of the form 
\begin{equation}\label{specail_p}
\sigma(\ka_1,\dots,\ka_4)=c_1\sum_i\ka_i^4+c_2\sum_{i\ne j}\ka_i^3\ka_j+c_3\sum_{j<k}\ka_j^2\ka_k^2. 
\end{equation}
Since 
\[\begin{array}{rcl}
q(\ka_1,\dots,\ka_4)&=&\displaystyle 3\sum_i\ka_i^4-4\sum_{i\ne j}\ka_i^3\ka_j+2\sum_{j<k}\ka_j^2\ka_k^2+4\sum_{i\ne j<k\ne i}\ka_i^2\ka_j\ka_k-24\ka_1\ka_2\ka_3\ka_4 \\[6mm]
&=&\displaystyle 3\sum_i\ka_i^4-4\sum_{i\ne j}\ka_i^3\ka_j+6\sum_{j<k}\ka_j^2\ka_k^2-3{|W|}^2.
\end{array}
\]
it is enough to show that if $\displaystyle \int_M\sigma(\ka_1,\dots,\ka_4)\,dv$, where $\sigma$ is given by \eqref{specail_p}, is \M invariant, then $(c_1,c_2,c_3)=c(3,-4,6)$ for some $c\in\RR$. 

\smallskip
Let us use an $a$-hyper-spheroid (cf. Definition \ref{def_hyper-spheroid}). 
Since \eqref{principal_curvatures_S_a} and \eqref{principal_curvatures_widetilde_S_a} imply that $\ka_i$ and $\tilde\ka_i$ depend only on $\theta_1$, if $\displaystyle \int_M\sigma(\ka_1,\dots,\ka_4)\,dv$ is \M invariant then by \eqref{vol_ele_spheroid} and \eqref{vol_element_widetilde_S_a} we have 
\begin{equation}\label{Mob_inv_CPE_hyper-spheroid}
\int_0^\pi 
\biggl[\sigma(\ka_1,\dots,\ka_4)-\frac{\sigma(\tilde\ka_1,\dots,\tilde\ka_4)}{{(a^2\cos^2\theta_1+\sin^2\theta_1)}^4}
\biggr]\sqrt{a^2\sin^2\theta_1+\cos^2\theta_1}\,\sin^3\theta_1\, d\theta_1=0.
\end{equation}
The above integrand is a linear combination of 
\[
\frac{\sin^3\theta_1\cos^j\theta_1}{{(\cos^2\theta_1+a^2\sin^2\theta_1)}^{9/2}{(a^2\cos^2\theta_1+\sin^2\theta_1)}^4} \qquad (j=0,2,4,\dots,12).
\]
Computation using Maple implies that all the antiderivatives of the above terms can be given by elementary functions including $\arctan$. 
Substituting $a=\sqrt2$ and $a=\sqrt3$ to \eqref{Mob_inv_CPE_hyper-spheroid} we have 
\[
\begin{array}{rccl}
\displaystyle -\frac{1}{35}\,(422\,c_1+726\,c_2+273\,c_3)&\displaystyle +\frac{8\sqrt3\,\pi}{81}\,(22\,c_1+39\,c_2+15\,c_3)&=&0, \\[4mm]
\displaystyle -\frac{1}{315}\,(619618c_1+1108164c_2+428967c_3)&\displaystyle +\frac{729\sqrt2\,\arctan\big(2\sqrt2\,\big)}{4}(6c_1+12c_2+5c_3)&=&0,
\end{array}
\]
which implies $(c_1,c_2,c_3)=c(3,-4,6)$. 
\end{proof}

\begin{definition}\label{def_Z(M)} \rm 
We define the {\em \M invariant principal curvature energy} of a $4$-dimensional hypersurface in $\RR^5$ by 
\begin{equation}\label{Z(M)}
Z(M)=\int_Mq(\ka_1,\dots,\ka_4)\,dv, 
\end{equation}
where $q(\ka_1,\dots,\ka_4)$ is given by \eqref{PCE}. 
\end{definition}

\begin{lemma}\label{lem_W_PCE}
\begin{enumerate}
\item $q(\ka_1,\dots,\ka_4)$ is non-negative for any $\ka_1,\dots,\ka_4$ and is equal to $0$ if and only if $\{\ka_1,\dots,\ka_4\}=\{\ka,\ka,\ka',\ka'\}$ for some $\ka$ and $\ka'$. 
Hence $Z(M)$ is non-negative. 
\item ${|W|}^2$ is non-negative for any $\ka_1,\dots,\ka_4$ and is equal to $0$ if and only if $\{\ka_1,\dots,\ka_4\}=\{\ka,\ka,\ka,\ka'\}$ for some $\ka$ and $\ka'$. 
\end{enumerate}
\end{lemma}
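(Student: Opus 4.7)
The plan is to rewrite each quantity as a manifestly non-negative algebraic expression in $\kappa_1,\dots,\kappa_4$, after which the equality case can be read off almost immediately.

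\textbf{Part (1).} First I would collapse the permutation sum defining $q$ by grouping on the ``distinguished'' index $i$. For fixed $i$, summing $(\kappa_i-\kappa_j)^2(\kappa_i-\kappa_k)(\kappa_i-\kappa_l)$ over the six orderings of $\{j,k,l\}=\{1,2,3,4\}\setminus\{i\}$ factors as
\[
2\,\Bigl(\prod_{m\neq i}(\kappa_i-\kappa_m)\Bigr)\sum_{j\neq i}(\kappa_i-\kappa_j) = 2(4\kappa_i-H)\prod_{m\neq i}(\kappa_i-\kappa_m),
\]
where $H=\sum_j\kappa_j$. Hence
\[
q=\sum_{i=1}^4 (4\kappa_i-H)\prod_{m\neq i}(\kappa_i-\kappa_m).
\]
Setting $\mu_i:=\kappa_i-H/4$ (so $\sum_i\mu_i=0$) and $p(x):=\prod_i(x-\mu_i)=x^4+e_2x^2-e_3x+e_4$, this becomes $q=4\sum_i\mu_i\,p'(\mu_i)$. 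Expanding using $p'(x)=4x^3+2e_2x-e_3$ and Newton's identities $p_2=-2e_2$, $p_4=2e_2^2-4e_4$ yields
\[
q \;=\; 16(e_2^2-4e_4) \;=\; 8(p_4-4e_4).
\]
If $e_4\leq0$, then $q=8p_4-32e_4\geq0$, with equality forcing all $\mu_i=0$. If $e_4>0$, AM--GM on $\mu_1^4,\dots,\mu_4^4$ gives $p_4\geq 4|\mu_1\mu_2\mu_3\mu_4|=4e_4$, with equality iff all $|\mu_i|$ coincide; combined with $\sum\mu_i=0$ this forces $\{\mu_i\}=\{c,c,-c,-c\}$ for some $c\in\RR$, equivalently $\{\kappa_i\}=\{\kappa,\kappa,\kappa',\kappa'\}$ (the all-equal case being $c=0$).

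\textbf{Part (2).} The central ingredient is the Plücker-type identity
\[
(\kappa_1-\kappa_2)(\kappa_3-\kappa_4)-(\kappa_1-\kappa_3)(\kappa_2-\kappa_4)+(\kappa_1-\kappa_4)(\kappa_2-\kappa_3)=0,
\]
verified by direct expansion. Writing $A:=(\kappa_1-\kappa_2)(\kappa_3-\kappa_4)$, $B:=(\kappa_1-\kappa_3)(\kappa_2-\kappa_4)$, $C:=(\kappa_1-\kappa_4)(\kappa_2-\kappa_3)$, this reads $A-B+C=0$. The cyclic expression $(\kappa_i-\kappa_j)(\kappa_j-\kappa_k)(\kappa_k-\kappa_l)(\kappa_l-\kappa_i)$ is invariant under the dihedral group of order $8$ acting on $(i,j,k,l)$, so the $24$ permutations in the definition of $|W|^2$ partition into three orbits of size $8$. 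Taking representatives $(1,2,3,4)$, $(1,2,4,3)$, $(1,3,2,4)$ and unwinding signs, the three cyclic products equal $-AC$, $AB$, $BC$ respectively. Therefore
\[
|W|^2 \;=\; \frac{8}{6}(-AC+AB+BC) \;=\; \frac{4}{3}(A^2+AC+C^2) \;=\; \frac{2}{3}(A^2+B^2+C^2),
\]
where the last two equalities use $B=A+C$. This sum of squares is manifestly non-negative, and vanishes iff $A=B=C=0$. A short case-check completes the equality analysis: $A=0$ gives $\kappa_1=\kappa_2$ or $\kappa_3=\kappa_4$, and $B=0$ then forces a triple coincidence among the $\kappa_i$, after which $C=0$ is automatic. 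Conversely, if three of the $\kappa_i$ agree then $A=B=C=0$ trivially. Hence $|W|^2=0$ iff $\{\kappa_i\}=\{\kappa,\kappa,\kappa,\kappa'\}$.

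The arithmetic in both parts is elementary once the right reformulation is in hand; the real content is choosing that reformulation. In (1) the decoupling appears only after the traceless shift $\mu_i=\kappa_i-H/4$, which separates the outer factor $4\kappa_i-H$ from the root content encoded by $p'(\mu_i)$. In (2) it is the Plücker relation $A-B+C=0$ that collapses the three cyclic products into a sum of squares. I expect identifying these two reformulations to be the only substantive step; everything else is bookkeeping.
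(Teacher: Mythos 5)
Your proof is correct, but it takes a genuinely different route from the paper's. The paper's own argument for (1) normalizes, via translation/scaling invariance and symmetry, to the two-variable function $F(x,y)=q(0,x,y,1)$ on $[0,1]^2$ and locates the global minimum by inspecting the boundary values and the critical points of $F$; part (2) is then asserted to follow ``by the same argument'' from the expansion of ${|W|}^2$ in symmetric functions. You instead produce closed-form algebraic identities. For (1), the collapse of the permutation sum to $q=\sum_i(4\kappa_i-H)\prod_{m\ne i}(\kappa_i-\kappa_m)$ and the evaluation $q=16(e_2^2-4e_4)=8(p_4-4e_4)$ in the traceless variables $\mu_i=\kappa_i-H/4$ are correct (they check out against the paper's explicit expansion of $q$ on test values such as $(1,0,0,0)$ and $(1,1,0,0)$), and AM--GM on $\mu_1^4,\dots,\mu_4^4$ then delivers both the sign and the equality case in one stroke. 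For (2), the Pl\"ucker relation $A-B+C=0$ turns ${|W|}^2$ into the manifest sum of squares $\tfrac23(A^2+B^2+C^2)$, and the equality case reduces to the elementary implication $A=B=C=0\Rightarrow$ three of the $\kappa_i$ coincide. What your approach buys is more information and a cleaner equality analysis: explicit symmetric-function and sum-of-squares formulas rather than a critical-point search, and a genuine proof of (2), which the paper only sketches. What the paper's approach buys is that it needs no algebraic insight, only routine calculus. One minor simplification: the case split on the sign of $e_4$ is unnecessary, since $p_4\ge 4|e_4|\ge 4e_4$ always; as written, though, the split is harmless and your equality analysis goes through.
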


\begin{proof}
(1) Assume $\ka_1,\dots,\ka_4$ are not identical. 
It is enough to consider $q(0,x,y,1)$ for $0\le x,y\le 1$. 
Put $F(x,y)=q(0,x,y,1)$. 
Calculating the values of $F$ on the boundary $\partial [0,1]^2$, and using 
\[
\pd{F}{x}-\pd{F}{y}=16(x-y)\left(\left(x-\frac12\right)^2+\left(y-\frac12\right)^2-\frac12\right), \quad \pd{F}{x}(x,x)=8x-4,
\]
we conclude that $F$ has critical points only at $(1/2,1/2)$ which is a saddle point where $F$ takes $1$ and at $(0,1)$ and$(1,0)$ which are local minimum points where $F$ takes $0$, which turns out to be the global minimum. 

\smallskip
(2) It can be shown similarly from \eqref{Weyl_hypersurface}. 
\end{proof}

\begin{corollary}\label{cor_pos_W_PCE}
Suppose $M$ is a $4$-dimensional connected closed hypersurface in $\RR^5$. 
If $\a, \beta>0$ then 
\[
\a\int_M{|W|}^2\,dv+\beta Z(M)\ge 0
\]
and the equality holds if and only if $M$ is a sphere. 
\end{corollary}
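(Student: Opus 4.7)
The plan is to deduce the corollary directly from Lemma \ref{lem_W_PCE} together with the classical fact that a connected compact totally umbilic hypersurface in $\RR^5$ is a round sphere. Non-negativity is immediate: since $\a,\beta>0$ and Lemma \ref{lem_W_PCE} gives $|W|^2\ge0$ and $q(\ka_1,\dots,\ka_4)\ge0$ pointwise, the integrand of $\a\int_M|W|^2\,dv+\beta Z(M)$ is a non-negative continuous function on $M$, so the integral is non-negative.

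For the equality statement, if $\a\int_M|W|^2\,dv+\beta Z(M)=0$ then, since both terms are non-negative, each must vanish. By continuity of the principal curvatures (which I would note follows from the smoothness assumption on $M$ made at the start of the paper) and the fact that $|W|^2$ and $q$ are continuous non-negative functions, this forces $|W|^2(x)=0$ and $q(\ka_1(x),\dots,\ka_4(x))=0$ at every point $x\in M$. By Lemma \ref{lem_W_PCE}(2), at each $x$ the multiset $\{\ka_1,\dots,\ka_4\}$ is of the form $\{\ka,\ka,\ka,\ka'\}$, while by Lemma \ref{lem_W_PCE}(1) it is of the form $\{\mu,\mu,\mu',\mu'\}$. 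The only common shape is $\ka=\ka'$, so all four principal curvatures coincide at $x$, i.e.\ $M$ is totally umbilic.

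Then I would invoke the classical fact that a connected compact totally umbilic smooth hypersurface in Euclidean space is a round sphere (the umbilic condition together with the Codazzi equation implies the common principal curvature is locally constant, hence constant on the connected manifold $M$, and a compact hypersurface of constant non-zero principal curvature in $\RR^5$ is a sphere). Conversely, on a round sphere $S^4_r\subset\RR^5$ all four principal curvatures equal $-1/r$ (with the outward normal convention of Remark \ref{remark_convention_orientation_normal}), so both $|W|^2$ and $q(\ka_1,\dots,\ka_4)$ vanish identically, giving $\a\int_M|W|^2\,dv+\beta Z(M)=0$.

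There is essentially no obstacle: every non-trivial ingredient has already been established, either in Lemma \ref{lem_W_PCE} or as a textbook result. The only mildly delicate point is the step from "integrand vanishes almost everywhere" to "all principal curvatures coincide at every point," which I would handle by pointing to the continuity of $\ka_i$ (up to reordering) and the non-negativity of the integrand.
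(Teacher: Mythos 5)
Your proof is correct and follows essentially the same route as the paper's: both deduce pointwise vanishing of $|W|^2$ and $q$ from non-negativity, intersect the two zero-set descriptions in Lemma \ref{lem_W_PCE} to conclude $M$ is totally umbilical, and then invoke the classical fact that a connected compact totally umbilical hypersurface is a round sphere. You merely spell out the steps the paper leaves implicit.
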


\begin{proof}
Suppose the equality holds. 
Lemma \ref{lem_W_PCE} shows that $M$ is totally umbilical. Since $M$ is closed and connected, $M$ is a sphere. 
\end{proof}

Jun O'Hara

Department of Mathematics and Informatics,Faculty of Science, 
Chiba University

1-33 Yayoi-cho, Inage, Chiba, 263-8522, JAPAN.  

E-mail: ohara@math.s.chiba-u.ac.jp

\end{document}